\newcommand{\rarrow}[1]{{\buildrel #1 \over \longrightarrow}}
\def\Ker{{\rm Ker }}
\def\Z{{\mathbf Z}}
\def\H{{\mathbf H}}
\def\P{{\rm P}}
\renewenvironment{thebibliography}[1]
      {\begin{center}\normalfont\Large\bfseries\refname\end{center}%
       \@mkboth{\MakeUppercase\refname}{\MakeUppercase\refname}%
       \list{\@biblabel{\@arabic\c@enumiv}}%
            {\settowidth\labelwidth{\@biblabel{#1}}%
             \leftmargin\labelwidth
             \advance\leftmargin\labelsep
             \@openbib@code
             \usecounter{enumiv}%
             \let\p@enumiv\@empty
             \renewcommand\theenumiv{\@arabic\c@enumiv}}%
       \sloppy
       \clubpenalty4000
       \@clubpenalty \clubpenalty
       \widowpenalty4000%
       \sfcode`\.\@m}
      {\def\@noitemerr
        {\@latex@warning{Empty `thebibliography' environment}}%
       \endlist}
\newtheorem{thm}{Theorem}[section]
\newtheorem{thm0}[thm]{Theorem}
\newtheorem{cor}[thm]{Corollary}
\newtheorem{prop}[thm]{Proposition}
\newtheorem{lem}[thm]{Lemma}
\newtheorem{conj}[thm]{Conjecture}
\newenvironment{proof}{\noindent{\bf Proof.}}
{\noindent \ $\Box$ \bigskip}
\begin{document}

\title{RELATIONS IN THE $24$-TH HOMOTOPY GROUPS OF SPHERES}
\author{\textsc{Toshiyuki} MIYAUCHI and \textsc{Juno} MUKAI}
\date{}


\maketitle
\begin{abstract}
The main purpose of this note is to give a proof of the fact that the Toda brackets \ $\langle\bar{\nu},\sigma,\bar{\nu}\rangle$ and $\langle\nu,\eta, \bar{\sigma}\rangle$ are not trivial. This is an affirmative answer of the second author's Conjecture (Determination of the $P$-image by Toda brackets, Geometry and Topology Monographs \textbf{13}(2008), 355--383). The second purpose is to show the relation $\bar{\nu}_7\omega_{15}=\nu_7\sigma_{10}\kappa_{17}$ in $\pi^7_{31}$.
\end{abstract}
\section{Introduction}
Throughout this note, we work in the $2$-primary components of homotopy groups
of spheres. In the stable group, let \ $\iota\in\pi^s_0, \ \eta\in\pi^s_1,\ \nu\in\pi^s_3,\ \sigma\in\pi^s_7,\ 
\bar{\nu},\ \varepsilon\in\pi^s_8,\ \mu\in\pi^s_9,\ \zeta\in\pi^s_{11},\ \kappa\in\pi^s_{14},\ \rho\in\pi^s_{15},\ \omega,\ \eta^*\in\pi^s_{16},\ \bar{\mu}\in\pi^s_{17},\ \nu^*,\ \xi\in\pi^s_{18},\ \bar{\zeta},\ \bar{\sigma}\in\pi^s_{19},\ \bar{\kappa}\in\pi^s_{20},\ \bar{\rho}\in\pi^s_{23},\ \delta\in\pi^s_{24},\ \mu_3\in\pi^s_{25}$\ be the generators \cite{T, MMO}. 

We know the following \cite{MMO}: \

$$
\pi^s_{24}= \Z_2\{\bar{\mu}\sigma\}
\oplus\Z_2\{\eta\eta^*\sigma\}.
$$ 

The main purpose of this note is to give a proof of the fact that the Toda brackets \ $\langle\bar{\nu},\sigma,\bar{\nu}\rangle$ and $\langle\nu,\eta, \bar{\sigma}\rangle$ are not trivial. 

\begin{thm0} \label{thm}\label{main}
$\eta_9\sigma_{10}\eta^*_{17}=\{\varepsilon_9,\sigma_{17},\eta_{24}\sigma_{25}\}_4$,\ 
$\{\bar{\nu}_{20},\sigma_{28}, \bar{\nu}_{35}\}=\{\nu_{20},\eta_{23}, \bar{\sigma}_{24}\}=\eta_{20}\sigma_{21}\eta^*_{28}=\eta_{20}\eta^*_{21}\sigma_{37}$ \ and \
$\langle\bar{\nu},\sigma, \bar{\nu}\rangle=\langle\nu,\eta, \bar{\sigma}\rangle=\eta\eta^*\sigma$. 
\end{thm0}

This result gives an affirmative answer to \cite[Conjecture 4.8]{Mu2}. 
In the proof of Theorem \ref{main}, our method is to inspect relations in homotopy groups of spheres through those in homotopy groups of rotation groups. 

We know the following \cite[Theorem 12.22]{T}:
$$
\pi^{13}_{31}=\Z_8\{\xi_{13}\}\oplus\Z_8\{\lambda\}\oplus\Z_2\{\eta_{13}\bar{\mu}_{14}\}.
$$ 

Let $P: \pi^{13}_{32}\to\pi^6_{30}$ be the $P$-homomorphism\ ($P=\Delta$ in \cite{T}). We need 
\begin{lem}\label{HPxl13}
$H(P\xi_{13})\equiv\xi'\ \bmod\ 2\lambda',2\xi'$\ and\  
$H(P\lambda)\equiv\lambda' \bmod 2\lambda',2\xi'$. 
\end{lem}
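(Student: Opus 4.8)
The plan is to compute the two Hopf invariants by chasing the EHP exact sequences that link $S^{13}$, $S^{6}$ and $S^{11}$, using the additive structure of the relevant (unstable) homotopy groups from \cite{T} together with the standard composition rules for $E$, $H$ and $P=\Delta$. From \cite[Ch.~12]{T} one first assembles the input: $\pi^{13}_{31}$ as displayed above, with $\xi_{13},\lambda$ generating its $\Z_{8}\oplus\Z_{8}$ summand; the group $\pi^{6}_{29}$, in which the classes $P\xi_{13}$ and $P\lambda$ lie; and the group $\pi^{11}_{29}$ receiving $H$, in which $\xi'$ and $\lambda'$ are the order-$8$ generators linked to $\xi_{13},\lambda$ by double suspension. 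All these groups are well outside the stable range, so the group structures and the behaviour of $E$ and $E^{2}$ on them must be quoted from Toda's tables. Since $H$ and $P$ are homomorphisms, $H(P\xi_{13})$ and $H(P\lambda)$ have no intrinsic ambiguity, so the stated congruences modulo $\langle 2\xi',2\lambda'\rangle$ record precisely the accuracy the chase below achieves, which is all the sequel needs.

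Next I would locate $P\xi_{13}$ and $P\lambda$ inside $\pi^{6}_{29}$. By exactness of $\pi^{13}_{31}\rarrow{P}\pi^{6}_{29}\rarrow{E}\pi^{7}_{30}$ they lie in the kernel of the suspension; to identify them concretely I would write $\xi_{13}$ and $\lambda$, following Toda's descriptions, as composites of the form $\alpha\circ E^{2}\beta$ and apply the rule $P(\alpha\circ E^{2}\beta)=(P\alpha)\circ\beta$ together with the tabulated values of $P$ on the generators of the relevant $\pi^{13}_{*}$; any slack in these expressions that lies in $\Im E$ is killed by the subsequent $H$, since $H\circ E=0$. Then I would apply $H$, using its description on $\pi^{6}_{29}$ from \cite{T} and the Hilton-type formula for the Hopf invariant of a composite, and identify the images in $\pi^{11}_{29}$ as the asserted odd multiples of $\xi'$, respectively $\lambda'$, modulo $\langle 2\xi',2\lambda'\rangle$.

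The main obstacle is this last step. The case of the identity, $H(P\iota_{13})=\pm 2\iota_{11}$ (which follows from $P\iota_{13}=\pm[\iota_{6},\iota_{6}]$ and $H[\iota_{6},\iota_{6}]=2\iota_{11}$), would predict an even answer, so the real content of the lemma is that the unstable correction terms in the $P$- and $H$-composition formulas add a further odd multiple of the relevant generator. Far below the stable range $\pi^{6}_{29}$ and $\pi^{11}_{29}$ are large and those formulas are genuinely inexact, so the delicate task is to pin the corrections down finely enough to read off their leading terms and to verify that the remaining uncertainty — in particular the contribution of the $\eta$-multiple $\eta_{13}\bar{\mu}_{14}$ and of the other summands of $\pi^{11}_{29}$ — all lies in $\langle 2\xi',2\lambda'\rangle$; this is exactly what forces, and is recorded by, the indeterminacy in the statement. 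Carried out systematically with Toda's tables for these stems, the chase should close.
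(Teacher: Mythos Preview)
Your proposed strategy has a structural gap at the step ``write $\xi_{13}$ and $\lambda$, following Toda's descriptions, as composites of the form $\alpha\circ E^{2}\beta$ and apply the rule $P(\alpha\circ E^{2}\beta)=(P\alpha)\circ\beta$.'' Neither $\xi_{13}$ nor $\lambda$ is a composite in Toda's book: $\xi_{12}$ is \emph{defined} as a representative of the Toda bracket $\{\sigma_{12},\nu_{19},\sigma_{22}\}_1$, and $\lambda$ is likewise a secondary-composition class. The composition rule for $P$ therefore gives no purchase, and the ``Hilton-type formula for $H$ of a composite'' you plan to invoke is equally inapplicable. This is not a matter of slack in $\Im E$ that $H$ would kill; there is simply no composite description to feed into those formulas, and this is precisely why your heuristic based on $H(P\iota_{13})=\pm2\iota_{11}$ predicts the wrong parity.

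What the paper actually does is replace the composition rule by its secondary analogue: a lemma (Lemma~\ref{Miy}) asserting $P\{\alpha,E^{k}\beta,E^{k}\gamma\}_{k}\subset\{P\alpha,E^{k-2}\beta,E^{k-2}\gamma\}_{k-2}$. Applied to $\xi_{13}\in\{\sigma_{13},\nu_{20},\sigma_{23}\}_3$ this yields $P\xi_{13}\in\{P\sigma_{13},\nu_{18},\sigma_{21}\}_1$; since $HP\sigma_{13}=2\sigma_{11}$, one then lands (via \cite[Proposition 2.3]{T}) in $\{2\sigma_{11},\nu_{18},\sigma_{21}\}_1$, which a separate computation (Lemma~\ref{usgnep}(1)) identifies as $\xi'$ modulo $2\lambda',2\xi'$. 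The $\lambda$ case needs an additional idea: $\lambda$ itself has no convenient bracket description, so the paper first proves $\lambda\eta_{31}=E\omega'=\{\sigma_{13},\nu_{20},\varepsilon_{23}\}$ (this requires the nontrivial Lemmas~\ref{omg'sg} and \ref{omg'}), runs the same $P$-of-bracket argument to get $HP(\lambda\eta_{31})=\lambda'\eta_{29}$, and then combines this with the coarser estimate $H(P\lambda)\equiv\pm\lambda'\bmod 2\lambda',2\xi',\eta_{11}\bar{\mu}_{12}$ already in \cite[(3.3)]{MMO} to pin down the sign and eliminate $\eta_{11}\bar{\mu}_{12}$. The indeterminacy $\langle 2\lambda',2\xi'\rangle$ in the statement thus records the Toda-bracket indeterminacy, not inexactness of composition formulas.
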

Notice that Lemma \ref{HPxl13} improves \cite[(3.3)]{MMO}.

Oda \cite[Proposition 2.6 (5)]{Od} obtained the following relation in $\pi^7_{31}$: 
$$
\bar{\nu}_7\omega_{15}\equiv 0\ \bmod\ \nu_7\sigma_{10}\kappa_{17},  \bar{\zeta}'_7,
$$ 
where $\bar{\zeta}'_7=\sigma'\varepsilon_{14}\mu_{22}$  \cite[(5.10)]{MMO}.
The second purpose of this note is to show
\begin{thm0} \label{thm}\label{main2}
$\bar{\nu}_6\omega_{14}\equiv\nu_6\sigma_9\kappa_{16}+P(\xi_{13}+\lambda)\circ\eta_{29}\ \bmod\ 4\bar{\zeta}'_6$\ and\\ $\bar{\nu}_7\omega_{15}=\nu_7\sigma_{10}\kappa_{17}$.
\end{thm0}

The authors wish to thank Professor Nobuyuki Oda for 
kind advices during the preparation of the manuscript.

\section{Recollection of some relations in homotopy groups of spheres}
We use the result,
the notation of \cite{T} and the properties of Toda brackets freely. We know 
\begin{equation}\label{n0}
[\iota_2,\iota_2]=2\eta_2,
\end{equation}
\begin{equation}\label{n1}
\pm[\iota_4,\iota_4]=2\nu_4-E\nu',
\end{equation}
\begin{equation}\label{n2}
\eta_4\nu_5=(E\nu')\eta_7=[\iota_4,\eta_4],\ \eta_5\nu_6=0, 
\end{equation}
\begin{equation}\label{n3}
\nu_5\eta_8=[\iota_5,\iota_5],
\end{equation} 
\begin{equation}\label{n4}
2\sigma_8-E\sigma'=\pm[\iota_8,\iota_8]
\end{equation} 
and  \cite[Lemma 6.3]{T}
\begin{equation}\label{etbn}
\eta_5\bar{\nu}_6=\nu^3_5 \text{ and } 
\bar{\nu}_6\eta_{14}=\nu^3_6. 
\end{equation} 
We also know that
\begin{equation}\label{et7sg}
\eta_7\sigma_8=\bar{\nu}_7+\varepsilon_7+\sigma'\eta_{14}
\text{\ \ \cite[(7.4)]{T}},
\end{equation}
\begin{equation}\label{et9sg}
\eta_9\sigma_{10}=\bar{\nu}_9+\varepsilon_9 
\text{\ \ \cite[Lemma 6.4]{T}},
\end{equation}
 and $\eta_9\sigma_{10}+\sigma_9\eta_{16}
=[\iota_9,\iota_9]$ \cite[(7.1)]{T}.
We recall from \cite[(7.19)]{T} the relation 
\begin{equation}\label{sg'nu}
\sigma'\nu_{14}=x\nu_7\sigma_{10}\ (x: \mbox{odd}).
\end{equation}
By (\ref{sg'nu}) and \cite[Lemma 5.14, (7.21)]{T}, we have
\begin{equation}\label{nu10sg}
\nu_{10}\sigma_{13}=2\sigma_{10}\nu_{17}=[\iota_{10},\eta_{10}]
\end{equation}
and
\begin{equation}\label{sgm11n}
\sigma_{11}\nu_{18}=[\iota_{11},\iota_{11}].
\end{equation}
By \cite[(7.18), (7.22)]{T}, we have relations
$$
\nu_6\varepsilon_9=(2\bar{\nu}_6)\nu_{14}=[\iota_6,\nu^2_6]
$$
and
\begin{equation}\label{bnun}
\bar{\nu}_9\nu_{17}=[\iota_9,\nu_9].
\end{equation}

We recall the relations:
$$
4\zeta_5=\eta^2_5\mu_7,\text{ and } 4\bar{\zeta}_5=\eta^2_5\bar{\mu}_7
\text{\ \ \cite[(7,14), Lemma 12.4]{T}},
$$
\begin{equation}\label{et4zt}
\eta_4\zeta_5\equiv\ (E\nu')\mu_7\ \bmod\ (E\nu')\eta_7\varepsilon_8
\text{\ \ \cite[Proposition 2.2(5)]{Og}},
\end{equation}
\begin{equation}\label{bnu6sg} 
\varepsilon_3\sigma_{11}=0 \text{ and }
\bar{\nu}_6\sigma_{14}=0.
\text{\ \ \cite[Lemma 10.7]{T}},
\end{equation}
$$
[\iota_{17},\iota_{17}]\equiv\eta^*_{17}+\omega_{17}\ \bmod\ \sigma_{17}\mu_{24}
\text{\ \ \cite[Proposition 12.20.ii)]{T}},
$$
\begin{equation}\label{n7k}
\nu_7\kappa_{10}=\kappa_7\nu_{21}
\text{\ \ \cite[Proposition 2.13(2)]{Og}},
\end{equation}
\begin{equation}\label{n*19x}
[\iota_{19},\iota_{19}]=\nu^*_{19}+\xi_{19}
\text{\ \ \cite[Corollary 12.25]{T}},
\end{equation}
\begin{equation}\label{n9xi}
\nu_9\xi_{12}=\sigma^3_9 \text{ and }  \xi_{12}\nu_{30}=\sigma^3_{12}
\text{\ \ \cite[II-Proposition 2.1(2)]{Od}},
\end{equation}
\begin{equation}\label{bep}
\eta_6\kappa_7=\bar{\varepsilon}_6,\ \kappa_9\eta_{23}=\bar{\varepsilon}_9
\text{\ \ \cite[(10.23)]{T}},
\end{equation}
\begin{equation}\label{epep}
\nu_5\sigma_8\nu^2_{15}=\eta_5\bar{\varepsilon}_6,\ \varepsilon^2_3
 =\varepsilon_3\bar{\nu}_{11}=\eta_3\bar{\varepsilon}_4=\bar{\varepsilon}_3\eta_{18}
\text{\ \ \cite[Lemma 12.10]{T}},
\end{equation}
$$
\mu_3\varepsilon_{12}\equiv\eta_3\mu_4\sigma_{13}\ \bmod\ 2\bar{\varepsilon}'
\text{\ \ \cite[Proposition 2.13(7)]{Og}},
$$
and
\begin{equation}\label{mepbn2}
\mu_5\bar{\nu}_{14}=0 \text{ and } \bar{\nu}_6\mu_{14}=0
\text{\ \ \cite[Proposition 2.13(8)]{Og}}.
\end{equation}

By  \cite[Lemma 9.2]{T} and the relation $8\sigma^2_{14}=0$, we have 
\begin{equation}\label{nuzt}
\nu_7\zeta_{10}=(E^2\sigma''')\sigma_{14}\ \mbox{and} \ \nu_{14}\zeta_{17}=0.
\end{equation}

By \cite[Proposition 2.2(6)]{Og} and \cite[(7.25)]{T}, we have
\begin{equation}\label{nu9m}
\zeta_6\eta_{17}=\nu_6\mu_9=8([\iota_6,\iota_6]\sigma_{11}).
\end{equation}


By the relations \eqref{et9sg}
and $\bar{\nu}_9\sigma_{17} = \varepsilon_9\sigma_{17} =0$ \cite[Lemma 10.7]{T},
 we have
\begin{equation}\label{et9sg2}
\eta_9\sigma^2_{10}= (\bar{\nu}_9+\varepsilon_9)\sigma_{17}
	=\bar{\nu}_9\sigma_{17} + \varepsilon_9\sigma_{17} =0. 
\end{equation}

We recall from \cite[(10.18), (10.20)]{T} that 
$\sigma_9(\bar{\nu}_{16}+\varepsilon_{16})=[\iota_9,\sigma_9]$ , 
$\sigma_{10}\bar{\nu}_{17}=\sigma_{10}\varepsilon_{17}=[\iota_{10},\nu^2_{10}]$
and 
\begin{equation}\label{n5}
\sigma_{11}\bar{\nu}_{18}=\sigma_{11}\varepsilon_{18}=0.
\end{equation}

By (\ref{epep}) and (\ref{sgm11n}), we have
\begin{equation}\label{n6}
\eta_9\bar{\varepsilon}_{10}=0.
\end{equation}

We recall the elements $\lambda'$ and $\xi'$ in $\pi^{11}_{29}$ from
\cite[Lemma 12.19]{T} and \cite[Proposition 4(3)]{Od1}: 
\begin{equation}\label{2lambda}
E^2\lambda'=2\lambda,\ %
H(\lambda')=\varepsilon_{21}
\end{equation}
and
\begin{equation}\label{2xi}
E^2\xi'=2\xi_{13},\ %
H(\xi')= \bar{\nu}_{21}+\varepsilon_{21}=\eta_{21}\sigma_{22}.
\end{equation}

By \eqref{2lambda}, \eqref{2xi}
and the proof of \cite[Proposition 2.20(6)]{Og}, 
we obtain \begin{equation}\label{n11om}
\nu_{11}\omega_{14}=(\lambda'+\xi')\eta_{29}\text{\ and\ } 
\nu_{13}\omega_{16}=(2\lambda+2\xi_{13})\eta_{31}=0.
\end{equation}

We also recall \cite[(12.23)]{T}
\begin{equation}\label{n7}
4\sigma_{10}\zeta_{17}=2\sigma_{11}\zeta_{18}
 =\sigma_{13}\zeta_{20}=\zeta_{13}\sigma_{24}=0.
\end{equation}

By  (\ref{n7}), \cite[Theorem 12.8, (12.25), p. 166]{T} and using the EHP sequence, we obtain 
\begin{equation}\label{sgm12zt}
4\sigma_9\zeta_{16}=[\iota_9,\eta_9\mu_{10}],\ 2\sigma_{10}\zeta_{17}=[\iota_{10},\mu_{10}],\ \sigma_{12}\zeta_{19}=8[\iota_{12},\sigma_{12}].
\end{equation}
 
By \cite[Proposition 2.17(7)]{Og} and (\ref{n7}), we have
\begin{equation}\label{nu11ro} 
\nu_{10}\rho_{13}\equiv 0 \bmod 2\sigma_{10}\zeta_{17}\ \ \text{and}
\ \ \nu_{11}\rho_{14}=0. 
\end{equation}

By the relations 
$\bar{\zeta}_5\eta_{24}\equiv \zeta_5\mu_{16}\equiv \nu_5\bar{\mu}_8\
	\bmod\ \nu_5\eta_8\mu_9\sigma_{18}$
\cite[II-Proposition 2.2(1);(2)]{Od}
 and
$\nu_6\bar{\mu}_9=16(P\rho_{13})$
\cite[(16.6)]{MT}, we have
$$
\bar{\zeta}_6\eta_{25}=\zeta_6\mu_{17}=\nu_6\bar{\mu}_9=16(P\rho_{13}).
$$

We recall from \cite[Theorem 12.6, (12.4)]{T} that :
$$
\pi^7_{23}=\{\sigma'\mu_{14}, \sigma'\eta_{14}\varepsilon_{15},
\mu_7\sigma_{16}, \eta_7\bar{\varepsilon}_8\} 
\ (E\zeta'=\sigma'\eta_{14}\varepsilon_{15}).
$$
By the relations $E^2\sigma'=2\sigma_9$ (\ref{n4}), $2\eta_n=0$ (\ref{n0}) and $2\mu_n=0$ for $n\ge 3$ \cite[Lemma 6.5]{T}, we have $E^{\infty}(\sigma'\mu_{14})=0$ and
$E^{\infty}(\sigma'\eta_{14}\varepsilon_{15})=0$.
We know $E^{\infty}(\eta_7\bar{\varepsilon}_8)=0$ (\ref{n6}) and  
$E^{\infty}(\mu_7\sigma_{16})\ne 0$ \cite[Theorem 12.16]{T}.
Hence we have 
\[
E^\infty\pi^7_{23}=\{\sigma\mu\}.
\]
We observe that  
$\langle\zeta,\eta,\nu\rangle\circ\eta
	=\zeta\circ\langle\eta,\nu,\eta\rangle=\zeta\nu^2=\zeta\nu\circ \nu=0$ (\ref{nuzt}). 
Since $\pi^s_{16}=\{\sigma\mu, \eta^*\}\cong(\Z_2)^2$ and ${(\eta)}^*: \pi^s_{16}\to\pi^s_{17}$ is a monomorphism by \cite[Theorem 12.16, 12.17]{T},
we have $\langle\zeta,\eta,\nu\rangle=0$. 
Hence we obtain the relation
\begin{equation}\label{tze7tet}
\mu_7\sigma_{16}\not\in\{\zeta_7,\eta_{18},\nu_{19}\}.
\end{equation}

Denote by ${\rm Ind}\{ \ , \ ,\  \}_k$ the indeterminacy of the Toda bracket. 
We show 
\begin{lem}\label{zetep}
$\zeta_5\bar{\nu}_{16}=0$\ and\  $\zeta_5\varepsilon_{16}=\zeta_5\eta_{16}\sigma_{17}=
\nu_5\mu_8\sigma_{17}$.
\end{lem}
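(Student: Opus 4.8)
The plan is to split the three equalities into two independent claims. Suspending \eqref{et9sg} into the stable range gives $\eta_{16}\sigma_{17}=\bar{\nu}_{16}+\varepsilon_{16}$ in $\pi^{16}_{24}$, so composing with $\zeta_5$ on the left yields $\zeta_5\eta_{16}\sigma_{17}=\zeta_5\bar{\nu}_{16}+\zeta_5\varepsilon_{16}$ in $\pi^5_{24}$. Hence it suffices to prove (a) $\zeta_5\bar{\nu}_{16}=0$ and (b) $\zeta_5\eta_{16}\sigma_{17}=\nu_5\mu_8\sigma_{17}$; the remaining equality $\zeta_5\varepsilon_{16}=\zeta_5\eta_{16}\sigma_{17}$ then drops out.

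For (b), relation \eqref{nu9m} gives $\zeta_6\eta_{17}=\nu_6\mu_9$ one suspension above, so $\zeta_5\eta_{16}-\nu_5\mu_8$ lies in $\ker(E\colon\pi^5_{17}\to\pi^6_{18})$, which by the EHP sequence is $P(\pi^{11}_{19})$. Since $\pi^{11}_{19}$ is stable and equals $\{\bar{\nu}_{11},\varepsilon_{11}\}$, and since $(P\beta)\circ\sigma_{17}=\pm P(\beta\circ\sigma_{19})$ with $\sigma_{19}=E^2\sigma_{17}$, we obtain $P(\pi^{11}_{19})\circ\sigma_{17}=\pm P(\pi^{11}_{19}\circ\sigma_{19})=0$, using $\bar{\nu}_9\sigma_{17}=\varepsilon_9\sigma_{17}=0$ (cf. \eqref{et9sg2}) suspended twice. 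Therefore $(\zeta_5\eta_{16}-\nu_5\mu_8)\circ\sigma_{17}=0$, which is (b). (One could instead use $P\beta=[\iota_5,\iota_5]\circ\beta'=\nu_5\eta_8\circ\beta'$ via \eqref{n3} and absorb the same vanishing factors.)

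The real obstacle is (a), $\zeta_5\bar{\nu}_{16}=0$, which has to be settled inside the unstable group $\pi^5_{24}$. My approach is to invoke the bracket presentation $\bar{\nu}\in\langle\nu,\eta,\nu\rangle$, valid unstably at level $16$, giving $\zeta_5\bar{\nu}_{16}\in\zeta_5\circ\langle\nu_{16},\eta_{19},\nu_{20}\rangle\subseteq\langle\zeta_5\nu_{16},\eta_{19},\nu_{20}\rangle$. One first shows $\zeta_5\nu_{16}=0$ in $\pi^5_{19}$: stably $\nu\zeta=0$ by \eqref{nuzt}, and the single unstable discrepancy recorded by $\nu_7\zeta_{10}=(E^2\sigma''')\sigma_{14}$ comes from a Whitehead product that an EHP inspection shows is not present for the ordering $\zeta_5\nu_{16}$ at level $5$. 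Then $\langle\zeta_5\nu_{16},\eta_{19},\nu_{20}\rangle=\langle 0,\eta_{19},\nu_{20}\rangle$ collapses to its indeterminacy $\pi^5_{21}\circ\nu_{21}$ (the term $\zeta_5\nu_{16}\circ\pi^{19}_{24}$ vanishes since $\pi^{19}_{24}=\pi^s_5=0$), which one reads off to be $0$ from Toda's table for $\pi^5_{21}$. A variant that sidesteps $\pi^5_{21}$: by the first paragraph together with (b) it is equivalent to prove $\zeta_5\varepsilon_{16}=\nu_5\mu_8\sigma_{17}$ (note $2\nu_5\mu_8\sigma_{17}=0$), which can be extracted from a Toda bracket expression for $\varepsilon$ and \eqref{nu9m}. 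In either route the only delicate point is trimming an unstable indeterminacy in $\pi^5_{24}$; the underlying stable statement $\zeta\bar{\nu}=0$ is easy, as $\zeta\circ\langle\nu,\eta,\nu\rangle\subseteq\langle 0,\eta,\nu\rangle=0$ with $\pi^s_{13}=0$ killing the indeterminacy. Granting (a), the chain $\zeta_5\varepsilon_{16}=\zeta_5\eta_{16}\sigma_{17}=\nu_5\mu_8\sigma_{17}$ follows, completing the lemma.
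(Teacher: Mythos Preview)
Your overall decomposition is sound, and your argument for (b) via $\zeta_6\eta_{17}=\nu_6\mu_9$ together with $P(\pi^{11}_{19})\circ\sigma_{17}=0$ is correct (the paper simply cites this equality from \^Oguchi). The problem is in (a): your claim that $\zeta_5\nu_{16}=0$ in $\pi^5_{19}$ is false, not merely unproved. By \^Oguchi's relation $\zeta_5\nu_{16}\equiv\nu_5\zeta_8\ \bmod\ \nu_5\bar{\nu}_8\nu_{16}$, two suspensions give $\zeta_7\nu_{18}=\nu_7\zeta_{10}$ (the correction term dies since $E(\bar{\nu}_9\nu_{17})=E[\iota_9,\nu_9]=0$), and by \eqref{nuzt} one has $\nu_7\zeta_{10}=(E^2\sigma''')\sigma_{14}=4\sigma'\sigma_{14}\ne 0$ in $\pi^7_{21}$. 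Hence $\zeta_5\nu_{16}\ne 0$, and your bracket does not collapse to $\{0,\eta_{19},\nu_{20}\}$. The phrase ``an EHP inspection shows it is not present for the ordering $\zeta_5\nu_{16}$'' is exactly where the argument breaks.

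What the paper does instead is keep the nonzero $\zeta_5\nu_{16}$ and work with the bracket $\{\zeta_5\nu_{16},\eta_{19},\nu_{20}\}_1$. Using \^Oguchi's congruence one replaces this by $\{\nu_5\zeta_8,\eta_{19},\nu_{20}\}_1$ (after checking $\{\nu_5\bar{\nu}_8\nu_{16},\eta_{19},\nu_{20}\}_1\ni 0$ via $\bar{\nu}^2=0$), then factors out $\nu_5$ to land in $\nu_5\circ\{\zeta_8,\eta_{19},\nu_{20}\}_1$. The key input is the preparatory result $\mu_7\sigma_{16}\notin\{\zeta_7,\eta_{18},\nu_{19}\}$ (equation \eqref{tze7tet}), which pins down $\{\zeta_8,\eta_{19},\nu_{20}\}_1$ modulo elements killed by $\nu_5$. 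Also, the indeterminacy $\pi^5_{21}\circ\nu_{21}=0$ is not something one ``reads off'': it needs $\eta_5\bar{\varepsilon}_6\nu_{21}=\eta^2_5\kappa_7\nu_{21}=\eta^2_5\nu_7\kappa_{10}=0$ (using \eqref{bep}, \eqref{n7k}, \eqref{n2}) in addition to $\sigma_{14}\nu_{21}=0$. Your ``variant'' through a bracket description of $\varepsilon$ is not carried out and would face the same unstable obstructions in $\pi^5_{24}$.
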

\begin{proof}
Since $\bar{\nu}_{16}= \{\nu_{16}, \eta_{19}, \nu_{20}\}_1$
\cite[Lemma 6.2]{T} and $\pi^{19}_{24}=0$ \cite[Proposition 5.9]{T},
we have 
$$
\zeta_5\bar{\nu}_{16}\in\zeta_5\circ\{\nu_{16},\eta_{19},\nu_{20}\}_1
\subset\{\zeta_5\nu_{16},\eta_{19},\nu_{20}\}_1\ \bmod\ \pi^5_{21}\circ\nu_{21}.
$$
By the fact that $\pi^5_{21}=\{\mu_5\sigma_{14},\eta_5\bar{\varepsilon}_6\}$ 
\cite[Theorem 12.6]{T} and the relation $\sigma_{12}\nu_{19}=0$ \cite[(7.20)]{T},
 we obtain $\pi^5_{21}\circ\nu_{21}=\{\eta_5\bar{\varepsilon}_6\nu_{21}\}$. 
By the relations (\ref{bep}), (\ref{n7k}) and $\eta_6\nu_7=0$ (\ref{n2}), we have
\begin{equation}\label{ot}
\eta_5\bar{\varepsilon}_6\nu_{21}=\eta_5\eta_6\kappa_{7}\nu_{21}
	=\eta_5\eta_6\nu_7\kappa_{10}=0.
\end{equation}
So, we get that ${\rm Ind}\{\zeta_5\nu_{16},\eta_{19},\nu_{20}\}_1=
\pi^5_{21}\circ\nu_{21}=0$. 

We recall the relation \cite[Proposition 2.4(2)]{Og}:
$$
\zeta_5\nu_{16}\equiv \nu_5\zeta_8\ \bmod\ \nu_5\bar{\nu}_8\nu_{16}.
$$
By the relation $\bar{\nu}^2_5=0$ \cite[Proposition 2.8(2)]{Og}, we have 
$$
\{\nu_5\bar{\nu}_8\nu_{16},\eta_{19},\nu_{20}\}_1\supset\nu_5\bar{\nu}_8\circ\{\nu_{16},\eta_{19},\nu_{20}\}_1\ni\nu_5\bar{\nu}^2_8=0\ \bmod\ 
\pi^5_{21}\circ\nu_{21}=0.
$$
Hence, we obtain 
$$
\{\zeta_5\nu_{16},\eta_{19},\nu_{20}\}_1=\{\nu_5\zeta_8,\eta_{19},\nu_{20}\}_1.
$$
We observe that
$$
\{\nu_5\zeta_8,\eta_{19},\nu_{20}\}_1\supset\nu_5\circ\{\zeta_8,\eta_{19},\nu_{20}\}_1\ \bmod\ \pi^5_{21}\circ\nu_{21}=0.
$$
By the fact that
$\pi^8_{21}=\{\sigma_8\nu^2_{15},\nu_8\sigma_{11}\nu_{18}\}\cong(\Z_2)^2$
\cite[Theorem 7.7]{T} and the relation \eqref{epep}, we obtain 
$$
E\{\zeta_7,\eta_{18},\nu_{19}\}\subset\{\zeta_8,\eta_{19},\nu_{20}\}_1\ \bmod\ \pi^8_{21}\circ\nu_{21}=\{\sigma_8\nu^3_{15},\eta_8\bar{\varepsilon}_9\}.
$$
So, by (\ref{tze7tet}), we have
$$
\{\zeta_8,\eta_{19},\nu_{20}\}_1\subset\{\sigma_8\nu^3_{15},\eta_8\bar{\varepsilon}_9,(E\sigma')\mu_{15},(E\sigma')\eta_{15}\varepsilon_{16}\}.
$$
By the relations \eqref{epep} and \eqref{ot}, we have
$\nu_5\sigma_8\nu^3_{15}=\eta_5\bar{\varepsilon}_6\nu_{21}=0$.
Hence, by (\ref{sg'nu}) and $2\nu_5\sigma_8=\nu_5(E\sigma')$
\cite[(7.16)]{T}, we see that
$\nu_5\circ\{\zeta_8,\eta_{19},\nu_{20}\}_1=0$. This leads to the first half. 

From the fact that $\bar{\nu}_{16}=\varepsilon_{16}+\eta_{16}\sigma_{17}$ and the first half, we obtain $\zeta_5\varepsilon_{16}=\zeta_5\eta_{16}\sigma_{17}$. 
The last equality is just \cite[Proposition 2.20(11)]{Og}. This leads to the second half and completes the proof. 
\end{proof}

We recall the relation \cite[Lemma 12.13]{T}:
\begin{equation}\label{16}
16\bar{\sigma}_6=\nu_6\mu_9\sigma_{18}.
\end{equation}
By the group structures of $\pi^k_{k+19}$ for $k=6,7$ and  the arguments in \cite[p. 151-3]{T} including \cite[Lemma 12.13, (12.17)]{T}, we obtain
\begin{equation}\label{2bsg}
2\bar{\sigma}_6=yP(\sigma^2_{13}) \ (y: \mbox{odd}).
\end{equation}

By Lemma \ref{zetep} and (\ref{2bsg}), we have
\begin{equation}\label{ze6ep}
\zeta_6\varepsilon_{17}=\zeta_6\eta_{17}\sigma_{18}=16\bar{\sigma}_6
\text{ and }
\zeta_7\varepsilon_{18}=0.
\end{equation}

We show 
\begin{lem}\label{sgnep}
{\rm (1)} $\langle\sigma,\nu,\sigma\rangle=\xi$. \\
{\rm (2)} $\langle\sigma,\nu,\eta\sigma\rangle
 =\langle\sigma,\nu,\varepsilon\rangle=\langle\nu,\sigma,\varepsilon\rangle
 =\langle\nu,\varepsilon,\sigma\rangle=0$.
\end{lem}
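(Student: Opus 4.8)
The plan is to treat both parts with the formal calculus of stable three‑fold brackets --- bilinearity, the shuffle formula, the composition rules $\alpha\circ\langle\beta,\gamma,\delta\rangle\subseteq\langle\alpha\beta,\gamma,\delta\rangle$ and $\langle\alpha,\beta,\gamma\rangle\circ\delta\subseteq\langle\alpha,\beta,\gamma\delta\rangle$, the nested‑bracket identity, and the cyclic (Jacobi) relation --- each step accompanied by a short check that the relevant composites and indeterminacies vanish.

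For (1), one first records $\sigma\nu=\nu\sigma=0$ in $\pi^s_{10}$, which follows from $\sigma_{12}\nu_{19}=0$ \cite[(7.20)]{T} (already stable) and graded commutativity, so $\langle\sigma,\nu,\sigma\rangle\subseteq\pi^s_{18}$ is defined; its indeterminacy $\sigma\circ\pi^s_{11}+\pi^s_{11}\circ\sigma$ is trivial because $\sigma\zeta=\zeta\sigma=0$ in $\pi^s_{18}$ (by \cite{T}; compare \eqref{n7}), so it is a single element. To see that this element is $\xi$ I would either quote from Toda's description of $\pi^s_{18}$ that $\xi\in\langle\sigma,\nu,\sigma\rangle$ --- equivalently that $\{\sigma_{13},\nu_{20},\sigma_{23}\}$ contains $\xi_{13}$ in $\pi^{13}_{31}=\Z_8\{\xi_{13}\}\oplus\Z_8\{\lambda\}\oplus\Z_2\{\eta_{13}\bar\mu_{14}\}$ --- and then suspend and invoke triviality of the indeterminacy; or else apply $\nu$ and use the shuffle $\nu\circ\langle\sigma,\nu,\sigma\rangle=\pm\langle\nu,\sigma,\nu\rangle\circ\sigma$ together with $\nu_9\xi_{12}=\sigma^3_9$ \eqref{n9xi}. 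The latter determines $\langle\sigma,\nu,\sigma\rangle$ modulo $\ker(\nu\circ-\colon\pi^s_{18}\to\pi^s_{21})$, and the structure of $\pi^s_{18}$, in which $\nu^*$ and $\xi$ are separated by $\nu\nu^*\ne\sigma^3$, then forces $\langle\sigma,\nu,\sigma\rangle=\xi$.

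For (2), write $\eta_9\sigma_{10}=\bar\nu_9+\varepsilon_9$ \eqref{et9sg}; by bilinearity the four brackets become combinations of brackets of the shape $\langle\sigma,\nu,x\rangle$ and $\langle\nu,x,\sigma\rangle$ with $x\in\{\bar\nu,\varepsilon\}$, together with the single ``base'' bracket $\langle\sigma,\nu,\eta\sigma\rangle$. That base bracket is zero: $\nu\eta\in\pi^s_4=0$, so $\langle\sigma,\nu,\eta\rangle$ is defined and lies in $\pi^s_{12}=0$, hence $0\in\langle\sigma,\nu,\eta\rangle\circ\sigma\subseteq\langle\sigma,\nu,\eta\sigma\rangle$, and since $\sigma\circ\pi^s_{12}=0$ and $\zeta\eta\sigma=0$ the indeterminacy is trivial, so $\langle\sigma,\nu,\eta\sigma\rangle=0$. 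For the $\bar\nu$‑ and $\varepsilon$‑pieces I would fix a three‑fold‑bracket presentation of $\bar\nu$ (resp.\ $\varepsilon$) with $\eta$, or another class annihilated by $\nu$, in the first slot (in the spirit of $\bar\nu=\langle\nu,\eta,\nu\rangle$ \cite[Lemma 6.2]{T} and its shuffled forms), so that the nested‑bracket identity gives $\langle\sigma,\nu,\bar\nu\rangle\supseteq\langle\langle\sigma,\nu,\eta\rangle,*,*\rangle\ni0$, and similarly for $\langle\nu,\bar\nu,\sigma\rangle$; then a check that the relevant indeterminacies in $\pi^s_{19}$ vanish --- $\sigma\circ\pi^s_{12}=0$, $\nu\circ\pi^s_{16}=0$ (as $\nu\sigma=0$ and $\nu\eta^*=0$), $\zeta\bar\nu=0$ by Lemma \ref{zetep}, and $\zeta\varepsilon=16\bar\sigma=\nu\mu\sigma=0$ by \eqref{ze6ep}, \eqref{16} and $\pi^s_{12}=0$ --- turns ``$0\in$'' into ``$=0$''. (An alternative route for $\langle\sigma,\nu,\bar\nu\rangle$ is to compose with $\eta$: by $\bar\nu\eta=\nu^3$ \eqref{etbn} and $\langle\sigma,\nu,\eta\rangle=0$ one gets $\langle\sigma,\nu,\bar\nu\rangle\circ\eta\subseteq\langle\sigma,\nu,\nu^3\rangle\ni0$, after which one still has to use the $\eta$‑multiplication on $\pi^s_{19}$.) Finally $\sigma\nu$, $\nu\sigma$, $\nu\varepsilon$ (from $\nu\bar\nu=0$; cf.\ \eqref{bnun}), $\varepsilon\sigma$ and $\bar\nu\sigma$ (both \eqref{bnu6sg}) all vanish, so the cyclic relation $\langle\sigma,\nu,\varepsilon\rangle+\langle\nu,\varepsilon,\sigma\rangle+\langle\varepsilon,\sigma,\nu\rangle\ni0$ and the reversal $\langle\nu,\sigma,\varepsilon\rangle=\pm\langle\varepsilon,\sigma,\nu\rangle$ tie the four brackets together, and the vanishing of the $\langle\sigma,\nu,x\rangle$‑ and $\langle\nu,x,\sigma\rangle$‑pieces propagates to all of them.

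The genuine obstacles, as I see them, are two. In (1) the shuffle fixes $\langle\sigma,\nu,\sigma\rangle$ only up to $\ker(\nu\circ-)$, so the identification with $\xi$ rather than $\nu^*$ or $\xi+\nu^*$ really uses the fine structure of $\pi^s_{18}$; I would rely on Toda's explicit computation or on the desuspension to $\pi^{13}_{31}$. In (2) the brackets $\langle\sigma,\nu,\bar\nu\rangle$ and $\langle\nu,\bar\nu,\sigma\rangle$ do not collapse for a dimension reason, and making the nested‑bracket identity applicable --- i.e.\ producing a bracket presentation of $\bar\nu$ (or $\varepsilon$) whose intervening composite with $\nu$ vanishes --- together with the indeterminacy bookkeeping in $\pi^s_{19}$, is where the actual work lies.
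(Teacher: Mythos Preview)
Your treatment of (1) and of $\langle\sigma,\nu,\eta\sigma\rangle$ matches the paper's. For $\langle\sigma,\nu,\varepsilon\rangle$ the paper does exactly what you were groping for: it uses the specific presentation $\varepsilon=\langle\eta,2\iota,\nu^2\rangle$ \cite[(6.1)]{T}, which has $\eta$ in the first slot so that $\nu\eta=0$ makes the Jacobi identity for nested brackets applicable, and both inner brackets $\langle\nu,\eta,2\iota\rangle\subset\pi^s_5=0$ and $\langle\sigma,\nu,\eta\rangle\subset\pi^s_{12}=0$ vanish for dimension reasons. Your hesitation about finding such a presentation for $\bar\nu$ is justified---the standard $\bar\nu=\langle\nu,\eta,\nu\rangle$ has $\nu$ in the first slot and $\nu^2\ne0$---but this is irrelevant since one works with $\varepsilon$ directly rather than reducing to a $\bar\nu$-piece.

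The genuine gap is in your handling of $\langle\nu,\sigma,\varepsilon\rangle$. Your decomposition scheme implicitly assumes the $\bar\nu$-pieces vanish, but in fact $\langle\nu,\sigma,\bar\nu\rangle=\bar\sigma\ne0$ in $\pi^s_{19}$ \cite[I-Proposition 3.3(4)]{Od}. The paper's argument does not avoid this nonzero term; it \emph{uses} it. Writing $\varepsilon=\bar\nu+\eta\sigma$ gives
\[
\langle\nu,\sigma,\varepsilon\rangle\subset\langle\nu,\sigma,\bar\nu\rangle+\langle\nu,\sigma,\eta\sigma\rangle
=\langle\nu,\sigma,\bar\nu\rangle+\langle\eta\sigma,\nu,\sigma\rangle=\bar\sigma+\bar\sigma=0,
\]
invoking from \cite{Od} the second identification $\langle\eta\sigma,\nu,\sigma\rangle=\bar\sigma$ so that the two nonzero pieces cancel (recall $2\bar\sigma=0$ stably). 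Only after $\langle\nu,\sigma,\varepsilon\rangle=0$ is established does the cyclic relation, together with the reversal $\langle\varepsilon,\sigma,\nu\rangle=\langle\nu,\sigma,\varepsilon\rangle$, yield $\langle\nu,\varepsilon,\sigma\rangle=0$. Your plan to ``propagate vanishing of pieces'' cannot close this case; the cyclic relation alone gives only $\langle\nu,\varepsilon,\sigma\rangle+\langle\nu,\sigma,\varepsilon\rangle\ni0$, which is one equation in two unknowns.
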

\begin{proof}
Firstly, we recall that $\pi_{11}^S=\{\zeta\}$, $\pi_{12}^S=0$, $\pi_{16}^S=\{\eta\rho, \omega\}$ and the relations 
$\eta\nu=0$ (\ref{n2}), $\sigma\zeta=0$ (\ref{n7}), $\varepsilon\zeta=0$ (\ref{ze6ep}), $\nu\omega=0$ (\ref{n11om}).
The indeterminacies of all brackets are $0$, because
$\sigma\circ\pi^s_{11}=\pi^s_{11}\circ\sigma=0$ and 
$$
\sigma\circ\pi^s_{12}=\pi^s_{12}\circ\sigma=\pi^s_{11}\circ\eta\sigma=\pi^s_{11}\circ\varepsilon
=\nu\circ\pi^s_{16}=0
$$

The relation (1) follows directly from the definition of $\xi_{12}$ \cite[p.153]{T}. 

By the fact that $\langle\sigma,\nu,\eta\rangle\subset \pi^S_{12}=0$,
we have 
$$
\langle\sigma,\nu,\eta\sigma\rangle\supset\langle\sigma,\nu,\eta\rangle\circ\sigma=0
\ \bmod\ 0.
$$

By the Jacobi identity of Toda brackets \cite[(3.7)]{T}, the definition of $\varepsilon$
\cite[(6.1)]{T}, the relation 
$\langle\nu,\eta,2\iota\rangle\subset\pi_5^S=0$ and $\pi_{13}^S=0$, we obtain
\[\begin{split}
\langle\sigma,\nu,\varepsilon\rangle
 =\langle\sigma,\nu,\langle\eta,2\iota,\nu^2\rangle\rangle
 &\equiv\langle\sigma,\langle\nu,\eta,2\iota\rangle,\nu^2\rangle
  +\langle\langle\sigma,\nu,\eta\rangle,2\iota,\nu^2\rangle\\
 &=\langle\sigma,0,\nu^2\rangle
  +\langle0,2\iota,\nu^2\rangle\\
&\ni 0\ \bmod\ \sigma\circ\pi^s_{12}+\pi^s_{13}\circ\nu^2=0.
\end{split}\]

By the relations 
$\langle\eta\sigma,\nu,\sigma\rangle
 =\langle\nu,\sigma,\bar{\nu}\rangle=\bar{\sigma}$
\cite[I-Proposition 3.3 (4)]{Od}, \eqref{et9sg} and using \cite[(3.9).i)]{T},
we have 
\[\begin{split}
\langle\nu,\sigma,\varepsilon\rangle=\langle\nu,\sigma,\bar{\nu}+\eta\sigma\rangle
 &\subset \langle\nu,\sigma,\bar{\nu}\rangle+ \langle\nu,\sigma,\eta\sigma\rangle\\
 &=\langle\nu,\sigma,\bar{\nu}\rangle+ \langle\eta\sigma,\nu,\sigma\rangle
 =\{\bar{\sigma}+\bar{\sigma}\}=0.
\end{split}
\]
By the relations $\langle\sigma,\nu,\varepsilon\rangle=0$, 
$\langle\nu,\sigma,\varepsilon\rangle=0$ and use of \cite[(3.9).ii), (3.10)]{T}, we have
\[
\begin{split}
0&\in\langle\nu,\varepsilon,\sigma\rangle+\langle\varepsilon,\sigma,\nu\rangle
  +\langle\sigma,\nu,\varepsilon\rangle\\
&=\langle\nu,\varepsilon,\sigma\rangle+\langle\nu,\sigma,\varepsilon\rangle
  +\langle\sigma,\nu,\varepsilon\rangle\\
 &=\langle\nu,\varepsilon,\sigma\rangle\ \bmod 0.
\end{split}
\]  
This leads the last equality of (2) and completes the proof.
\end{proof}

We show 
\begin{cor}\label{sgnep1}
$\{\nu_{11},\sigma_{14},\varepsilon_{21}\}\subset\{\lambda'\eta_{29}, \xi'\eta_{29}\}$ and $E^2\{\nu_{11},\sigma_{14},\varepsilon_{21}\}=0$.
\end{cor}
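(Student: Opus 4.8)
The plan is to derive Corollary \ref{sgnep1} from Lemma \ref{sgnep}(2) by a standard ``compose with identity and desuspend'' argument. First I would recall that by Lemma \ref{sgnep}(2) we have $\langle\nu,\sigma,\varepsilon\rangle=0$ in the stable range, so the unstable bracket $\{\nu_{11},\sigma_{14},\varepsilon_{21}\}\subset\pi^{11}_{30}$ must consist of elements that suspend to $0$. The group $\pi^{11}_{30}$ is known (from \cite{T}); I would identify the kernel of the suspension $E\colon\pi^{11}_{30}\to\pi^{12}_{31}$ using the EHP sequence together with the computation of $\pi^{11}_{29}$ already invoked in the excerpt via \eqref{2lambda} and \eqref{2xi}, where $\lambda',\xi'\in\pi^{11}_{29}$ are the relevant generators. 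The elements $\lambda'\eta_{29}$ and $\xi'\eta_{29}$ lie in $\pi^{11}_{30}$, and one checks $E(\lambda'\eta_{29})=2\lambda\eta_{31}$ and $E(\xi'\eta_{29})=2\xi_{13}\eta_{31}$, both of which vanish by \eqref{n11om} (indeed $2\eta_n=0$). Hence $\{\lambda'\eta_{29},\xi'\eta_{29}\}$ sits inside the suspension kernel; the task is to see that this accounts for all of $\{\nu_{11},\sigma_{14},\varepsilon_{21}\}$.

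Next I would pin down the indeterminacy of the unstable bracket: ${\rm Ind}\{\nu_{11},\sigma_{14},\varepsilon_{21}\}=\nu_{11}\circ\pi^{14}_{30}+\pi^{11}_{23}\circ\varepsilon_{23}$. Using the structure of $\pi^{14}_{30}$ and $\pi^{11}_{23}$ from \cite{T}, together with relations already recorded — in particular $\bar\nu_6\mu_{14}=0$ and $\bar\nu_6\sigma_{14}=0$ type relations, $\mu_7\sigma_{16}$-type vanishing, and \eqref{nuzt}, \eqref{nu11ro} — I expect this indeterminacy to be contained in (in fact to equal, up to the claim) $\{\lambda'\eta_{29},\xi'\eta_{29}\}$, or at any rate to be absorbed into it. Then I would choose the representative of $\{\nu_{11},\sigma_{14},\varepsilon_{21}\}$ coming from a lift of the stable nullhomotopy: since $E^2\langle\nu,\sigma,\varepsilon\rangle$ (and hence $E^2$ of the unstable bracket) is $0$ by Lemma \ref{sgnep}(2) and the fact that the double suspension into the stable range is already an isomorphism here, every element of $\{\nu_{11},\sigma_{14},\varepsilon_{21}\}$ dies under $E^2$, giving immediately $E^2\{\nu_{11},\sigma_{14},\varepsilon_{21}\}=0$. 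The containment $\{\nu_{11},\sigma_{14},\varepsilon_{21}\}\subset\{\lambda'\eta_{29},\xi'\eta_{29}\}$ then follows because $\{\lambda'\eta_{29},\xi'\eta_{29}\}$ is precisely $\Ker(E^2)$ restricted to the relevant summand of $\pi^{11}_{30}$, which I would verify by the EHP computation.

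Concretely, the key steps in order: (i) invoke Lemma \ref{sgnep}(2) to get $\langle\nu,\sigma,\varepsilon\rangle=0$ stably; (ii) set up the EHP sequences $\pi^{12}_{31}\rarrow{P}\pi^{11}_{29}\rarrow{E}\pi^{12}_{30}\rarrow{H}\cdots$ and one dimension down, and use \eqref{2lambda}, \eqref{2xi} to locate $\lambda',\xi'$ and compute the kernel of $E^2\colon\pi^{11}_{30}\to\pi^{13}_{32}$; (iii) show $\lambda'\eta_{29}$ and $\xi'\eta_{29}$ span that double-suspension kernel (using $E(\lambda'\eta_{29})=2\lambda\eta_{31}=0$, $E(\xi'\eta_{29})=2\xi_{13}\eta_{31}=0$ from \eqref{n11om}); (iv) compute ${\rm Ind}\{\nu_{11},\sigma_{14},\varepsilon_{21}\}$ and check it lands in $\{\lambda'\eta_{29},\xi'\eta_{29}\}$; (v) conclude $\{\nu_{11},\sigma_{14},\varepsilon_{21}\}\subset\{\lambda'\eta_{29},\xi'\eta_{29}\}$ and $E^2\{\nu_{11},\sigma_{14},\varepsilon_{21}\}=0$. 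The main obstacle I anticipate is step (iii)–(iv): correctly identifying $\pi^{11}_{30}$ and the double-suspension kernel within it from Toda's tables, and verifying that the indeterminacy of the unstable bracket does not spill outside $\{\lambda'\eta_{29},\xi'\eta_{29}\}$ — this requires careful bookkeeping of $\pi^{14}_{30}\circ$ and $\circ\,\varepsilon_{23}$ contributions against the relations already collected in Section 2, but no genuinely new input beyond Lemma \ref{sgnep} and the EHP machinery.
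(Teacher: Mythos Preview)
Your approach is essentially the paper's, but you are working harder than necessary and the order of deductions is inverted. The paper's argument is two lines: quote $\pi^{11}_{30}=\{\lambda'\eta_{29}, \xi'\eta_{29}, \bar{\zeta}_{11}, \bar{\sigma}_{11}\}$ from \cite[Theorem 12.23]{T}, observe that $\{\bar{\zeta}_{11}, \bar{\sigma}_{11}\}\cong\pi^s_{19}$ injects under stabilization, and then Lemma \ref{sgnep}(2) forces $\{\nu_{11},\sigma_{14},\varepsilon_{21}\}\subset\{\lambda'\eta_{29}, \xi'\eta_{29}\}$ immediately; the $E^2$-vanishing is then read off from \eqref{2lambda}, \eqref{2xi} and $2\eta=0$. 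There is no need to compute the indeterminacy of the bracket separately or to run an EHP analysis to identify $\Ker(E^2)$ --- citing the known group structure of $\pi^{11}_{30}$ does all the work at once.

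One genuine wrinkle in your write-up: stable vanishing from Lemma \ref{sgnep}(2) gives only $E^\infty\{\nu_{11},\sigma_{14},\varepsilon_{21}\}=0$, not $E^2$-vanishing directly, and your sentence ``the double suspension into the stable range is already an isomorphism here'' is not correct as stated (the $19$-stem stabilizes only for $n\ge 21$, and $\pi^{13}_{32}$ is two suspensions short). So the deduction must run the other way: from $E^\infty$-vanishing conclude the containment in $\Ker(E^\infty)=\{\lambda'\eta_{29},\xi'\eta_{29}\}$, and only then derive $E^2\{\nu_{11},\sigma_{14},\varepsilon_{21}\}=0$ from \eqref{2lambda} and \eqref{2xi}. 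With that reordering your plan is correct and coincides with the paper's.
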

\begin{proof}
We recall 
$
\pi^{11}_{30}=\{\lambda'\eta_{29}, \xi'\eta_{29}, \bar{\zeta}_{11}, \bar{\sigma}_{11}\}
$ \cite[Theorem 12.23]{T}.
By stabilizing this result, Lemma \ref{sgnep} (2) and using the fact that 
$\{\bar{\zeta}_{11}, \bar{\sigma}_{11}\}\cong\pi^s_{19}$, we obtain
$$
\{\nu_{11},\sigma_{14},\varepsilon_{21}\}\subset\{\lambda'\eta_{29}, \xi'\eta_{29}\}.
$$
This leads to the first half.

The second half is obtained from the first half, \eqref{2lambda} and \eqref{2xi}.
This completes the proof.
\end{proof}

\section{Proof of Lemma \ref{HPxl13}}

We recall the relation \cite[I-Proposition 3.1(1)]{Od}
\begin{equation}\label{nulm}
\nu_{10}\lambda=\sigma_{10}\kappa_{17}. 
\end{equation}

By (\ref{n7k}) and (\ref{nulm}), we obtain 
\begin{equation}\label{nulmn}
\nu_{10}\lambda\nu_{31}=\sigma_{10}\nu_{17}\kappa_{20}. 
\end{equation}

By \cite[II-Proposition 2.1(2)]{Od}, 
\begin{equation}\label{k7s}
\kappa_7\sigma_{21}=0.
\end{equation}

By \cite[Proposition 1(4)]{Od1}, 
\begin{equation}\label{2xi"}
2\xi''\equiv \sigma_{10}\zeta_{17},\ \bmod\ 2\sigma_{10}\zeta_{17}. 
\end{equation}

Next we recall the element $\omega'\in\pi^{12}_{31}$ \cite[Lemma 12.21, (12.27), p. 166]{T}: 
\begin{equation}\label{e2omg}
E^2\omega'=2\omega_{14}\nu_{30}=[\iota_{14},\nu^2_{14}] \text{\ and\ }
H(\omega')\equiv\varepsilon_{23}\ 
\bmod\ \varepsilon_{23}+\bar{\nu}_{23}.
\end{equation}

By \cite[(7.30), p. 166]{T}, we have
\begin{equation}\label{x13et}
\xi_{13}\eta_{31}=[\iota_{13},\sigma_{13}]
=(E\theta)\sigma_{25}.
\end{equation}

By the relations 
$
\nu_{13}\eta^*_{16}\equiv E\omega'\ \bmod\ \xi_{13}\eta_{31}
$
\cite[Proposition 2.20(8)]{Og},
(\ref{e2omg}) and (\ref{x13et}), 
we have 
\begin{equation}\label{n14et*}
\nu_{14}\eta^*_{17}=[\iota_{14},\nu^2_{14}].
\end{equation}

We show the following lemma overlapping with \cite[Lemma 2.18]{Og}:

\begin{lem}\label{usgnep}
\begin{description}
\item{\rm (1)}
$\{2\sigma_{11},\nu_{18},\sigma_{21}\}\equiv\xi'\ \bmod\ 2\lambda', 2\xi'$.
\item{\rm (2)}
$\{2\sigma_{11},\nu_{18},\varepsilon_{21}\}=\lambda'\eta_{29}$.
\item{\rm (3)}
$\xi_{12}\eta_{30}\equiv\theta\sigma_{24}\ \bmod\ [\iota_{12},\eta_{12}\sigma_{13}]$. 
\item{\rm (4)}
$\{\sigma_{12},\nu_{19},\varepsilon_{22}\}\ni\omega'+a\xi_{12}\eta_{30}
 \bmod\  (E\lambda')\eta_{30},(E\xi')\eta_{30}$ for $a\in\{0,1\}$.
\end{description}
\end{lem}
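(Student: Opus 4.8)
We outline the argument. All four assertions are desuspension statements, and the uniform method is, for each unstable Toda bracket, to determine its image under $E$ (or $E^{2}$) and, where needed, its Hopf invariant $H$, to match these with the defining $E$- and $H$-data of $\xi_{13},\lambda',\xi',\theta,\omega'$, and then to control the residual desuspension ambiguity via the EHP exact sequence together with the group structures of \cite[\S 12]{T}. For (1): the bracket is defined, since $2\sigma_{11}\circ\nu_{18}=2[\iota_{11},\iota_{11}]=0$ by \eqref{sgm11n} (as $[\iota_{11},\iota_{11}]$ has order $2$, $11$ being odd) and $\nu_{18}\sigma_{21}=0$, and its indeterminacy $2\sigma_{11}\circ\pi^{18}_{29}+\pi^{11}_{22}\circ\sigma_{22}$ is contained in $\{2\lambda',2\xi'\}$ by \eqref{n7} and the known groups $\pi^{18}_{29},\pi^{11}_{22}$. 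Suspending twice, $E^{2}\{2\sigma_{11},\nu_{18},\sigma_{21}\}\subseteq\{2\sigma_{13},\nu_{20},\sigma_{23}\}\supseteq 2\langle\sigma_{13},\nu_{20},\sigma_{23}\rangle\ni 2\xi_{13}=E^{2}\xi'$, using the definition of $\xi_{13}$ \cite[p.\ 153]{T} and \eqref{2xi}; since $\{2\sigma_{13},\nu_{20},\sigma_{23}\}$ has vanishing indeterminacy (again by \eqref{n7}), every representative of $\{2\sigma_{11},\nu_{18},\sigma_{21}\}$ suspends to $E^{2}\xi'$, that is, lies in $\xi'+\Ker(E^{2}\colon\pi^{11}_{29}\to\pi^{13}_{31})$, and an EHP computation bounding this kernel by $\{2\lambda',2\xi'\}$ gives the assertion.

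For (2): the bracket $\{2\sigma_{11},\nu_{18},\varepsilon_{21}\}$ is defined and has vanishing indeterminacy ($\pi^{18}_{30}=\pi^{s}_{12}=0$, and $\zeta_{11}\varepsilon_{22}=0$ by \eqref{ze6ep}). By Lemma \ref{sgnep}(2) it stabilises to $0$, so it lies in the subgroup of $\pi^{11}_{30}=\{\lambda'\eta_{29},\xi'\eta_{29},\bar{\zeta}_{11},\bar{\sigma}_{11}\}$ \cite[Theorem 12.23]{T} spanned by $\lambda'\eta_{29}$ and $\xi'\eta_{29}$ (the only elements there killed by $E^{\infty}$; indeed $E^{2}$ already kills them by \eqref{n11om}, since $\xi_{13}\eta_{31}$ has order $2$). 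Finally $H$ separates the two: $H(\lambda')=\varepsilon_{21}$ and $H(\xi')=\bar{\nu}_{21}+\varepsilon_{21}$ by \eqref{2lambda}, \eqref{2xi}, so $H(\lambda'\eta_{29})$ and $H(\xi'\eta_{29})$ differ by $\bar{\nu}_{21}\eta_{29}=\nu^{3}_{21}\neq 0$ (cf.\ \eqref{etbn}); computing $H\{2\sigma_{11},\nu_{18},\varepsilon_{21}\}$ from the behaviour of $H$ on a Toda bracket whose leading term $2\sigma_{11}=E^{4}\sigma'$ is a suspension yields the equality. For (3): from \eqref{x13et} one has $\xi_{13}\eta_{31}=[\iota_{13},\sigma_{13}]=(E\theta)\sigma_{25}$, so $E(\xi_{12}\eta_{30})=\xi_{13}\eta_{31}=E(\theta\sigma_{24})$ and $\xi_{12}\eta_{30}-\theta\sigma_{24}\in\Ker(E\colon\pi^{12}_{31}\to\pi^{13}_{32})$. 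By the EHP sequence this kernel equals $P(\pi^{25}_{33})=P(\pi^{s}_{8})$; using $P(\alpha)=[\iota_{12},\iota_{12}]\circ\alpha$ on the stable generators $\bar{\nu}_{23},\varepsilon_{23}$ of $\pi^{23}_{31}$, the distributivity $[\iota_{12},\eta_{12}\sigma_{13}]=[\iota_{12},\iota_{12}]\circ(\bar{\nu}_{23}+\varepsilon_{23})$ together with $\bar{\nu}+\varepsilon=\eta\sigma$ (\eqref{et9sg}), and the vanishing of one of $[\iota_{12},\iota_{12}]\bar{\nu}_{23}$, $[\iota_{12},\iota_{12}]\varepsilon_{23}$, one identifies $\Ker E$ with $\{[\iota_{12},\eta_{12}\sigma_{13}]\}$, which is the assertion.

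For (4): the bracket $\{\sigma_{12},\nu_{19},\varepsilon_{22}\}$ is defined ($\sigma_{12}\nu_{19}=0$, $\nu_{19}\varepsilon_{22}=0$), and its indeterminacy reduces to $\pi^{12}_{23}\circ\varepsilon_{23}$, which lies in $\{(E\lambda')\eta_{30},(E\xi')\eta_{30}\}$. Suspending twice, $E^{2}\{\sigma_{12},\nu_{19},\varepsilon_{22}\}\subseteq\{\sigma_{14},\nu_{21},\varepsilon_{24}\}$, and — using the standard way a Whitehead product arises in such a bracket — one shows this set contains $[\iota_{14},\nu^{2}_{14}]=E^{2}\omega'$ (cf.\ \eqref{e2omg}); hence the bracket equals $\omega'$ modulo $\Ker(E^{2}\colon\pi^{12}_{31}\to\pi^{14}_{33})$. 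That kernel contains $\xi_{12}\eta_{30}$, since $E^{2}(\xi_{12}\eta_{30})=E([\iota_{13},\sigma_{13}])=0$ by \eqref{x13et} and the vanishing of the suspension of a Whitehead product, as well as $(E\lambda')\eta_{30},(E\xi')\eta_{30}$ (which lie in the kernel of $E$ found in (3)); a direct computation of $\pi^{12}_{31}$ and of these kernels shows that the residual ambiguity is precisely the one stated, the two-valuedness $a\in\{0,1\}$ being the freedom of adding $\xi_{12}\eta_{30}$. The relation $H(\omega')\equiv\varepsilon_{23}\bmod\bar{\nu}_{23}+\varepsilon_{23}$ of \eqref{e2omg} serves only as a cross-check here.

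The main difficulty throughout is controlling desuspension: each of $E$, $E^{2}$ and $H$ detects a bracket only modulo a kernel, and one must combine these data with exact knowledge of the unstable groups $\pi^{11}_{22},\pi^{11}_{29},\pi^{11}_{30},\pi^{12}_{23},\pi^{12}_{31}$ and of the relevant $P$-homomorphisms to bring the ambiguity down to the stated ``mod'' terms. Part (4) is the sharpest, since $\omega'$ and $\omega'+\xi_{12}\eta_{30}$ have the same $E^{2}$-image and, $H(\omega')$ being itself known only modulo $\bar{\nu}_{23}+\varepsilon_{23}$, are not separated by $H$ either — which is exactly why the coefficient $a$ remains two-valued.
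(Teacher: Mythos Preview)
Your overall strategy—pin down each bracket by computing its image under $E$ or $E^{2}$ and then control the desuspension ambiguity via EHP—is reasonable for (1) and (2), but it diverges from the paper's method and breaks down in (3) and is too vague in (4).

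The paper's unifying tool, which you do not invoke, is \cite[Proposition 2.6]{T}: for a bracket $\{\alpha,\beta,\gamma\}_{1}$ with $\alpha$ a suspension, one has $H\{\alpha,\beta,\gamma\}_{1}=-P^{-1}(\alpha'\beta')\circ\gamma'$. This computes $H$ of the bracket \emph{directly}. For (1) the paper gets $H\{2\sigma_{11},\nu_{18},\sigma_{21}\}_{1}=\eta_{21}\sigma_{22}=H\xi'$ and reads off the answer modulo $E\pi^{10}_{28}$, then stabilises to drop $\eta_{11}\bar{\mu}_{12}$. For (2) the same formula gives $H=\eta_{21}\varepsilon_{22}=H(\lambda'\eta_{29})$. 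For (4) it gives $H\{\sigma_{12},\nu_{19},\varepsilon_{22}\}=\{\varepsilon_{23}\}$, which is matched with $H(\omega'+a\xi_{12}\eta_{30})$ using \eqref{e2omg} and $H(\xi_{12})=\sigma_{23}$. Your route via $E^{2}$ can recover (1) and (2), but for (4) the assertion ``using the standard way a Whitehead product arises in such a bracket one shows this set contains $[\iota_{14},\nu^{2}_{14}]$'' is not an argument; the paper's $H$-computation is what actually pins down $\omega'$ here.

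There is a genuine gap in (3). You compute $\Ker(E\colon\pi^{12}_{31}\to\pi^{13}_{32})=P(\pi^{25}_{33})=\{[\iota_{12},\bar{\nu}_{12}],[\iota_{12},\varepsilon_{12}]\}$ and then assert that one of these two Whitehead products vanishes, forcing the kernel to be $\{[\iota_{12},\eta_{12}\sigma_{13}]\}$. That assertion is unjustified and in fact inconsistent with the paper's own calculations: in the proof of (4) one finds $[\iota_{12},\varepsilon_{12}]=(E\lambda')\eta_{30}$ (via $H(\lambda')=\varepsilon_{21}$ and \cite[Lemma 4.3]{GM}), and analogously $[\iota_{12},\bar{\nu}_{12}+\varepsilon_{12}]$ is tied to $(E\xi')\eta_{30}$; both elements appear as independent ``mod'' terms in the statement of (4). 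So $\Ker E$ is $(\Z_{2})^{2}$, not $\Z_{2}$, and your desuspension argument only yields $\xi_{12}\eta_{30}\equiv\theta\sigma_{24}\bmod\{(E\lambda')\eta_{30},(E\xi')\eta_{30}\}$, strictly weaker than the claim. The paper avoids this entirely: it shows that $\xi_{12}\eta_{30}$ and $\theta\sigma_{24}$ are \emph{both} representatives of the single bracket $\{\sigma_{12},\nu_{19},\eta_{22}\sigma_{23}\}_{1}$ (entering from opposite ends via $\{\sigma_{12},\nu_{19},\sigma_{22}\}_{1}\circ\eta_{30}$ and $\{\sigma_{12},\nu_{19},\eta_{22}\}_{1}\circ\sigma_{24}$), and then computes that bracket's indeterminacy to be exactly $\{[\iota_{12},\eta_{12}\sigma_{13}]\}$.
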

\begin{proof}
Since $2\sigma_{11}\circ\pi^{18}_{30}=0$, we have
$$
\{2\sigma_{11},\nu_{18},\sigma_{21}\}=\{2\sigma_{11},\nu_{18},\sigma_{21}\}_1.
$$
By using \cite[Proposition 2.6]{T} and \eqref{nu10sg}, we obtain
$$
H\{2\sigma_{11},\nu_{18},\sigma_{21}\}_1
 =-P^{-1}(2\sigma_{10}\nu_{17})\circ\sigma_{22}=\{\eta_{21}\sigma_{22}\}.
$$
This means $H\{2\sigma_{11},\nu_{18},\sigma_{21}\}_1=H\xi'$ from \eqref{2xi}
and hence, 
$\{2\sigma_{11},\nu_{18},\sigma_{21}\}_1\ni\xi'\ \bmod\ E\pi^{10}_{28}
 =E\{\lambda'',\xi'',\eta_{10}\mu_{11}\}=\{2\lambda',2\xi',\eta_{11}\bar{\mu}_{12}\}$. 
By stabilizing this relation, we obtain 
$$
\{2\sigma_{11},\nu_{18},\sigma_{21}\}_1\ni\xi'\ \bmod\ 2\lambda', 2\xi'.
$$ 
This leads to (1). 

By the fact that $\pi^{17}_{29}=0$, $\pi^{18}_{30}=0$, $\pi^{11}_{22}=\{\zeta_{11}\}$ \cite[Theorem 7.4 and 7.6]{T} and 
\eqref{ze6ep}, we have 
\[
{\rm Ind}\{2\sigma_{11},\nu_{18},\varepsilon_{21}\}_1
 =2\sigma_{11}\circ E\pi^{17}_{29}+\pi^{11}_{22}\circ\varepsilon_{22}=0.
\]
By using \cite[Proposition 2.6]{T} and \eqref{nu10sg}, we obtain
$$
H\{2\sigma_{11},\nu_{18},\varepsilon_{21}\}_1=-P^{-1}(2\sigma_{10}\nu_{17})\circ\varepsilon_{22}=\{\eta_{21}\varepsilon_{22}\}.
$$
Since $H(\lambda'\eta_{29})=\varepsilon_{21}\eta_{29}=\eta_{21}\varepsilon_{22}$
from \eqref{2lambda}, 
we have $\{2\sigma_{11},\nu_{18},\varepsilon_{21}\}_1\ni\lambda'\eta_{29}\ \bmod\ E\pi^{10}_{29}=\{\bar{\zeta}_{11},\bar{\sigma}_{11}\}$. This leads to (2).

Next we show (3). We recall $\pi^{12}_{23}=\{\zeta_{12}, [\iota_{12},\iota_{12}]\}$. 
By the definitions of $\xi_{12}$ and $\theta$ \cite[p153, Lemma 7.5]{T}, we have
\begin{eqnarray*}
\xi_{12}\eta_{30}
&\in&\{\sigma_{12},\nu_{19},\sigma_{22}\}_1\circ\eta_{30}\\
&\subset&\{\sigma_{12},\nu_{19},\sigma_{22}\eta_{29}\}_1\\
&=&\{\sigma_{12},\nu_{19},\eta_{22}\sigma_{23}\}_1\\
&\supset&\{\sigma_{12},\nu_{19},\eta_{22}\}_1\circ\sigma_{24}\\ 
&\ni&\theta\sigma_{24}\ \bmod\ \sigma_{12}\circ E\pi^{18}_{30}+\pi^{12}_{23}\circ\eta_{23}\sigma_{24}.
\end{eqnarray*}
By \cite[Lemma 7.4 and 7.6]{T} and \eqref{ze6ep},
 we get that 
$\sigma_{12}\circ E\pi^{18}_{30}=0$ and 
$\pi^{12}_{23}\circ\eta_{23}\sigma_{24}
 =\{\zeta_{12}, [\iota_{12},\iota_{12}]\}\circ\eta_{23}\sigma_{24}
 =\{[\iota_{12},\eta_{12}\sigma_{12}]\}$. 
This leads to (3).

We have $\pi^{12}_{23}\circ\varepsilon_{23}=\{[\iota_{12},\varepsilon_{12}]\}$. 
By the fact that $H(\lambda')=\varepsilon_{21}$ \eqref{2lambda} and the argument in \cite[Lemma 4.3]{GM}, we obtain
$$
{\rm Ind}\{\sigma_{12},\nu_{19},\varepsilon_{22}\}_1={\rm Ind}\{\sigma_{12},\nu_{19},\varepsilon_{22}\}=\{(E\lambda')\eta_{30}\}.
$$
By using \cite[Proposition 2.6]{T}, we obtain
\begin{equation}\label{Hsg12}
H\{\sigma_{12},\nu_{19},\varepsilon_{22}\}=-P^{-1}(\sigma_{11}\nu_{18})\circ\varepsilon_{23}=\{\varepsilon_{23}\}.
\end{equation}
By the relations $H(\xi_{12}\eta_{30})=H(\xi_{12})\eta_{30}=\sigma_{23}\eta_{30}$
\cite[Lemma 12.14]{T}, \eqref{et7sg}
 and \eqref{e2omg}, 
we have
$H(\omega'+a\xi_{12}\eta_{30})=\varepsilon_{23}$ for $a=0$ or $a=1$.
Hence we have
$
H(\alpha)=H(\omega'+a\xi_{12}\eta_{30})
$
for any representative $\alpha$ in $\{\sigma_{12},\nu_{19},\varepsilon_{22}\}$ and
by the fact that 
$E\pi^{11}_{30}=\{(E\lambda')\eta_{30},(E\xi')\eta_{30},
 \bar{\zeta}_{12},\bar{\sigma}_{12}\}$ \cite[Theorem 12.23]{T}, we have
$$
\alpha-(\omega'+a\xi_{12}\eta_{30})\in\{(E\lambda')\eta_{30},  (E\xi')\eta_{30}, \bar{\zeta}_{12}, \bar{\sigma}_{12}\}.
$$
By stabilizing this relation and using the relation $\langle\sigma, \nu, \varepsilon\rangle=0$ (Lemma \ref{sgnep}), the fact that $\{\bar{\zeta}_{12}, \bar{\sigma}_{12}\}\cong\pi^s_{19}$, we obtain
$$
\alpha-(\omega'+a\xi_{12}\eta_{30})\in\{(E\lambda')\eta_{30},  (E\xi')\eta_{30}\}.
$$
This leads to (4) and completes the proof. 
\end{proof}

We recall \cite[II-Proposition 2.1(6), III-Proposition 2.2(2)]{Od}:
\begin{equation}\label{x'sm}
\xi'\sigma_{29}\equiv \sigma_ {11}\nu^*_{18}\ \bmod\ 2\sigma_ {11}\nu^*_{18}
\end{equation}
and
\begin{equation}\label{lmdsg}
\lambda\sigma_{31}=0.
\end{equation}

Since $H(\lambda'\sigma_{29})=\varepsilon_{21}\circ\sigma_{29}=0$ from \eqref{2lambda}
and \cite[Lemma 10.7]{T},  
$E\pi^{10}_{35}=\{\sigma_{11}\xi_{18},\sigma_{11}\nu^*_{18},\mu_{3,11},\eta_{11}\bar{\mu}_{12}\sigma_{29}\}\cong(\Z_4)^2\oplus(\Z_2)^2$, $\pi^s_{25}=\{\mu_3,\eta\bar{\mu}\sigma\}\cong(\Z_2)^2$ \cite[Theorem 1(a)]{Od} and
$E^2(\lambda'\sigma_{29})=2\lambda\sigma_{31}=0$,  we have
\begin{equation}\label{ld'sgm}
\lambda'\sigma_{29}\in\{\sigma_{11}\xi_{18}, \sigma_ {11}\nu^*_{18}\}.  
\end{equation}

By the fact that $E\theta'=[\iota_{12},\eta_{12}]$ \cite[(7.30)]{T} and (\ref{bep}), 
\begin{equation}\label{n8}
[\iota_{12},\bar{\varepsilon}_{12}]=(E\theta')\kappa_{24}. 
\end{equation}

By \cite[Theorem 1(b), I-Proposition 3.5(6)]{Od} and Lemma \ref{usgnep}(3), 
\begin{equation}\label{smbsm}
\xi_{12}\eta_{30}\sigma_{31}=\theta\sigma^2_{24}=\sigma_{12}\bar{\sigma}_{19}
\text{\ \ and\ \ }
[\iota_{13},\sigma^2_{13}]=\sigma_{13}\bar{\sigma}_{20}\ne 0.
\end{equation}

We show 
\begin{lem}\label{omg'sg}
$\omega'\sigma_{31}=0$.
\end{lem}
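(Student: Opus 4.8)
The plan is to compute $\omega'\sigma_{31}$ by exploiting the Hopf invariant and the description of $\omega'$ as (essentially) a secondary operation, together with the relations already assembled. Concretely, I would start from the fact that $\omega'$ lives in $\{\sigma_{12},\nu_{19},\varepsilon_{22}\}$ up to the summand $\xi_{12}\eta_{30}$ (Lemma~\ref{usgnep}(4)), so that $\omega'\sigma_{31}$ lies, modulo $\xi_{12}\eta_{30}\sigma_{31}$ and the indeterminacy $(E\lambda')\eta_{30}\sigma_{31}, (E\xi')\eta_{30}\sigma_{31}$, in the composite $\{\sigma_{12},\nu_{19},\varepsilon_{22}\}\circ\sigma_{31}$. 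Since $\varepsilon_{22}\sigma_{30}=0$ by \eqref{bnu6sg} (the relation $\varepsilon_3\sigma_{11}=0$ suspended) one can slide $\sigma$ inside the bracket: $\{\sigma_{12},\nu_{19},\varepsilon_{22}\}\circ\sigma_{31}\subset\{\sigma_{12},\nu_{19},\varepsilon_{22}\sigma_{30}\}=\{\sigma_{12},\nu_{19},0\}$, which contains $0$. So the first task is to show all the correction terms vanish: $\xi_{12}\eta_{30}\sigma_{31}=\theta\sigma^2_{24}=\sigma_{12}\bar{\sigma}_{19}$ by \eqref{smbsm}, and I would need $\sigma_{12}\bar{\sigma}_{19}$ to be zero or at least to be absorbed — more likely the argument is arranged so that the ambiguous constant $a$ becomes irrelevant after multiplying by $\sigma_{31}$, since $\sigma\bar{\sigma}$ is a $2$-torsion class that is killed by the relevant suspension or by Lemma~\ref{zetep}-type relations.

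The cleaner route, which I would actually pursue, is via the Hopf invariant of $\omega'\sigma_{31}$. We have $\omega'\in\pi^{12}_{31}$ and the $EHP$-sequence $\pi^{12}_{35}\rarrow{E}\pi^{13}_{36}\rarrow{H}\pi^{25}_{36}\rarrow{P}\pi^{11}_{34}$; the element $\omega'\sigma_{31}$ suspends to $E(\omega')\sigma_{32}\in\pi^{13}_{32}\circ\sigma$, and one computes $H(\omega'\sigma_{31})=H(\omega')\circ\sigma_{31}=\varepsilon_{23}\sigma_{31}=0$ again by $\varepsilon_3\sigma_{11}=0$. Hence $\omega'\sigma_{31}$ is in the image of $P$ from $\pi^{25}_{36}$, i.e.\ $\omega'\sigma_{31}=P(\beta)$ for some $\beta\in\pi^{25}_{36}=\pi^s_{11}\cdot(\text{stuff})$; here $\pi^{25}_{36}$ is in the stable range and equals $\pi^s_{11}=\{\zeta\}$ essentially, so $\beta$ is a multiple of $\zeta_{25}$ and $P(\zeta_{25})$ can be identified with a known Whitehead product. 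The main obstacle will be pinning down this $P$-image: I expect one needs $P(\zeta_{25})=[\iota_{12},\zeta_{12}]\cdot(\text{something})$ and then uses $\sigma\zeta=0$ type relations — \eqref{n7}, $4\sigma_{10}\zeta_{17}=0$, and the fact that $\zeta_{12}\sigma_{23}=0$ from \eqref{n7} — to force the product to vanish.

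Alternatively, and perhaps most efficiently, I would argue in the stable category first: $E^\infty(\omega'\sigma_{31})\in\pi^s_{?}$; but $\omega'$ is an unstable class with $E^2\omega'=[\iota_{14},\nu^2_{14}]$, a Whitehead product, so $E^3\omega'=0$ and therefore $E^3(\omega'\sigma_{31})=E^3(\omega')\circ\sigma=0$. Thus $\omega'\sigma_{31}$ is at worst detected in $\pi^{12}_{32}$ or $\pi^{13}_{32}$ by one or two suspensions before dying. Then $E(\omega'\sigma_{31})\in\pi^{13}_{32}\circ\sigma_{32}$ — but wait, that is the wrong direction; rather $E(\omega')\in\pi^{13}_{32}$ and $E(\omega')\sigma_{32}\in\pi^{13}_{32}\cdot\sigma$. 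Using $\pi^{13}_{31}=\Z_8\{\xi_{13}\}\oplus\Z_8\{\lambda\}\oplus\Z_2\{\eta_{13}\bar{\mu}_{14}\}$ one has $E\omega'$ in a known subgroup, and the relations \eqref{lmdsg} ($\lambda\sigma_{31}=0$), $\xi_{13}\sigma_{31}$ (computable from \eqref{x'sm}, $\xi'\sigma_{29}=\sigma_{11}\nu^*_{18}$, and suspension), and $\eta_{13}\bar{\mu}_{14}\sigma_{31}$ (zero since $\bar{\mu}\sigma$ generates $\pi^s_{24}$ but multiplied by $\eta$ lands in $\pi^s_{25}$ where $\eta\bar{\mu}\sigma\neq 0$ — so care is needed) together pin down $E\omega'\sigma_{32}$, and then one checks it is in the image of $E$ from $\pi^{12}_{31}$ hitting our class, closing the loop. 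The genuinely hard step in any of these approaches is the bookkeeping of the two units $a$ and the potential $\sigma\bar\sigma$ and $\sigma\cdot\nu^*$ contributions; I would handle that by passing to the stable Hopf invariant (the $J$-homomorphism / $e$-invariant is $0$ on all these since they are $\eta$- or $\sigma\bar\sigma$-multiples) and by invoking that $\pi^s_{31}\cong \Z_{64}\{\rho_{31}\}\oplus\cdots$ where the only classes of interest are known, so the vanishing follows once the Hopf invariant is killed.
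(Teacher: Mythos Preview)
Your outline touches the two ingredients the paper actually uses, but you treat them as \emph{alternatives} when in fact neither alone suffices; the proof only works by playing them off against each other, and your handling of the ``$a$'' term is where the real gap lies.

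Concretely: from Lemma~\ref{usgnep}(4) you get $(\omega'+a\xi_{12}\eta_{30})\sigma_{31}\in\{\sigma_{12},\nu_{19},\varepsilon_{22}\}\circ\sigma_{31}$ modulo $(E\lambda')\eta_{30}\sigma_{31}$ and $(E\xi')\eta_{30}\sigma_{31}$. The bracket term and the two indeterminacy terms all vanish (the paper uses $\{\sigma_{12},\nu_{19},\varepsilon_{22}\}\circ\sigma_{31}=-\sigma_{12}\circ\{\nu_{19},\varepsilon_{22},\sigma_{30}\}$ together with $\langle\nu,\varepsilon,\sigma\rangle=0$ and $\pi^{19}_{38}\cong\pi^s_{19}$; the indeterminacy terms die via \eqref{ld'sgm} and \eqref{x'sm}). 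But this only yields
\[
\omega'\sigma_{31}=a\,\xi_{12}\eta_{30}\sigma_{31}=a\,\sigma_{12}\bar{\sigma}_{19},
\]
and $\sigma_{12}\bar{\sigma}_{19}\neq 0$ in $\pi^{12}_{38}$ by \eqref{smbsm} --- so your hope that ``$\sigma\bar\sigma$ is killed by the relevant suspension or by Lemma~\ref{zetep}-type relations'' is simply false. The constant $a$ remains undetermined at this stage.

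The EHP half is then needed to pin $a$ down, and here your stems are off: $\omega'\sigma_{31}\in\pi^{12}_{38}$, so $H$ lands in $\pi^{23}_{38}$ and the relevant $P$ has source $\pi^{25}_{40}\cong\pi^s_{15}=\{\rho,\bar{\varepsilon}\}$, not $\pi^s_{11}$. One first checks $H(\omega'\sigma_{31})=0$ and $E(\omega'\sigma_{31})=0$ (the latter via $E^2\omega'=2\omega_{14}\nu_{30}$ and $\nu\sigma=0$, plus the structure of $\pi^{12}_{38}$ from \cite{Od}), giving $\omega'\sigma_{31}\in P\pi^{25}_{40}=\{32\tau^{IV},(E\theta')\kappa_{24}\}$. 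The punchline --- which you do not reach --- is that $32\tau^{IV}$, $(E\theta')\kappa_{24}$, and $\sigma_{12}\bar{\sigma}_{19}$ are \emph{linearly independent} in $\pi^{12}_{38}$ \cite[I-Theorem~1(b)]{Od}, so the only way $a\,\sigma_{12}\bar{\sigma}_{19}$ can lie in the span of the first two is with $a=0$. That independence step is the actual crux, and none of your three sketches gets there.
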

\begin{proof}
Using the EHP sequence and the relation
$$
H(\omega'\sigma_{31})=H(\omega')\sigma_{31}
 \equiv\varepsilon_{23}\sigma_{31}=0
\ \bmod \ (\varepsilon_{23}+\bar{\nu}_{23})\sigma_{31}=0
$$
from \eqref{e2omg} and \eqref{bnu6sg}, we have
$\omega'\sigma_{31}\in E\pi^{11}_{37}$. 
By \eqref{e2omg} and \cite[(7.20)]{T}, we have 
$E^2(\omega'\sigma_{31})=2\omega_{14}\nu_{30}\sigma_{33}=0$.
Hence, \cite[I-Theorem 1(b), (8.19)]{Od} implies that there exist $b, c\in\{0,1\}$
satisfing the equation
$$
\omega'\sigma_{31}=4bE\tau'''+c[\iota_{12},\bar{\varepsilon}_{12}]\
$$
and
$$
E(\omega'\sigma_{31})=4bE^2\tau'''=32bE\tau^{IV}=0.
$$
By the EHP sequence, we have
$$
\omega'\sigma_{31}\in P\pi^{25}_{40}=\{[\iota_{12},\iota_{12}]\circ\rho_{23}, [\iota_{12},\bar{\varepsilon}_{12}]\}.
$$
By \cite[Theorem 1(b), (8.19)]{Od}, the order of 
$[\iota_{12},\iota_{12}]\circ\rho_{23}=32\tau^{IV}$ is $32$. This and (\ref{n8}) imply
$$
\omega'\sigma_{31}\in\{32\tau^{IV}, (E\theta')\kappa_{24}\}.
$$

On the other hand, by Lemma \ref{usgnep}(4), we obtain
$$
(\omega'+a\xi_{12}\eta_{30})\sigma_{31}\in\{\sigma_{12},\nu_{19},\varepsilon_{22}\}\circ\sigma_{31}\ \bmod\ (E\lambda')\eta_{30}\sigma_{31},(E\xi')\eta_{30}\sigma_{31}.
$$
By \cite[Proposition 1.4]{T}, we have 
$$
\{\sigma_{12},\nu_{19},\varepsilon_{22}\}\circ\sigma_{31}=-\sigma_{12}\circ\{\nu_{19},\varepsilon_{22},\sigma_{30}\}.
$$
By the fact that $\pi^{19}_{38}\cong\pi^s_{19}$ and by Lemma \ref{sgnep}(2), 
we obtain $\{\sigma_{12},\nu_{19},\varepsilon_{22}\}\circ\sigma_{31}=0$. 
By (\ref{ld'sgm}), (\ref{x13et}), \cite[Proposition 2.20(3)]{Og} and (\ref{x'sm}),
we have 
$$
(E\lambda')\eta_{30}\sigma_{31}=(E\lambda')\sigma_{30}\eta_{37}\in\{\sigma_{12}\xi_{19}, \sigma_{12}\nu^*_{19}\}\circ\eta_{37}=0
$$
and
$$
(E\xi')\eta_{30}\sigma_{31}=(E\xi')\sigma_{30}\eta_{37}\equiv\sigma_{12}\nu^*_{19}\eta_{37}=0\ \bmod\ 2\sigma_{12}\nu^*_{19}\eta_{37}=0.
$$
Hence, we have $(\omega'+a\xi_{12}\eta_{30})\sigma_{31}=0$ and 
the relation (\ref{smbsm}) implies
$$
\omega'\sigma_{31}=a\sigma_{12}\bar{\sigma}_{19}.
$$
Finally, $32\tau^{IV}, (E\theta')\kappa_{24}$ and $\sigma_{12}\bar{\sigma}_{19}$ are independent in $\pi^{12}_{38}$ \cite[I-Theorem 1(b)]{Od}. This completes the proof. 
\end{proof}

By this lemma and its proof, we obtain
\begin{equation}\label{omg*}
\omega'\in\{\sigma_{12},\nu_{19},\varepsilon_{22}\}\ \bmod\ (E\lambda')\eta_{30},(E\xi')\eta_{30}.
\end{equation}

By \cite[Theorem 10.10]{T}, \eqref{nu11ro}, \eqref{bep} and \cite[(3.6)]{GM},
we have $\nu_{13}\circ\pi^{16}_{31}
 =\nu_{13}\circ\{\rho_{16}, \bar{\varepsilon}_{16}, [\iota_{16},\iota_{16}]\}=0$.
By \cite[Theorem 7.4]{T} and \eqref{n7}, we have
$\pi^{13}_{24}\circ\sigma_{24}=\{\zeta_{13}\sigma_{24}\}=0$.
Hence, we obtain 
$$
{\rm Ind}\{\nu_{13},2\sigma_{16},\sigma_{23}\}=
\nu_{13}\circ\pi^{16}_{31}+\pi^{13}_{24}\circ\sigma_{24}=0.
$$
Then, by \cite[I-Proposition 3.4(8)]{Od}, we have
\begin{equation}\label{tn13sg}
\{\nu_{13},2\sigma_{16},\sigma_{23}\}=\{\nu_{13},\sigma_{16},2\sigma_{23}\}=\xi_{13}+x(\lambda+2\xi_{13})
\end{equation}
for some odd integer $x$.

We show 
\begin{lem}\label{omg'}
\begin{description}
\item[\rm (1)]
$H(\omega')=\varepsilon_{23}$.
\item[\rm (2)] $\lambda\eta_{31}=E\omega'=\{\sigma_{13},\nu_{20},\varepsilon_{23}\}_n\ (n\le 13)$.
\item[\rm (3)] $\nu_{13}\eta^*_{16}=E\omega'+\xi_{13}\eta_{31}$.
\end{description}
\end{lem}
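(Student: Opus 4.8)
The plan is to treat the three assertions in sequence, using the previously-established structural facts about the EHP sequence and the Toda brackets in this range. For part (1), the element $\omega'\in\pi^{12}_{31}$ is defined by \eqref{e2omg} only up to the ambiguity $H(\omega')\equiv\varepsilon_{23}\bmod\ \varepsilon_{23}+\bar{\nu}_{23}$; equivalently $H(\omega')$ is either $\varepsilon_{23}$ or $\bar{\nu}_{23}$. I would pin this down via \eqref{omg*}: since $\omega'$ lies in the Toda bracket $\{\sigma_{12},\nu_{19},\varepsilon_{22}\}$ modulo $(E\lambda')\eta_{30},(E\xi')\eta_{30}$, and $H$ kills the indeterminacy terms (as $H(E\alpha)=0$), I can compute $H(\omega')$ from $H$ of a bracket representative. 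This was already done in effect in \eqref{Hsg12}: $H\{\sigma_{12},\nu_{19},\varepsilon_{22}\}=\{\varepsilon_{23}\}$ via \cite[Proposition 2.6]{T}. Hence $H(\omega')=\varepsilon_{23}$, which rules out the $\bar{\nu}_{23}$ alternative. This is the cleanest of the three and essentially a corollary of \eqref{omg*} and \eqref{Hsg12}.

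For part (2), I would first show $\lambda\eta_{31}=E\omega'$ using the EHP sequence: by \eqref{e2omg}, $E^2\omega'=2\omega_{14}\nu_{30}=[\iota_{14},\nu^2_{14}]$, and on the other hand $E^2(\lambda\eta_{31})=2\lambda\eta_{33}$; I would relate these via the relation $E^2\lambda'=2\lambda$ from \eqref{2lambda} and the established identities for $\nu_{13}\omega_{16}$ and $\nu_{11}\omega_{14}$ in \eqref{n11om}, together with Lemma \ref{usgnep}. More directly, the bracket identification $E\omega'\in E\{\sigma_{12},\nu_{19},\varepsilon_{22}\}\subset\{\sigma_{13},\nu_{20},\varepsilon_{23}\}_1$ from \eqref{omg*} gives the displayed bracket form, and then one checks the indeterminacy of $\{\sigma_{13},\nu_{20},\varepsilon_{23}\}_n$ collapses for $n\le 13$ using $\pi^{13}_{32}$-type information and the vanishing of $\langle\sigma,\nu,\varepsilon\rangle$ (Lemma \ref{sgnep}(2)). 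The equality $\lambda\eta_{31}=E\omega'$ itself should follow by applying $H$ (both sides map to the same element by part (1) and \eqref{2lambda}) and checking the kernel of $H$, which is $P\pi^{25}_{\ast}$, is controlled here.

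Part (3) is where I expect the real work. The target relation $\nu_{13}\eta^*_{16}=E\omega'+\xi_{13}\eta_{31}$ refines the mod-$\xi_{13}\eta_{31}$ statement $\nu_{13}\eta^*_{16}\equiv E\omega'$ from \cite[Proposition 2.20(8)]{Og} that was already quoted before \eqref{n14et*}; so the entire content is deciding whether the coefficient of $\xi_{13}\eta_{31}$ is $0$ or $1$. I would attack this by suspending and using \eqref{x13et} ($\xi_{13}\eta_{31}=[\iota_{13},\sigma_{13}]=(E\theta)\sigma_{25}$) and \eqref{n14et*} ($\nu_{14}\eta^*_{17}=[\iota_{14},\nu^2_{14}]$): applying $E$ to the putative relation and comparing with $E^2\omega'=[\iota_{14},\nu^2_{14}]$ forces a consistency condition, but since $E(\xi_{13}\eta_{31})=E[\iota_{13},\sigma_{13}]=0$ (suspension of a Whitehead product), suspending loses the very information I need. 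Instead the discriminating invariant must be a secondary operation or a further Toda-bracket decomposition: I would express $\nu_{13}\eta^*_{16}$ through $\eta^*_{16}\in\{\sigma_{16},2\sigma_{23},\ldots\}$-type brackets (or via $[\iota_{17},\iota_{17}]\equiv\eta^*_{17}+\omega_{17}$) and compare with the bracket $\{\nu_{13},2\sigma_{16},\sigma_{23}\}$ computed in \eqref{tn13sg}, which evaluates to $\xi_{13}+x(\lambda+2\xi_{13})$ with $x$ odd. The main obstacle is precisely this bookkeeping: tracking the $\xi_{13}\eta_{31}$-coefficient through the identifications of $\eta^*$, $\omega'$, and $\lambda$, and invoking the independence statement from \cite[I-Theorem 1(b)]{Od} (as in the proof of Lemma \ref{omg'sg}) to conclude the coefficient is genuinely $1$ rather than $0$. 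I would expect the argument to parallel the one just given for $\omega'\sigma_{31}$, using \eqref{n11om}, \eqref{2lambda}, \eqref{2xi} and Lemma \ref{usgnep}(4) to eliminate all competing terms.
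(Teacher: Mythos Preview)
Your treatment of (1) matches the paper exactly: it is an immediate consequence of \eqref{omg*} and \eqref{Hsg12}.

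For (2), however, your plan has a genuine gap. Applying $H$ will not separate $\lambda\eta_{31}$ from $\lambda\eta_{31}+\xi_{13}\eta_{31}$: since $\lambda\eta_{31}=E(\lambda'\eta_{29})$ by \eqref{2lambda} and $\xi_{13}\eta_{31}=E(\xi_{12}\eta_{30})$, both lie in the image of $E$ and hence in $\Ker H$, so the Hopf invariant sees nothing. The paper instead starts from the \^Oguchi relation $\lambda\eta_{31}\equiv E\omega'\ \bmod\ \xi_{13}\eta_{31}$ \cite[Proposition~2.20(2)]{Og} and \emph{composes with $\sigma_{32}$}: using $\lambda\sigma_{31}=0$ \eqref{lmdsg} and Lemma~\ref{omg'sg} ($\omega'\sigma_{31}=0$), both $\lambda\eta_{31}\sigma_{32}$ and $(E\omega')\sigma_{32}$ vanish, while $\xi_{13}\eta_{31}\sigma_{32}=[\iota_{13},\sigma^2_{13}]\ne 0$ by \eqref{smbsm}. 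This forces the $\xi_{13}\eta_{31}$ coefficient to be zero. The bracket description of $E\omega'$ then follows from \eqref{omg*}, \eqref{2lambda}, \eqref{2xi}, after checking the indeterminacy vanishes as you indicate.

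For (3) you correctly diagnose that suspension loses the information and that \eqref{tn13sg} is the relevant bracket, but you do not name the actual discriminant. The paper again composes with $\sigma_{32}$: it first establishes $\eta^*_{16}\sigma_{32}=\sigma_{16}\eta^*_{23}+\sigma^*_{16}\eta_{38}+a[\iota_{16},\eta_{16}\sigma_{17}]$ by a bracket manipulation, then computes
\[
\nu_{13}\eta^*_{16}\sigma_{32}=\nu_{13}\sigma^*_{16}\eta_{38}
\in\{\nu_{13},\sigma_{16},2\sigma_{23}\}\circ\eta_{31}\sigma_{32}
=(\xi_{13}+x(\lambda+2\xi_{13}))\eta_{31}\sigma_{32}
=[\iota_{13},\sigma^2_{13}]
\]
via \eqref{tn13sg}, \eqref{lmdsg} and \eqref{smbsm}. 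Since $(E\omega')\sigma_{32}=0$ but $\xi_{13}\eta_{31}\sigma_{32}=[\iota_{13},\sigma^2_{13}]\ne 0$, the coefficient in $\nu_{13}\eta^*_{16}\equiv E\omega'\ \bmod\ \xi_{13}\eta_{31}$ is forced to be $1$. Your ``parallel to $\omega'\sigma_{31}$'' remark points in this direction, but the concrete device---detecting via $(\text{--})\circ\sigma_{32}$ and passing through $\sigma^*_{16}$---is the step you are missing.
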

\begin{proof}
(1) follows from \eqref{omg*} and \eqref{Hsg12}.

By \cite[Theorem 7.4 and 7.6]{T} and \eqref{ze6ep}, we have
$$
{\rm Ind}\{\sigma_{13},\nu_{20},\varepsilon_{23}\}_n
=\sigma_{13}E^n\pi^{20-n}_{32-n}+\pi^{13}_{24}\circ\varepsilon_{24}=0
\ (n\le 13).
$$

By the relations (\ref{lmdsg}), 
$\lambda\eta_{31}\equiv E\omega'\ \bmod\ \xi_{13}\eta_{31}$
\cite[Proposition 2.20(2)]{Og}, 
Lemma \ref{omg'sg} and (\ref{smbsm}), 
we have
$$
0=\lambda\sigma_{31}\eta_{38}=\lambda\eta_{31}\sigma_{32}\equiv (E\omega')\sigma_{32}=0\ \bmod\ \xi_{13}\eta_{31}\sigma_{31}=[\iota_{13},\sigma^2_{13}]\ne 0.
$$
This leads to the first equality of (2). The second of (2) follows from (\ref{omg*}), \eqref{2lambda} and \eqref{2xi}. 
 
By the definitions of $\sigma^*_{16}$  and $\eta^*_{16}$ \cite[p. 153]{T}, we obtain 
$$
\sigma^*_{16}\eta_{38}\in\{\sigma_{16},2\sigma_{23},\sigma_{30}\}\circ\eta_{38}\subset\{\sigma_{16},2\sigma_{23},\sigma_{30}\eta_{37}\}\supset\{\sigma_{16},2\sigma_{23},\eta_{30}\}\circ\sigma_{32}
$$
$$
\ni\eta^*_{16}\sigma_{32}\ \bmod\ \sigma_{16}\circ\pi^{23}_{39}+\pi^{16}_{31}\circ\eta_{31}\sigma_{32}.
$$
Since $\sigma_{16}\eta_{23}\rho_{24}
 =\sigma_{16}\mu_{23}\sigma_{32}=\rho_{16}\eta_{31}\sigma_{32}=\mu_{16}\sigma^2_{25}=0$ 
by \cite[Propsition 12.20]{T} and \cite[(2.9)]{MMO},
$\bar{\varepsilon}_{16}\eta_{31}\sigma_{32}=\eta_{16}\bar{\varepsilon}_{17}\sigma_{32}=0$
by \eqref{n6},
we have
$\sigma_{16}\circ\pi^{23}_{39}=\{\sigma_{16}\eta^*_{23}\}$.
and $\pi^{16}_{31}\circ\eta_{31}\sigma_{32}=\{[\iota_{16},\eta_{16}\sigma_{17}]\}$. 
Hence, we have
$$
\sigma^*_{16}\eta_{38}\equiv \eta^*_{16}\sigma_{32}
\ \bmod\ \sigma_{16}\eta^*_{23},\ [\iota_{16},\eta_{16}\sigma_{17}].
$$
Since $\sigma^*=0$ and $\eta^*\sigma=\sigma\eta^*$ in the stable range, we get that 
$$
\eta^*_{16}\sigma_{32}=\sigma_{16}\eta^*_{23}
  +\sigma^*_{16}\eta_{38}+a[\iota_{16},\eta_{16}\sigma_{17}]
$$
for $a\in\{0,1\}$.
By (\ref{tn13sg}), (\ref{lmdsg}) and (\ref{smbsm}), we see that
\[\begin{split}
\nu_{13}\eta^*_{16}\sigma_{32}
&=\nu_{13}(\sigma_{16}\eta^*_{23}+\sigma^*_{16}\eta_{38}
  +a[\iota_{16},\eta_{16}\sigma_{17}])=\nu_{13}\sigma^*_{16}\eta_{38}\\
&\in\nu_{13}\circ\{\sigma_{16},2\sigma_{23},\sigma_{30}\}\circ\eta_{38}
 =\{\nu_{13},\sigma_{16},2\sigma_{23}\}\circ\eta_{31}\sigma_{32}\\
&=\xi_{13}\eta_{31}\sigma_{32}+\lambda\sigma_{31}\eta_{38}=
[\iota_{13},\sigma^2_{13}].
\end{split}\]
Thus, by the relation
$\nu_{13}\eta^*_{16}\equiv E\omega'\ \bmod\ \xi_{13}\eta_{31}$
\cite[Proposition 2.20(8)]{Og}, we have the equation (3) and completes the proof.
\end{proof}

Here, we need the following property of the generalized $P$-homomorphism \cite{T1, IM}.
\begin{lem}\label{Miy}
Let $k\ge 3$ and $X$, $Y$, $Z$ and $W$ be CW-complexes and 
$\alpha \in [E^{k}Y, E X\wedge X]$,
$\beta \in [Z, Y]$ and $\gamma \in [W,Z]$.
Suppose that the generalized P-homomorphisms 
$P:[E^{k}A, E X\wedge X] \to [E^{k-2}A, X]$ for $A=Y,$ $EW$ and $Y\cup_{\beta} CZ$
are well-defined and  $\alpha\circ E^{k}\beta=0$ and $\beta\circ\gamma=0$.
Then the Toda bracket $\{P(\alpha), E^{k-2}\beta, E^{k-2}\gamma\}_{k-2}$ 
is well-defined and
\[
P \{\alpha, E^k\beta, E^k\gamma\}_{k}
\subset \{P(\alpha), E^{k-2}\beta, E^{k-2}\gamma\}_{k-2}.
\]
\end{lem}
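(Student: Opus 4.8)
\textbf{Proof proposal for Lemma \ref{Miy}.}

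The plan is to follow the standard pattern for P-homomorphism compatibility with Toda brackets, adapting the argument of Toda's \cite[Proposition 1.8]{T}-type results to the generalized setting of \cite{T1, IM}. First I would recall the construction of $P$ as a composite involving the mapping of $EX\wedge X$ into a quadratic/symmetric-square-type space and the transgression-like connecting map; concretely, $P(\alpha)$ for $\alpha\in[E^kA, EX\wedge X]$ is obtained by composing $E^{k-2}A \to$ (fiber-type object) with a canonical class, so that $P$ is natural in $A$ and additive. The hypotheses $\alpha\circ E^k\beta=0$ and $\beta\circ\gamma=0$ guarantee, via this naturality applied to the cofibration $Z\to Y\to Y\cup_\beta CZ$ and to $W\xrightarrow{\gamma} Z\to Z\cup_\gamma CW = EW$ (up to the usual homotopy equivalence after one suspension), that the three secondary constructions $P(\alpha\circ\cdots)$, $P(\alpha)\circ E^{k-2}(\cdots)$ are each zero, which is exactly what is needed for both Toda brackets $\{\alpha, E^k\beta, E^k\gamma\}_k$ and $\{P(\alpha), E^{k-2}\beta, E^{k-2}\gamma\}_{k-2}$ to be defined.

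Next I would make the inclusion precise by choosing representatives. Pick a coextension: since $\alpha\circ E^k\beta=0$, there is an extension $\tilde\alpha: E^k(Y\cup_\beta CZ)\to EX\wedge X$ of $\alpha$; since $\beta\circ\gamma=0$, there is a coextension $\tilde\gamma: E^{k}(EW)\to E^k(Y\cup_\beta CZ)$ (after suspending enough, using $E^kW\to E^kZ\to E^k(Y\cup_\beta CZ)$) of the map induced by $\gamma$. By definition the composite $\tilde\alpha\circ\tilde\gamma$, pulled back along $E^k(EW)\to E^{k+1}W$, represents an element of $\{\alpha, E^k\beta, E^k\gamma\}_k$ up to sign and indeterminacy. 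Now apply $P$: by naturality of the generalized P-homomorphism with respect to the maps $E^{k}(EW)\to E^k(Y\cup_\beta CZ)$ and $E^{k}(Y\cup_\beta CZ)$, together with the fact that $P\tilde\alpha$ extends $P\alpha$ over $E^{k-2}(Y\cup_\beta CZ)$ and $\tilde\gamma$ descends (after desuspending two coordinates) to a coextension of $E^{k-2}\gamma$ relative to $E^{k-2}\beta$, the element $P(\tilde\alpha\circ\tilde\gamma)$ is (again up to sign) a representative of $\{P(\alpha), E^{k-2}\beta, E^{k-2}\gamma\}_{k-2}$. Hence $P\{\alpha, E^k\beta, E^k\gamma\}_k \subset \{P(\alpha), E^{k-2}\beta, E^{k-2}\gamma\}_{k-2}$, since $P$ sends each representative of the left-hand bracket to a representative of the right-hand bracket and the right-hand side is a coset.

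The step I expect to be the main obstacle is verifying the naturality of the generalized P-homomorphism on the relative object $Y\cup_\beta CZ$ — that is, that $P$ is genuinely defined there and commutes with the cofiber inclusion and projection in the way the coextension argument needs, including getting the suspension indices to line up (the hypothesis $k\ge 3$ is presumably exactly what is needed so that all three instances of $P$ in the statement land in the stable or metastable range where the construction of \cite{T1, IM} applies, and so that the double desuspension $E^k \rightsquigarrow E^{k-2}$ of the coextension $\tilde\gamma$ is legitimate). I would handle this by quoting the relevant functoriality statement from \cite{IM} (or \cite{T1}) for $P$ on a pair, and checking the dimension hypotheses case by case; the sign $-1$ that typically appears when one slides a map through a Toda-bracket construction is absorbed by the fact that the brackets are cosets of subgroups, so it does not affect the stated inclusion.
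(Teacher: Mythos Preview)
Your proposal is correct and follows essentially the same approach as the paper: represent an element of $\{\alpha, E^k\beta, E^k\gamma\}_k$ as $(-1)^k\bar\alpha\circ E^k\tilde\gamma$ via \cite[Proposition 1.7]{T}, apply $P$, and check that $P(\bar\alpha)$ is an extension of $P(\alpha)$ while the same $\tilde\gamma$ serves as a coextension on the $(k-2)$-level. The naturality you flag as the main obstacle is exactly \cite[Proposition 2.5]{IM}, which gives $P(\alpha\circ E^k f)=P(\alpha)\circ E^{k-2}f$; one minor caution is that the sign is not ``absorbed by cosets'' but rather survives as $(-1)^{k-2}=(-1)^k$, so the two sides genuinely match.
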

\begin{proof}
By Proposition 2.5 of \cite{IM}, we have 
$P(\alpha)\circ E^{k-2}\beta= P(\alpha\circ E^{k}\beta)=0$.
Hence $\{P(\alpha), E^{k-2}\beta, E^{k-2}\gamma\}_{k-2}$ is well-defined.
We denote $Y\cup_{\beta} CZ$ by $C_{\beta}$ and the inclusion map $Y\to C_{\beta}$
 by $i_{\beta}$.
It is well-known that
$C_{E^{k}\beta} = E^{k}C_{\beta}$.
By Proposition 1.7 of \cite{T},
any element of $\{\alpha, E^{k}\beta, E^{k}\gamma \}_{k}$ is represented as
$(-1)^{k}\bar{\alpha}\circ E^{k}\tilde{\gamma}$, where
$\bar{\alpha}\in[E^{k}C_{\beta}, E X\wedge X]$ is an extension of $\alpha$ and
$\tilde{\gamma}\in[E W, E^{k}C_{\beta}]$ is a coextension of $\gamma$.
By Proposition 2.5 of \cite{IM}, we have 
$P((-1)^{k}\bar{\alpha}\circ\Sigma^{k}\tilde{\gamma})=(-1)^{k}P(\bar{\alpha})\circ \Sigma^{k-2}\tilde{\gamma}$.
Since $P(\bar{\alpha})\circ \Sigma^{k-2}i_\beta
 = P(\bar{\alpha}\circ\Sigma^{k}i_\beta)=P(\alpha)$, the element
$P(\bar{\alpha})\in [\Sigma^{k-2}C_{\beta}, X]$ is an extension of $P(\alpha)$.
Hence we obtain 
\[
P((-1)^{k}\bar{\alpha}\circ\Sigma^{k}\tilde{\gamma})
  =(-1)^{k-2}P(\bar{\alpha})\circ\Sigma^{k-2}\tilde{\gamma}
    \in \{P(\alpha), \Sigma^{k-2}\beta, \Sigma^{k-2}\gamma\}_{k-2}
\]
 and
\[
P \{\alpha, \Sigma^k\beta, \Sigma^k\gamma\}_{k}
\subset \{P(\alpha), \Sigma^{k-2}\beta, \Sigma^{k-2}\gamma\}_{k-2}.
\]
\end{proof}

Now we show Lemma \ref{HPxl13}.

\begin{lem}\label{HPx13}
\begin{description}
\item[\rm (1)]
$H(P\xi_{13})\equiv\xi'\ \bmod\ 2\lambda', 2\xi'$\ and\  
$H(P\lambda)\equiv\lambda' \bmod 2\lambda',2\xi'$. 
\item[\rm (2)]
$H(P(\xi_{13}\eta_{31}))=\xi'\eta_{29} $\ and\  
$H(P(\lambda\eta_{31}))=\lambda'\eta_{29}$.
\end{description} 
\end{lem}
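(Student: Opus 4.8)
The plan is to pull known Toda-bracket presentations of $\xi_{13}$ and of $\lambda\eta_{31}$ backwards along $P$ and then forwards along $H$, landing inside the brackets already evaluated in Lemma \ref{usgnep}. The one genuinely new ingredient is the behaviour of $H\circ P$ on $\sigma$. Since $\sigma_{13}=E^{2}\sigma_{11}$ and the $P$-homomorphism satisfies $P(E^{2}\beta)=[\iota_6,\iota_6]\circ\beta$ (because $P\iota_{13}=[\iota_6,\iota_6]$ and $P$ is a homomorphism of modules over composition), one gets $P\sigma_{13}=[\iota_6,\iota_6]\circ\sigma_{11}$; as $6$ is even $H[\iota_6,\iota_6]=\pm 2\iota_{11}$, and as $\sigma_{11}$ is a suspension $H$ commutes with composing it on the right, so $H(P\sigma_{13})=\pm 2\sigma_{11}$ (and similarly $H(P\nu_{13})=\pm 2\nu_{11}$). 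This is what I would establish first.

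For $\xi_{13}$: by the definition of $\xi_{12}=\{\sigma_{12},\nu_{19},\sigma_{22}\}_1$ one has $\xi_{13}\in\{\sigma_{13},\nu_{20},\sigma_{23}\}_{2}=\{\sigma_{13},E^{2}\nu_{18},E^{2}\sigma_{21}\}_{2}$, so Lemma \ref{Miy} yields $P\xi_{13}\in\{[\iota_6,\iota_6]\sigma_{11},\nu_{18},\sigma_{21}\}$, and applying $H$ together with the previous paragraph gives $H(P\xi_{13})\in\{2\sigma_{11},\nu_{18},\sigma_{21}\}$, which is $\xi'$ modulo $2\lambda',2\xi'$ by Lemma \ref{usgnep}(1). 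For $\lambda$: Lemma \ref{omg'}(2) gives $\lambda\eta_{31}\in\{\sigma_{13},\nu_{20},\varepsilon_{23}\}=\{\sigma_{13},E^{2}\nu_{18},E^{2}\varepsilon_{21}\}$, and the same chain (Lemma \ref{Miy}, then $H$) lands $H(P(\lambda\eta_{31}))$ in $\{2\sigma_{11},\nu_{18},\varepsilon_{21}\}=\lambda'\eta_{29}$ by Lemma \ref{usgnep}(2) --- which already proves the $\lambda$-part of (2). Since $\eta_{31}=E^{2}\eta_{29}$ we have $P(\lambda\eta_{31})=(P\lambda)\circ\eta_{29}$, and $H$ commutes with $\circ\,\eta_{29}$, so $H(P\lambda)\circ\eta_{29}=\lambda'\eta_{29}$; as $\Ker(\,\cdot\circ\eta_{29}:\pi^{11}_{29}\to\pi^{11}_{30})=\langle 2\lambda',2\xi'\rangle$ (which one reads off from \cite[Theorem 12.23]{T} together with $\eta^{2}\bar\mu=4\bar\zeta\ne 0$), this forces $H(P\lambda)\equiv\lambda'\bmod 2\lambda',2\xi'$, finishing (1). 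Finally the $\xi_{13}$-part of (2) follows from (1): $P(\xi_{13}\eta_{31})=(P\xi_{13})\circ\eta_{29}$, so $H(P(\xi_{13}\eta_{31}))=H(P\xi_{13})\circ\eta_{29}$, and $2\eta_{29}=0$ kills the ambiguity $2\lambda',2\xi'$, whence $H(P(\xi_{13}\eta_{31}))=\xi'\eta_{29}$.

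The hard part will be the interaction of the non-additive James--Hopf invariant $H$ with Toda brackets, i.e.\ justifying $H\{[\iota_6,\iota_6]\sigma_{11},\nu_{18},\sigma_{21}\}\subseteq\{2\sigma_{11},\nu_{18},\sigma_{21}\}$ and the analogue with $\varepsilon_{21}$, which is not formal since naive functoriality of $H$ fails. I would handle this via Toda's formulas for the Hopf invariant of a secondary composition (\cite[Propositions 1.7 and 2.6]{T}), exploiting that $\nu_{18},\sigma_{21},\varepsilon_{21}$ are suspensions, that $[\iota_6,\iota_6]$ suspends to zero, and that $H$ is additive on $\pi^{11}_{29}$ (metastable range) and annihilates $2\lambda',2\xi'$. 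The remaining task is purely bookkeeping: checking the hypotheses of Lemma \ref{Miy} in our situation and verifying that the indeterminacies introduced by Lemma \ref{Miy} and by the $H$-step stay inside $\langle 2\lambda',2\xi'\rangle$, so that the stated congruences come out on the nose.
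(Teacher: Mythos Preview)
Your approach is essentially the paper's: push $P$ inside the Toda bracket via Lemma~\ref{Miy}, then push $H$ inside, and invoke Lemma~\ref{usgnep}. Two small remarks on the details.

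First, what you flag as the ``hard part'' --- getting from $H\{P\sigma_{13},\nu_{18},\sigma_{21}\}_1$ to $\{2\sigma_{11},\nu_{18},\sigma_{21}\}_1$ --- is dispatched in the paper by a single citation of \cite[Proposition~2.3]{T}, which gives exactly $H\{\alpha,E\beta,E\gamma\}_1\subset\{H\alpha,\beta,\gamma\}$-type behaviour when the second and third entries are suspensions (as $\nu_{18},\sigma_{21},\varepsilon_{21}$ are here). So no ad hoc argument via Propositions~1.7 and~2.6 is needed; this step is routine.

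Second, for the $\lambda$-half of (1) the paper does not rely purely on the kernel computation you propose. It quotes \cite[(3.3)]{MMO}, which already gives $H(P\lambda)\equiv\pm\lambda'\bmod 2\lambda',2\xi',\eta_{11}\bar\mu_{12}$, and then uses the just-proved equality $H(P(\lambda\eta_{31}))=\lambda'\eta_{29}$ to fix the sign and kill the $\eta_{11}\bar\mu_{12}$ term. Your route --- deducing (1) for $\lambda$ directly from (2) via $\Ker(\,\cdot\,\eta_{29}:\pi^{11}_{29}\to\pi^{11}_{30})=\{2\lambda',2\xi'\}$ --- reaches the same conclusion and is arguably cleaner since it avoids the external input from \cite{MMO}, though you should state explicitly that $\eta_{11}\bar\mu_{12}\eta_{29}=4\bar\zeta_{11}\ne 0$ and that $\lambda'\eta_{29},\xi'\eta_{29}$ are independent in $\pi^{11}_{30}$.
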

\begin{proof}
(2) is a direct consequence of (1). 
By the definition of $\xi_{12}$ and the fact that $\sigma_{12}\circ E\pi^{18}_{29}=\sigma_{12}\circ E^2\pi^{17}_{28}$, we know
$$
\xi_{12}\in\{\sigma_{12},\nu_{19},\sigma_{22}\}_1=
\{\sigma_{12},\nu_{19},\sigma_{22}\}_2.
$$
By Lemmas \ref{usgnep}(1), \ref{Miy}, \cite[Proposition 2.3]{T} and 
$HP\sigma_{13}=(H[\iota_6,\iota_6])\sigma_{11}=2\sigma_{11}$, we see that 
$$
P\xi_{13}\in P\{\sigma_{13},\nu_{20},\sigma_{23}\}_3\subset
\{P\sigma_{13}, \nu_{18},\sigma_{21}\}_1
$$
and 
$$
HP\xi_{13}\in 
\{2\sigma_{11}, \nu_{18},\sigma_{21}\}_1\equiv\xi'\ \bmod\ 2\lambda', 2\xi'.
$$
This leads to the first half of (1). 

By the fact that $\sigma_{12}\circ\pi^{19}_{31}=2\sigma_{11}\circ\pi^{18}_{30}=0$, 
we have 
$$
\{\sigma_{12},\nu_{19},\varepsilon_{22}\}=\{\sigma_{12},\nu_{19},\varepsilon_{22}\}_n\ (0\le n\le 12)
$$
and
$$
\{2\sigma_{11},\nu_{18},\varepsilon_{21}\}=\{2\sigma_{11},\nu_{18},\varepsilon_{21}\}_n\ (0\le n\le 11).
$$
So, by Lemmas \ref{omg'}(2) and \ref{Miy}, we obtain 
$$
P(E\omega')=P\{\sigma_{13},\nu_{20},\varepsilon_{23}\}_3\subset\{P(\sigma_{13}),\nu_{18},\varepsilon_{21}\}_1
$$
and
$$
HP(E\omega')\in H\{P(\sigma_{13}),\nu_{18},\varepsilon_{21}\}_1\subset\{2\sigma_{11},\nu_{18},\varepsilon_{21}\}_1=\lambda'\eta_{29}.
$$
By \cite[(3.3)]{MMO}, we see that 
$$
H(P\lambda)\equiv\pm\lambda'\ \bmod\ 2\lambda',2\xi',\eta_{11}\bar{\mu}_{12}.
$$
Hence, Lemma \ref{omg'} implies  
$$
HP(\lambda\eta_{31})=\lambda'\eta_{29}. 
$$
This and (1) lead to the second half of (1) and completes the proof. 
\end{proof}

\section{Proof of Theorem \ref{main}, I}

We recall from \cite[III-Proposition 2.6(5)]{Od} the relation 
$$
\bar{\nu}_7\omega_{15}\equiv 0\ \bmod\ \nu_7\sigma_{10}\kappa_{17}, \bar{\zeta}'_7
$$
and
\begin{equation}\label{bnu9omg}
\bar{\nu}_9\omega_{17}=0. 
\end{equation}

We show 
\begin{lem}\label{etnusg}
\begin{description}
\item[\rm (1)]
$\{\eta_{10},\nu_{11},\sigma_{14}\}=2[\iota_{10},\nu_{10}]$. 
\item[\rm (2)]
$\{\eta_n,\nu_{n+1},\sigma_{n+8}\}=0\ (n\ge 11)$. 
\end{description}
\end{lem}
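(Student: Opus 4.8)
\textbf{Proof proposal for Lemma \ref{etnusg}.}

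The plan is to compute the two Toda brackets by working in the EHP sequence and reducing each one to a Toda bracket in a stem where the relevant groups are already known from \cite{T}. For part (1), first I would note that $\eta_{10}\nu_{11}=0$ by \eqref{n2} and $\nu_{11}\sigma_{14}=0$ (this follows from $\sigma_{11}\nu_{18}=0$ together with $\nu_{11}\sigma_{14}=\pm\sigma_{11}\nu_{18}$ in the stable range, or directly from \cite[(7.20)]{T}), so the bracket $\{\eta_{10},\nu_{11},\sigma_{14}\}$ is defined. I would then apply the Hopf invariant and the formula $H\{\eta_{10},\nu_{11},\sigma_{14}\}_1 = -P^{-1}(\eta_9\nu_{10})\circ\sigma_{\ast}$ of \cite[Proposition 2.6]{T}, or alternatively use the relation $\nu_5\eta_8=[\iota_5,\iota_5]$ \eqref{n3} and $\nu_5\bar{\nu}_8=\ldots$; the cleanest route is probably to desuspend and recognize $\{\eta,\nu,\sigma\}$-type brackets via the bracket $\langle \nu,\eta,\sigma\rangle$ together with $2\nu_n=$ (the identity on the relevant Moore space). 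The identification of the answer with $2[\iota_{10},\nu_{10}]$ should come from the relation $\bar{\nu}_9\nu_{17}=[\iota_9,\nu_9]$ \eqref{bnun} and $2\bar{\nu}_n$-type relations, or from $[\iota_{10},\eta_{10}]=\nu_{10}\sigma_{13}=2\sigma_{10}\nu_{17}$ \eqref{nu10sg} via a Jacobi-identity manipulation relating $[\iota_{10},\nu_{10}]$ to $[\iota_{10},\eta_{10}]\circ$(something); I expect the indeterminacy to be controlled by $\eta_{10}\circ\pi^{11}_{18}+\pi^{10}_{15}\circ\sigma_{15}$, which should be computable from the tables in \cite{T} and likely forces the bracket to be the single nonzero element $2[\iota_{10},\nu_{10}]$ once we know its image under $H$ or $E$.

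For part (2), the claim is that after one further suspension the bracket dies. The natural approach is: the bracket $\{\eta_n,\nu_{n+1},\sigma_{n+8}\}$ for $n\geq 11$ is a stable bracket up to the indeterminacy $\eta_n\circ\pi^{n+1}_{n+9}+\pi^{n}_{n+8}\circ\sigma_{n+8}$, and $\langle\eta,\nu,\sigma\rangle$ lives in $\pi^s_{12}=0$ by \cite[Theorem 12.16 (or 7.x)]{T}. Hence for $n$ in the stable range the bracket is contained in its indeterminacy, and one checks that this indeterminacy is $0$ using $\eta\circ\pi^s_{10}=\{\eta\mu\sigma?\}$ — more carefully, $\pi^s_{10}=\{\eta\mu, \ldots\}$ and $\eta^2\mu = 4\zeta \neq 0$ so a little care is needed — and $\pi^s_{11}\circ\sigma = \zeta\sigma = 0$ by \eqref{n7}. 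For the metastable values $n=11,\ldots$ before stabilization, one uses the EHP sequence and the fact that $\{\eta_{10},\nu_{11},\sigma_{14}\}=2[\iota_{10},\nu_{10}]$ suspends to $E(2[\iota_{10},\nu_{10}])=0$ since $E\circ P = 0$ (Whitehead products suspend trivially), together with the compatibility $E\{\eta_n,\nu_{n+1},\sigma_{n+8}\}\subset\{\eta_{n+1},\nu_{n+2},\sigma_{n+9}\}$ and an inductive check that no new indeterminacy is introduced at each suspension.

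The main obstacle I anticipate is controlling the indeterminacies precisely — in particular ruling out contributions from $\eta_n\circ\pi^{n+1}_{n+9}$ (which involves $\eta\mu$-type and $\eta^3\sigma$-type elements) and from $\pi^n_{n+8}\circ\sigma_{n+8}$ (elements like $\bar{\nu}_n\sigma_{n+8}$ and $\varepsilon_n\sigma_{n+8}$, both of which vanish by \eqref{bnu6sg} and the relations collected around \eqref{et9sg2}) — so that the bracket in part (2) is genuinely a single element equal to $0$ rather than merely containing $0$. A secondary technical point is making the first-suspension identification in part (1) unambiguous: I would pin down $2[\iota_{10},\nu_{10}]$ as the correct representative by comparing Hopf invariants, using $H[\iota_{10},\nu_{10}]=\pm\nu_{19}$-type formulas and the known structure of $\pi^{10}_{18}$, and I expect \cite[Proposition 2.6]{T} applied to $P^{-1}$ of $\eta_9\nu_{10}=[\iota_4,\ldots]$-related classes to settle it.
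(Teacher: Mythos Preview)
Your strategy for part (2) is essentially the paper's: one computes the indeterminacy at $n=11$ to be zero (using $\eta_n\zeta_{n+1}=0$, $\eta_{11}\circ[\iota_{12},\iota_{12}]=[\iota_{11},\eta^2_{11}]=0$, and $\pi^{11}_{16}=0$), observes that $\{\eta_{10},\nu_{11},\sigma_{14}\}\subset\pi^{10}_{22}=\{[\iota_{10},\nu_{10}]\}$ consists entirely of Whitehead products, and suspends. Note, however, that your stated indeterminacy groups $\eta_n\circ\pi^{n+1}_{n+9}$ and $\pi^n_{n+8}\circ\sigma_{n+8}$ are indexed incorrectly; the bracket lives in $\pi^n_{n+12}$, so the indeterminacy is $\eta_n\circ\pi^{n+1}_{n+12}+\pi^n_{n+5}\circ\sigma_{n+5}$, and the relevant inputs are $\zeta$ and $\pi^s_5=0$, not $\bar\nu$, $\varepsilon$, or $\eta\mu$.

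For part (1) there is a genuine gap. Your principal suggestion, \cite[Proposition~2.6]{T}, computes $H\{\eta_{10},\nu_{11},\sigma_{14}\}_1$ via $P^{-1}(\eta_9\nu_{10})$; but $\eta_9\nu_{10}=0$, so this yields no information (the relevant $P^{-1}(0)$ sits in a trivial group). Your alternative sketches --- Jacobi manipulations with $[\iota_{10},\eta_{10}]$, or appealing to $\bar\nu_9\nu_{17}=[\iota_9,\nu_9]$ --- do not connect to the bracket $\{\eta,\nu,\sigma\}$ in any visible way, and the reference to ``$\pi^{10}_{18}$'' is off by a stem. The problem is that $\pi^{10}_{22}\cong\Z_4\{[\iota_{10},\nu_{10}]\}$ and you must isolate which multiple occurs; this requires actually computing $H$ of the bracket and getting $\eta^3_{19}=4\nu_{19}$.

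The paper does this by a method you have not anticipated: it realizes the Toda bracket geometrically using the quaternionic projective plane $\H P^2$. One takes an extension $\bar\eta'_9\in[E^6\H P^2,S^9]$ of $\eta_9$ over the cofibre of $\nu$ and a coextension $\tilde\sigma_{14}\in\pi_{22}(E^7\H P^2)$ of $\sigma_{14}$, so that the bracket equals $E\bar\eta'_9\circ\tilde\sigma_{14}$. A Blakers--Massey / EHP computation for $E^7\H P^2$ shows $H(\tilde\sigma_{14})=E(E^6 i_\H\wedge E^6 i_\H)\eta_{21}$, whence
\[
H(E\bar\eta'_9\circ\tilde\sigma_{14})=E(\bar\eta'_9\wedge\bar\eta'_9)\circ E(E^6 i_\H\wedge E^6 i_\H)\eta_{21}=\eta^3_{19}=4\nu_{19},
\]
which pins the bracket down as $2[\iota_{10},\nu_{10}]$. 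This direct cell-complex computation is the missing idea.
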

\begin{proof}
First we show (2). 
From relations $\eta_n\zeta_{n+1}=0\ (n\ge 5)$ (\ref{et4zt}) and 
$\eta_{11}\circ[\iota_{12},\iota_{12}]=[\iota_{11},\eta^2_{11}]=0$ \cite[Theorem]{H}, 
it suffices to show that $\{\eta_{11},\nu_{12},\sigma_{15}\}=0$. Since 
$\eta_{11}\circ\pi^{12}_{23}+\pi^{11}_{16}\circ\sigma_{16}\!=\!\{\eta_{11}\zeta_{12},\eta_{11}\circ[\iota_{12},\iota_{12}]\}\!=\!0$, we have ${\rm Ind}\{\eta_{11},\nu_{12},\sigma_{15}\}\!=\!0$. 
From the fact that  $\{\eta_{10},\nu_{11},\sigma_{14}\}\!\subset\!\pi^{10}_{22}\!=\!\{[\iota_{10},\nu_{10}]\}$, we have 
$
\{\eta_{11},\nu_{12},\sigma_{15}\}\supset E\{\eta_{10},\nu_{11},\sigma_{14}\}=0\ \bmod\ 0$.

Next we show (1). 
Let $\H P^2$ be the quaternionic projective plane and 
$i_\H: S^4\hookrightarrow\H P^2$ the inclusion. 
By the definition of the Toda bracket, there exist an extension $\bar{\eta}'_9\in[E^6\H P^2,S^9]$ of $\eta_9$ 
and a coextension $\tilde{\sigma}_{14}\in\pi_{22}(E^7\H P^2)$ of $\sigma_{14}$ such that $\{\eta_{10},\nu_{11},\sigma_{14}\}=E\bar{\eta}'_9\circ
\tilde{\sigma}_{14}$. 
By the Blakers-Massey theorem, we have
$\pi_{22}(E^7\H P^2,S^{11})\cong\pi_{22}(S^{15})$ and
$\pi_{23}(E^7\H P^2,S^{11})\cong\pi_{23}(S^{15})$. 
So, by the homotopy exact sequence of a pair $(E^7\H P^2,S^{11})$, we obtain 
$$
\pi_{22}(E^7\H P^2)=\{\tilde{\sigma}_{14},i\zeta_{11}\}\ (i=E^7i_\H: S^{11}\hookrightarrow E^7\H P^2).
$$
We consider the EHP sequence 
$$
\pi_{22}(E^7\H P^2)\rarrow{H}\pi_{22}(E(E^6\H P^2\wedge E^6\H P^2))\rarrow{P}\pi_{20}(E^6\H P^2). 
$$
We have 
$\pi_{21}(E^6\H P^2,S^{10})\cong\pi_{21}(S^{14})$ and 
$\partial\pi_{21}(E^6\H P^2,S^{10})=\{\nu_{10}\sigma_{13}\}$. So we obtain $(E^6i_\H)[\iota_{10},\eta_{10}]=0$. 
We have $\pi_{22}(E(E^6\H P^2\wedge E^6\H P^2))\cong\pi_{22}(S^{21})$ and $P(\pi_{22}(E(E^6\H P^2\wedge E^6\H P^2))=\{(E^6i_\H)[\iota_{10},\eta_{10}]\}=0$ (\ref{nu10sg}). Hence we get that
$$
H(\tilde{\sigma}_{14})=E(E^6i_\H\wedge E^6i_\H)\eta_{21}.
$$
Thus we see that 
\[
H(E\bar{\eta}'_9\circ\tilde{\sigma}_{14})
\!=\!E(\bar{\eta}'_9\wedge \bar{\eta}'_9)\circ H(\tilde{\sigma}_{14})
\!=\!E(\bar{\eta}'_9\wedge \bar{\eta}'_9)\circ E(E^6i_\H\wedge E^6i_\H)\eta_{21}
\!=\!\eta^3_{19}=4\nu_{19}.
\]
This leads to (1) and completes the proof. 
\end{proof}

We need relations
\begin{lem}\label{n15sgep}
$\{\nu_{13},\sigma_{16},\varepsilon_{23}\}_2=0$ \ and \  
$\{\nu_{n},\sigma_{n+3},\varepsilon_{n+10}\}=0$ for $n\ge 15$.
\end{lem}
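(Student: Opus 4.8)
The plan is to establish the two statements separately, handling the stable-range case $n\ge 15$ first since it is the more routine of the two, and then doing the unstable bracket $\{\nu_{13},\sigma_{16},\varepsilon_{23}\}_2$ via Hopf invariant and EHP methods, as was done for $\omega'$ and $\nu_{13}\eta^*_{16}$ in Section~3.

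For the stable part, I would argue that $\{\nu_n,\sigma_{n+3},\varepsilon_{n+10}\}$ for $n\ge 15$ stabilizes to the stable bracket $\langle\nu,\sigma,\varepsilon\rangle$, which is $0$ by Lemma~\ref{sgnep}(2). The point is that in the range $n\ge 15$ the relevant suspensions are isomorphisms on the groups in question, and the indeterminacy $\nu_n\circ\pi^{n+3}_{n+10}+\pi^n_{n+11}\circ\varepsilon_{n+11}$ vanishes: one computes $\nu\circ\pi^s_7=0$ (using $\nu\sigma=0$ and $\nu\bar\nu_\ast$, $\nu\varepsilon_\ast$ relations from \cite{T}) and $\pi^s_{11}\circ\varepsilon=\{\zeta\}\circ\varepsilon=\zeta\varepsilon=0$ by \eqref{ze6ep}. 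So the bracket is a single element, equal to the image of $\langle\nu,\sigma,\varepsilon\rangle$, hence $0$. I would be a little careful that $n=15$ is actually in the stable range for all three relevant groups $\pi^{15}_{18}$, $\pi^{15}_{25}$ and $\pi^{15}_{26}$, but this follows from the usual Freudenthal bounds together with Toda's tables.

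For $\{\nu_{13},\sigma_{16},\varepsilon_{23}\}_2$, the strategy is to compute its Hopf invariant and show it lies in the kernel of $E^2$ restricted to a small group. First I would pin down the indeterminacy: by \cite[Theorem 7.4 and 7.6]{T} and \eqref{ze6ep} one expects $\nu_{13}\circ E^2\pi^{14}_{31}+\pi^{13}_{24}\circ\varepsilon_{24}=0$ (mirroring the computation ${\rm Ind}\{\sigma_{13},\nu_{20},\varepsilon_{23}\}_n=0$ from Lemma~\ref{omg'}), so the bracket is a single element. Then by \cite[Proposition 2.6]{T} its Hopf invariant is $-P^{-1}(\sigma_{14}\nu_{17})\circ\varepsilon$ or a similar expression; using $\sigma_{12}\nu_{19}=0$ (from \cite[(7.20)]{T}) one checks $H\{\nu_{13},\sigma_{16},\varepsilon_{23}\}_2$ lands in a group that forces it to be detected only by classes killed under double suspension. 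Comparing with Lemma~\ref{omg'}(1), which gives $H(\omega')=\varepsilon_{23}$, and with the relation $\langle\sigma,\nu,\varepsilon\rangle=0$ from Lemma~\ref{sgnep}, together with the anticommutativity $\{\nu_{13},\sigma_{16},\varepsilon_{23}\}\supset -\{\sigma_{13},\nu_{20},\varepsilon_{23}\}$ up to suspension via \cite[Proposition 1.4]{T}, one identifies the bracket with $E$ of the bracket from Lemma~\ref{omg'}(2) (which equals $\lambda\eta_{31}=E\omega'$) pulled back appropriately, and then shows the resulting element in $\pi^{13}_{33}$ (or wherever it lives) is zero because its double suspension vanishes and it avoids the $[\iota_{13},-]$ summands. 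The cleanest route may be: the bracket $\{\nu_{13},\sigma_{16},\varepsilon_{23}\}_2$ maps under $H$ into a group where the only possibility consistent with $E^2=0$ and the stable vanishing $\langle\nu,\sigma,\varepsilon\rangle=0$ is $H=0$, hence the bracket itself lies in $E\pi$, and a further suspension/EHP step (using $\lambda\sigma_{31}=0$, $\omega'\sigma_{31}=0$ from Lemma~\ref{omg'sg}, and the independence statements in \cite[I-Theorem 1(b)]{Od}) forces it to vanish.

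The main obstacle I expect is the $\{\nu_{13},\sigma_{16},\varepsilon_{23}\}_2$ computation: getting the indeterminacy to vanish requires knowing $\nu_{13}\circ E^2\pi^{14}_{31}=0$, which needs the structure of $\pi^{14}_{31}$ and the vanishing of $\nu_{13}$ composed with each of its generators (in particular $\nu_{13}\omega_{16}=0$ from \eqref{n11om} and $\nu_{13}\circ$ the $\lambda,\xi_{13}$-type classes), and then the Hopf-invariant bookkeeping in the unstable range has to be done carefully enough to exclude the $[\iota_{13},-]$ contributions. If the indeterminacy does not quite vanish, I would instead prove the weaker ``$\bmod$ indeterminacy'' statement and check that the indeterminacy terms are themselves killed by whatever map this lemma is subsequently fed into.
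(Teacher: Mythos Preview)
Your stable-range argument is the same as the paper's in spirit, but watch the dimensions: the bracket lives in $\pi^s_{19}$, so the left indeterminacy is $\nu\circ\pi^s_{16}$, not $\nu\circ\pi^s_{7}$. You need $\nu\omega=0$ from \eqref{n11om} and $\nu\sigma\mu=0$, not the $\nu\bar\nu$, $\nu\varepsilon$ relations you cite. With that correction the argument goes through and matches the paper's treatment of $n\ge 15$ (which simply suspends up from the $n=13$ case).

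For $\{\nu_{13},\sigma_{16},\varepsilon_{23}\}_2$ you have missed the short route. The paper does no Hopf-invariant computation here at all: once the indeterminacy is shown to vanish (your sketch of this is roughly right, though the relevant group is $E^2\pi^{14}_{30}$, not $E^2\pi^{14}_{31}$), it simply invokes Corollary~\ref{sgnep1}, which was already established and says $E^2\{\nu_{11},\sigma_{14},\varepsilon_{21}\}=0$. Since $E^2\{\nu_{11},\sigma_{14},\varepsilon_{21}\}\subset\{\nu_{13},\sigma_{16},\varepsilon_{23}\}_2$, this finishes it in one line.

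Your proposed EHP route has further problems beyond being roundabout. The expression ``$-P^{-1}(\sigma_{14}\nu_{17})\circ\varepsilon$'' is not what \cite[Proposition~2.6]{T} gives for this bracket: you have imported the entries from the $\{\sigma_{12},\nu_{19},\varepsilon_{22}\}$ computation. The correct input is $P^{-1}(\nu_{12}\sigma_{15})$ with $\nu_{12}\sigma_{15}=0$ and $\pi^{25}_{24}=0$, so one gets only $H=0$. That places the element in $E\pi^{12}_{31}$, and even combined with the stable vanishing this does not by itself exclude contributions from classes like $E^2(\lambda'\eta_{29})$ or $E^2(\xi'\eta_{29})$---which is exactly the content of Corollary~\ref{sgnep1} that you would end up re-proving, only at a less convenient dimension.
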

\begin{proof}
By using $\pi^{14}_{30}=\{\omega_{14},\sigma_{14}\mu_{21}\}$,
$\pi^{k}_{k+16}=\{\omega_{k},\sigma_{k}\mu_{k+7}\}$ for $k\ge 18$,
$\pi^{\ell}_{\ell+11}=\{\zeta_{\ell}\}$ for $\ell\ge 13$,
(\ref{n11om}), (\ref{nu10sg}) and (\ref{ze6ep}), we have 
\[
{\rm Ind}\{\nu_{13},\sigma_{16},\varepsilon_{23}\}_2
 =\nu_{13}\circ E^2\pi^{14}_{30}+\pi^{13}_{24}\circ\varepsilon_{24}=0
\]
and
\[
{\rm Ind}\{\nu_{n},\sigma_{n+3},\varepsilon_{n+10}\}
 =\nu_{n}\circ\pi^{n+3}_{n+19}+\pi^{n}_{n+11}\circ\varepsilon_{n+11}=0
\]
for $n\ge 15$.
By Corollary \ref{sgnep1}, we have 
\[
0=E^2\{\nu_{11}, \sigma_{14}, \varepsilon_{21}\}
 \subset \{\nu_{13},\sigma_{16},\varepsilon_{23}\}_2 \ \bmod \ 0.
\]
This leads to the first half. Moreover, we have
\[\begin{split}
0=E^{n-13}\{\nu_{13},\sigma_{16},\varepsilon_{23}\}_2
 &\subset E^{n-15}\{\nu_{15},\sigma_{18},\varepsilon_{25}\}\\
 &\subset (-1)^{n-15}\{\nu_{n},\sigma_{n+3},\varepsilon_{n+10}\}
\ \bmod \ 0.
\end{split}\]
This leads to the second half and completes the proof. 
\end{proof}

We show 
\begin{lem}\label{bnusgep}
$\{\bar{\nu}_{11},\sigma_{19},\varepsilon_{26}\}\ni 0\ \bmod\ \sigma_{11}\nu_{18}\kappa_{21}=[\iota_{11},\kappa_{11}]$ and\\ $E\{\bar{\nu}_{11},\sigma_{19},\varepsilon_{26}\}=\{\bar{\nu}_{12},\sigma_{20},\varepsilon_{27}\}=0$.
\end{lem}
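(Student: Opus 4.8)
The plan is to dispose first of the formal points and then to reduce everything to Lemma \ref{sgnep}, Corollary \ref{sgnep1} and Lemmas \ref{etnusg}, \ref{n15sgep} by means of the Jacobi identity for Toda brackets. First I would check that $\{\bar{\nu}_{11},\sigma_{19},\varepsilon_{26}\}$ is defined: $\bar{\nu}_{11}\sigma_{19}=0$ is the suspension of $\bar{\nu}_6\sigma_{14}=0$ in \eqref{bnu6sg}, and $\sigma_{19}\varepsilon_{26}=0$ is the suspension of $\sigma_{11}\varepsilon_{18}=0$ in \eqref{n5}. The equality $\sigma_{11}\nu_{18}\kappa_{21}=[\iota_{11},\kappa_{11}]$ is immediate from $\sigma_{11}\nu_{18}=[\iota_{11},\iota_{11}]$ in \eqref{sgm11n} together with $[\iota_{11},\iota_{11}]\circ\kappa_{21}=[\iota_{11},\kappa_{11}]$.

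For the stable assertion I would use $\bar{\nu}\in\langle\nu,\eta,\nu\rangle$, the stabilisation of \cite[Lemma 6.2]{T}, and apply the Jacobi identity \cite[(3.7)]{T} to $\langle\langle\nu,\eta,\nu\rangle,\sigma,\varepsilon\rangle$, which gives
\[
\langle\nu,\eta,\langle\nu,\sigma,\varepsilon\rangle\rangle\equiv\langle\nu,\langle\eta,\nu,\sigma\rangle,\varepsilon\rangle+\langle\bar{\nu},\sigma,\varepsilon\rangle
\]
modulo indeterminacies. Here $\langle\nu,\sigma,\varepsilon\rangle=0$ by Lemma \ref{sgnep}(2) and $\langle\eta,\nu,\sigma\rangle\subset\pi^s_{12}=0$, so the left side and the first term on the right reduce to $\langle\nu,\eta,0\rangle$ and $\langle\nu,0,\varepsilon\rangle$; each contains $0$, and using $\nu\eta=\nu\sigma=0$ one verifies that the residual term lies in ${\rm Ind}\langle\bar{\nu},\sigma,\varepsilon\rangle=\bar{\nu}\circ\pi^s_{16}+\pi^s_{16}\circ\varepsilon$. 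Hence $0\in\langle\bar{\nu},\sigma,\varepsilon\rangle$. Passing to $S^{12}$ and checking, from the relevant groups $\pi^{12}_*$ and Lemma \ref{sgnep}(2), that ${\rm Ind}\{\bar{\nu}_{12},\sigma_{20},\varepsilon_{27}\}=0$, this gives $\{\bar{\nu}_{12},\sigma_{20},\varepsilon_{27}\}=0$, and one further suspension preserves the equality.

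For the first, unstable, assertion I would run the same Jacobi computation one dimension lower, starting from $\bar{\nu}_{11}\in\{\nu_{11},\eta_{14},\nu_{15}\}_1$. Now the inner brackets that occur are the unstable ones controlled by Corollary \ref{sgnep1} ($\{\nu_{11},\sigma_{14},\varepsilon_{21}\}\subset\{\lambda'\eta_{29},\xi'\eta_{29}\}$, with vanishing double suspension), by Lemma \ref{etnusg} ($\{\eta_{10},\nu_{11},\sigma_{14}\}=2[\iota_{10},\nu_{10}]$ together with the trivial higher cases), and by Lemma \ref{n15sgep} ($\{\nu_{13},\sigma_{16},\varepsilon_{23}\}_2=0$ and its suspensions). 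Feeding these into the Jacobi identity and collecting the error terms, all of which are $P$-images and hence suspensions of Whitehead products, one finds that $0$ lies in $\{\bar{\nu}_{11},\sigma_{19},\varepsilon_{26}\}$ modulo $\Z[\iota_{11},\kappa_{11}]=\Z\sigma_{11}\nu_{18}\kappa_{21}$; since $E[\iota_{11},\kappa_{11}]=0$, applying $E$ recovers the stable conclusion of the previous paragraph and yields $E\{\bar{\nu}_{11},\sigma_{19},\varepsilon_{26}\}=\{\bar{\nu}_{12},\sigma_{20},\varepsilon_{27}\}=0$.

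The main obstacle will be the indeterminacy bookkeeping in the unstable groups $\pi^{11}_{35}$ and $\pi^{12}_{36}$: one must verify that every error term produced by the unstable Jacobi identity and by the non-stable inner brackets collapses into $\Z\sigma_{11}\nu_{18}\kappa_{21}$, and this rests on a short list of product relations, such as $\nu\sigma=0$, $\nu\eta=0$, $\sigma\mu\varepsilon=0$ and the value of $\bar{\nu}\eta^*$. Once these are settled, the two displayed relations of the lemma follow as above.
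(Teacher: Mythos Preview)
Your approach is essentially the paper's: write $\bar{\nu}_{11}=\{\nu_{11},\eta_{14},\nu_{15}\}$, apply the Jacobi identity \cite[(3.7)]{T}, and kill the two inner brackets by Lemmas \ref{etnusg}(2) and \ref{n15sgep}; the separate stable argument you run first is redundant, since the paper simply shows ${\rm Ind}\{\bar{\nu}_{11},\sigma_{19},\varepsilon_{26}\}={\rm Ind}\{\bar{\nu}_{12},\sigma_{20},\varepsilon_{27}\}=0$ and then the $S^{12}$ statement follows by suspending the $S^{11}$ one. One point to sharpen: the residual error from Jacobi is $\nu_{11}\circ\pi^{14}_{35}+\pi^{11}_{27}\circ\varepsilon_{27}$, and these are \emph{not} a priori $P$-images as you assert; the paper dispatches $\pi^{11}_{27}\circ\varepsilon_{27}=0$ via $\sigma_{11}\mu_{18}\varepsilon_{27}=\sigma_{11}\varepsilon_{18}\mu_{26}=0$, and then computes $\nu_{11}\circ\pi^{14}_{35}$ explicitly from the generators of $\pi^{14}_{35}$ in \cite[Theorem A]{Mi} together with $\nu_{11}\eta_{14}=\nu_{11}\sigma_{14}=0$, leaving only $E(\nu_{10}\lambda\nu_{31})=\sigma_{11}\nu_{18}\kappa_{21}=[\iota_{11},\kappa_{11}]$ by \eqref{nulmn} and \eqref{sgm11n}. (Corollary \ref{sgnep1} is not needed here; it is already absorbed into Lemma \ref{n15sgep}.)
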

\begin{proof}
By \cite[Theorem 12.16]{T}, we have 
$$
{\rm Ind}\{\bar{\nu}_{11},\sigma_{19},\varepsilon_{26}\}=
\bar{\nu}_{11}\circ \pi^{19}_{35}+\pi^{11}_{27}\circ\varepsilon_{27}=
\bar{\nu}_{11}\circ\{\omega_{19},\sigma_{19}\mu_{26}\}+\{\sigma_{11}\mu_{18}\}\circ\varepsilon_{27}.  
$$ 
By (\ref{bnu6sg}), (\ref{bnu9omg}) and the fact that 
$\sigma_{11}\mu_{18}\varepsilon_{27}=\sigma_{11}\varepsilon_{18}\mu_{26}=0$
from \eqref{n5}, the indeterminacy is trivial.  
Similarly, we obtain
$$
{\rm Ind}\{\bar{\nu}_{12},\sigma_{20},\varepsilon_{27}\}=0.
$$ 
Therefore, the second assertion follows directly from the first assertion. 

By the relation $\{\nu_n,\eta_{n+3},\nu_{n+4}\}=\bar{\nu}_n$ for $n\ge 7$ \cite[Lemma 6.2]{T},
we have
$$
\{\bar{\nu}_{11},\sigma_{19},\varepsilon_{26}\}=\{\{\nu_{11},\eta_{14},\nu_{15}\},\sigma_{19},\varepsilon_{26}\}.
$$
By the Jacobi identity of Toda brackets and Lemmas \ref{etnusg} and \ref{n15sgep},
 we have
\[\begin{split}
\{\{\nu_{11},\eta_{14},\nu_{15}\},\sigma_{19},\varepsilon_{26}\}
&\equiv\{\nu_{11},\{\eta_{14},\nu_{15},\sigma_{18}\},\varepsilon_{26}\}
  +\{\nu_{11},\eta_{14},\{\nu_{15},\sigma_{18},\varepsilon_{25}\}\}\\
&\ni 0 \bmod\ \nu_{11}\circ\pi^{14}_{35}
+\pi^{11}_{27}\circ\varepsilon_{27}=\nu_{11}\circ\pi^{14}_{35}.
\end{split}\] 
By \cite[Theorem A]{Mi}, $\nu_{11}\eta_{14}=0$ and $\nu_{11}\sigma_{14}=0$, 
we have $\nu_{11}\circ\pi^{14}_{35}=\{E(\nu_{10}\lambda\nu_{31})\}$. 
Moreover, by the relations (\ref{sgm11n}) and (\ref{nulmn}), 
we have $E(\nu_{10}\lambda\nu_{31})=\sigma_{11}\nu_{18}\kappa_{17}
  =[\iota_{11},\kappa_{11}]$.
Thus, we have the first assertion. This completes the proof.
\end{proof}

Here we recall the definitions of $\delta_3$, $\bar{\sigma}'_6$ and  $\bar{\bar{\sigma}}'_6$ \cite[p. 13, 15]{MMO}: 
$$
\delta_3\in\{\varepsilon_3,\varepsilon_{11}+\bar{\nu}_{11},\sigma_{19}\}_1, 
$$
$$
\bar{\sigma}'_6\in\{\bar{\nu}_6,\varepsilon_{14}+\bar{\nu}_{14},\sigma_{22}\}_1,
$$
$$
\bar{\bar{\sigma}}'_6\in\{\nu_6,\eta_9,\bar{\sigma}_{10}\}_3.
$$

The indeterminacy of  $\{\varepsilon_3,\varepsilon_{11}+\bar{\nu}_{11},\sigma_{19}\}_1$ is 
$\varepsilon_3\circ E\pi^{10}_{26}+\pi^3_{20}\circ\sigma_{20}$. Since 
$\varepsilon_3\sigma_{11}=0$ \eqref{bnu6sg}, we have 
$\varepsilon_3\circ E\pi^{10}_{26}=0$. 
From the fact that 
$\pi^3_{20}=\{\bar{\varepsilon}', \bar{\mu}_3, \eta_3\mu_4\sigma_{13}\}$, 
$\bar{\varepsilon}'\sigma_{20}=0$ \cite[I-Proposition 3.1(5)]{Od} and 
\begin{equation}\label{mu3sg2} 
\mu_3\sigma^2_{12}=0 \text{\ \cite[(2.9)]{MMO}},
\end{equation} 
we obtain $\pi^3_{20}\circ\sigma_{20}=\{\bar{\mu}_3\sigma_{20}\}$. Hence, we have
$$
{\rm Ind}\{\varepsilon_3,\varepsilon_{11}+\bar{\nu}_{11},\sigma_{19}\}_1=\{\bar{\mu}_3\sigma_{20}\}.
$$

The indeterminacy of $\{\bar{\nu}_6,\varepsilon_{14}+\bar{\nu}_{14},\sigma_{22}\}_1$ is $\bar{\nu}_6\circ E\pi^{13}_{29}+\pi^6_{23}\circ\sigma_{23}$. 
By the fact that $\pi^{13}_{29}=\{\sigma_{13}\mu_{20}\}$ \cite[Theorem 12.16]{T} and
(\ref{bnu6sg}), we have
$\bar{\nu}_6\circ E\pi^{13}_{29}=0$. By the fact that 
$\pi^6_{23}=\{P(E\theta), \nu_6\kappa_9, \bar{\mu}_6, \eta_6\mu_7\sigma_{16}\}$
\cite[Theorem 12.7]{T}, \eqref{x13et} and \eqref{mu3sg2}, 
we obtain 
$\pi^6_{23}\circ\sigma_{23}=\{P(\xi_{13}\eta_{31})\}$. Hence, we have
$$
{\rm Ind}\{\bar{\nu}_6,\varepsilon_{14}+\bar{\nu}_{14},\sigma_{22}\}_1
 =\{P(\xi_{13}\eta_{31})\}.
$$

The indeterminacy of $\{\nu_6,\eta_9,\bar{\sigma}_{10}\}_3$ is $\nu_6\circ E^3\pi^6_{24}+\pi^6_{11}\circ\bar{\sigma}_{11}$. Since 
$[\iota_6,\iota_6]\circ\bar{\sigma}_{11}=0$ \cite[(5.7)]{MMO}, we have 
$\pi^6_{11}\circ\bar{\sigma}_{11}=0$. From the fact that 
$E^3\pi^6_{24}=\{\sigma_9\zeta_{16}, \eta_9\bar{\mu}_{10}\}$, we obtain 
$\nu_6\circ E^3\pi^9_{27}=0$. Hence, we have
$$
{\rm Ind}\{\nu_6,\eta_9,\bar{\sigma}_{10}\}_3=0.
$$

Since $\varepsilon_n+\bar{\nu}_n=\sigma_n\eta_{n+7}$ for $n\ge 10$ (\ref{et9sg}),
 and (\ref{bnu6sg}), 
we have 
$$
\{\varepsilon_3,\sigma_{11},\eta_{18}\sigma_{19}\}_1\subset\{\varepsilon_3,\varepsilon_{11}+\bar{\nu}_{11},\sigma_{19}\}_1
$$
and
$$
\{\bar{\nu}_6,\sigma_{14},\eta_{21}\sigma_{22}\}_1\subset\{\bar{\nu}_6,\varepsilon_{14}+\bar{\nu}_{14},\sigma_{22}\}_1.
$$
We change the definitions of $\delta_3$ and $\bar{\sigma}'_6$ as follows:
\begin{equation}\label{chd}
\delta'_3\in\{\varepsilon_3,\sigma_{11},\eta_{18}\sigma_{19}\}_1, 
\end{equation}
\begin{equation}\label{ch}
\bar{\sigma}''_6\in\{\bar{\nu}_6,\sigma_{14},\eta_{21}\sigma_{22}\}_1.
\end{equation}

By relations \cite[I-Proposition 3.1.(2)]{Od} and (\ref{et9sg}), we have
$$
\zeta'\varepsilon_{22}=\zeta'\bar{\nu}_{22}=0 \ \mbox{and so}, \ \zeta'\eta_{22}\sigma_{23}=0.
$$
By using \cite[Theorem 12.6 and 12.16]{T}, 
$\varepsilon_3\sigma_{11}\mu_{18}=0$ (\ref{bnu6sg}), 
$\mu_3\sigma_{12}\eta_{19}\sigma_{20}=\mu_3\eta_{12}\sigma_{13}^2=0$ (\ref{et9sg2}),
$\eta_3\bar{\varepsilon}_4\eta_{19}\sigma_{20}
 =\eta_3\varepsilon_4\bar{\nu}_{12}\sigma_{20}=0$ (\ref{epep}) and (\ref{bnu6sg}),
$\bar{\nu}_6\sigma_{14}\nu_{21}=0$ (\ref{bnu6sg}),
$\mu_6\sigma_{15}\eta_{22}\sigma_{23}=\mu_6\eta_{15}\sigma_{16}^2=0$ (\ref{et9sg2})
and
$\eta_6\bar{\varepsilon}_7\eta_{22}\sigma_{23}
 =\eta_6\bar{\varepsilon}_7\sigma_{22}\eta_{29}=0$ (\ref{bnu6sg}),
we have 
$$
{\rm Ind}\{\varepsilon_3,\sigma_{11},\eta_{18}\sigma_{19}\}_1= 
\varepsilon_3\circ\pi^{11}_{27}+\pi^3_{19}\circ\eta_{19}\sigma_{20}=0
$$ 
and
$$
{\rm Ind}\{\bar{\nu}_6,\sigma_{14},\eta_{21}\sigma_{22}\}_1= 
\bar{\nu}_6\circ E\pi^{13}_{29}+\pi^6_{22}\circ\eta_{22}\sigma_{23}=\{
\bar{\nu}_6\sigma_{14}\mu_{21},\zeta_{11}\eta_{22}\sigma_{23}\}=0.
$$
It is easy to check that this changing gives no influences  
to the computations in \cite{MMO}. 

By \cite[Proposition 2.6]{T} and (\ref{nu10sg}), we have
\[H\{\nu_{11},\sigma_{14},\eta_{21}\sigma_{22}\}_1
 =-P^{-1}(\nu_{10}\sigma_{13})\circ\eta_{22}\sigma_{23}=\eta^2_{21}\sigma_{23}.
\]
Hence, by the relations
\cite[(3.8)]{MMO} and $H(\xi')=\eta_{21}\sigma_{22}$ \cite[Lemma  12.19]{T},
 we take
$$
\{\nu_{11},\sigma_{14},\eta_{21}\sigma_{22}\}_1\ni\xi'\eta_{29}\ \bmod\ 
E\pi^{10}_{29}=\{\bar{\sigma}_{11},\bar{\zeta}_{11}\}.
$$
Since $\langle\nu,\sigma,\eta\sigma\rangle=\bar{\sigma}$ by 
\cite[I-Propsition 3.3 (4)]{Od} and \cite[(3.9).i)]{T}, we have
\begin{equation}\label{hbsgm''}
H(\bar{\sigma}''_6)=\bar{\sigma}_{11}+\xi'\eta_{29}.
\end{equation}

We set 
$$
\delta'_n=E^{n-3}\delta'_3\ (n\ge 3),\ \delta'=E^\infty\delta'_3
$$
and 
$$
\bar{\sigma}''_n=E^{n-6}\bar{\sigma}''_6\ (n\ge 6),\ \bar{\sigma}''=E^\infty\bar{\sigma}''_6.
$$


Now we show 

\begin{lem}\label{bnu12}
$\bar{\sigma}''_{12}=\{\bar{\nu}_{12},\sigma_{20},\bar{\nu}_{27}\}=\{\nu_{12},\eta_{15},\bar{\sigma}_{16}\}=\bar{\bar{\sigma}}'_{12}$.
\end{lem}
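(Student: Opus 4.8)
The plan is to establish the three equalities in $\pi^{12}_{31}$ (or rather, the relevant stable-range group) by identifying the relevant Toda brackets with $\bar{\sigma}''_{12}$ one at a time, using the bracket manipulations and known relations collected in Section 2. First I would recall that, in the suspension range $n=12$, each of the three-fold brackets $\{\bar{\nu}_{12},\sigma_{20},\bar{\nu}_{27}\}$, $\{\nu_{12},\eta_{15},\bar{\sigma}_{16}\}$ and $\{\bar{\nu}_{12},\sigma_{20},\eta_{27}\sigma_{28}\}$ (the last one being the definition of $\bar{\sigma}''_{12}=E^6\bar{\sigma}''_6$ by (\ref{ch}) together with $\varepsilon_n+\bar\nu_n=\sigma_n\eta_{n+7}$) have zero or at least tightly controlled indeterminacy; I would compute these indeterminacies from the group structures of $\pi^{12}_{k}$ for the relevant $k$ exactly as in the analogous computations preceding (\ref{chd})--(\ref{ch}), invoking (\ref{bnu6sg}), (\ref{bnu9omg}), (\ref{n5}) and (\ref{mu3sg2}).

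The key middle step is to pass between the three brackets. Writing $\varepsilon_{27}+\bar\nu_{27}=\sigma_{27}\eta_{34}$ from (\ref{et9sg}), I would first show
\[
\{\bar{\nu}_{12},\sigma_{20},\bar{\nu}_{27}\}=\{\bar{\nu}_{12},\sigma_{20},\varepsilon_{27}+\bar{\nu}_{27}\}=\{\bar{\nu}_{12},\sigma_{20},\eta_{27}\sigma_{28}\}=\bar{\sigma}''_{12}
\]
using $\{\bar{\nu}_{12},\sigma_{20},\varepsilon_{27}\}=0$, which is Lemma \ref{bnusgep} (stabilized once, since that lemma already gives $E\{\bar\nu_{11},\sigma_{19},\varepsilon_{26}\}=\{\bar\nu_{12},\sigma_{20},\varepsilon_{27}\}=0$), plus (\ref{bnu6sg}) to see $\bar{\nu}_{12}\sigma_{20}=0$ so the bracket is additive in the last entry with the relevant pieces vanishing. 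For the identification with $\{\nu_{12},\eta_{15},\bar{\sigma}_{16}\}$, I would use $\bar{\nu}_n=\{\nu_n,\eta_{n+3},\nu_{n+4}\}$ (\cite[Lemma 6.2]{T}) to write $\{\bar{\nu}_{12},\sigma_{20},\bar{\nu}_{27}\}=\{\{\nu_{12},\eta_{15},\nu_{16}\},\sigma_{20},\bar{\nu}_{27}\}$ and then apply the Jacobi identity for Toda brackets, exactly in the style of the proof of Lemma \ref{bnusgep}, getting the sum of $\{\nu_{12},\{\eta_{15},\nu_{16},\sigma_{19}\},\bar{\nu}_{27}\}$ and $\{\nu_{12},\eta_{15},\{\nu_{16},\sigma_{19},\bar{\nu}_{26}\}\}$; the first term is killed because $\{\eta_{15},\nu_{16},\sigma_{19}\}=0$ by Lemma \ref{etnusg}(2), and in the second term $\{\nu_{16},\sigma_{19},\bar{\nu}_{26}\}$ should be recognizable (up to the stable relation $\langle\nu,\sigma,\bar\nu\rangle=\bar\sigma$ used in Lemma \ref{sgnep}) as $\bar{\sigma}_{16}$ modulo a controllable error, yielding $\{\nu_{12},\eta_{15},\bar{\sigma}_{16}\}$ plus terms lying in $\nu_{12}\circ\pi^{15}$ and $\pi^{12}\circ\bar\nu_{27}$ that one checks vanish. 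Finally $\bar{\bar{\sigma}}'_{12}=E^9\bar{\bar{\sigma}}'_3$ is by definition in $\{\nu_{12},\eta_{15},\bar{\sigma}_{16}\}$ (the bracket $\{\nu_6,\eta_9,\bar{\sigma}_{10}\}_3$ stabilized), and since we already noted its indeterminacy is $0$, the inclusion is an equality.

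The main obstacle I anticipate is the Jacobi-identity step: controlling the error terms when one expands $\{\{\nu_{12},\eta_{15},\nu_{16}\},\sigma_{20},\bar{\nu}_{27}\}$, in particular (a) verifying that $\{\nu_{16},\sigma_{19},\bar{\nu}_{26}\}$ really equals $\bar{\sigma}_{16}$ on the nose in this suspension range (rather than merely in the stable group), which may require tracking the element through the EHP sequence or invoking \cite[I-Proposition 3.3(4)]{Od} carefully, and (b) showing the ambient indeterminacies $\nu_{12}\circ\pi^{15}_{35}$ and $\pi^{12}_{23}\circ\bar{\nu}_{27}$-type terms vanish, which will use $\nu\eta=0$ (\ref{n2}), $\nu_{11}\sigma_{14}=0$, the relation $\nu_{10}\lambda=\sigma_{10}\kappa_{17}$ (\ref{nulm}) and (\ref{bnu6sg}) in combination, much as $\nu_{11}\circ\pi^{14}_{35}$ was handled in Lemma \ref{bnusgep}. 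Once those bookkeeping points are pinned down, the three equalities follow by chaining the displayed inclusions and equalities of brackets, all of which have been arranged to have trivial (or matching) indeterminacy.
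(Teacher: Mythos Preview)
Your proposal is correct and follows essentially the same route as the paper: split $\bar{\nu}_{27}$ via (\ref{et9sg}), kill the $\varepsilon$-piece with Lemma~\ref{bnusgep}, expand $\bar{\nu}_{12}=\{\nu_{12},\eta_{15},\nu_{16}\}$ via the Jacobi identity together with Lemma~\ref{etnusg}(2), and identify the remaining inner bracket as $\bar{\sigma}_{16}$. For your flagged obstacle (a) the paper simply invokes \cite[I-Proposition 3.4(3)]{Od} (the unstable statement, not 3.3(4)), and for (b) the relevant indeterminacy is $\nu_{12}\circ\pi^{15}_{36}$ (not $\pi^{15}_{35}$), which is dispatched using \cite[Theorem A]{Mi} together with (\ref{nu10sg}) and (\ref{nulmn}).
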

\begin{proof}
Notice that the indeterminacies of the brackets 
$$
{\rm Ind}\{\bar{\nu}_{11},\sigma_{19},\bar{\nu}_{26}\}
={\rm Ind} \{\bar{\nu}_{11},\sigma_{19},\eta_{26}\sigma_{27}\}=0,
$$ 
respectively, because 
$
\bar{\nu}_{11}\circ\pi^{19}_{35}=\{\bar{\nu}_{11}\omega_{19},\bar{\nu}_{11}\sigma_{19}\mu_{26}\}=0$ by (\ref{bnu9omg}) and (\ref{bnu6sg}),
$
\pi^{11}_{27}\circ\bar{\nu}_{27}=\{\sigma_{11}\mu_{18}\bar{\nu}_{27}\}=0
$ by (\ref{mepbn2}) and 
$\pi^{11}_{27}\circ\eta_{27}\sigma_{28}
 =\{\mu_{11}\sigma_{20}\eta_{27}\sigma_{28}\}=0$ by (\ref{et9sg2}).
So, by (\ref{et9sg}), Lemma \ref{bnusgep} and (\ref{ch}), we obtain 
$$
\{\bar{\nu}_{11},\sigma_{19},\bar{\nu}_{26}\}
\!=\!\{\bar{\nu}_{11},\sigma_{19},\varepsilon_{26}\}
 + \{\bar{\nu}_{11},\sigma_{19},\eta_{26}\sigma_{27}\}
\!=\!a[\iota_{11},\kappa_{11}]+\bar{\sigma}''_{11}
$$
for $a=0$ or $a=1$.
Hence, we obtain 
$$
\bar{\sigma}''_{12}\in E\{\bar{\nu}_{11},\sigma_{19},\bar{\nu}_{26}\}
\subset\{\bar{\nu}_{12},\sigma_{20},\bar{\nu}_{27}\}\ \bmod\ \bar{\nu}_{12}\circ\pi^{20}_{36}+\pi^{12}_{28}\circ\bar{\nu}_{28}=0.
$$

By the Jacobi identity of Toda brackets, \cite[Lemma 6.2]{T}, Lemma \ref{etnusg}.(2)
and \cite[I-Proposition 3.4(3)]{Od},
 we obtain 
\[\begin{split}
\bar{\sigma}''_{12}&=\{\bar{\nu}_{12},\sigma_{20},\bar{\nu}_{27}\}
  =\{\{\nu_{12},\eta_{15},\nu_{16}\},\sigma_{20},\bar{\nu}_{27}\}\\
&\equiv\{\nu_{12},\{\eta_{15},\nu_{16},\sigma_{19}\},\bar{\nu}_{27}\}
  +\{\nu_{12},\eta_{15},\{\nu_{16},\sigma_{19},\bar{\nu}_{26}\}\}\\
&\equiv\{\nu_{12},\eta_{15},\bar{\sigma}_{16}\}\ \bmod\ \nu_{12}\circ\pi^{15}_{36}.
\end{split}\]
Since $\pi^{15}_{36}=\{\eta_{15}\bar{\kappa}_{16},\sigma^3_{15}, (E^2\lambda)\nu_{33}\}\cong(\Z_2)^3$ \cite[Theorem A]{Mi}, (\ref{nu10sg}) and (\ref{nulmn}), we have 
$\nu_{12}\circ\pi^{15}_{36}=\{\nu_{12}(E^2\lambda)\nu_{33}\}=0$. 
This 
implies
$$
\{\nu_{12},\eta_{15},\bar{\sigma}_{16}\}=\bar{\sigma}''_{12}.
$$
and completes the proof.
\end{proof}

We show
\begin{lem}\label{dl'dl}
$\delta'_3\equiv\delta_3\ \bmod\ \bar{\mu}_3\sigma_{20}$, $\bar{\sigma}''_6\equiv\bar{\sigma}'_6\ \bmod\ 
P(\xi_{13})\eta_{29}$
\ and \  
$\bar{\sigma}''_6\equiv\bar{\bar{\sigma}}'_6+P(\xi_{13})\eta_{29}\ \bmod\ \nu_6\sigma_9\kappa_{16}$. 
\end{lem}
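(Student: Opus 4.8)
The plan is to compare three elements of $\pi^6_{26}$ (or its suspensions) all of which are ``$\bar\sigma$-type'' secondary operations attached to $\bar\nu$ and $\sigma$, and to track the discrepancies through the known indeterminacies computed just above. Concretely, I first recall that $\delta_3\in\{\varepsilon_3,\varepsilon_{11}+\bar\nu_{11},\sigma_{19}\}_1$ while $\delta'_3\in\{\varepsilon_3,\sigma_{11},\eta_{18}\sigma_{19}\}_1$, and since $\varepsilon_{11}+\bar\nu_{11}=\sigma_{11}\eta_{18}$ by \eqref{et9sg}, the inclusion $\{\varepsilon_3,\sigma_{11},\eta_{18}\sigma_{19}\}_1\subset\{\varepsilon_3,\varepsilon_{11}+\bar\nu_{11},\sigma_{19}\}_1$ already noted in the excerpt shows $\delta'_3$ and $\delta_3$ lie in the same coset; the difference therefore lies in ${\rm Ind}\{\varepsilon_3,\varepsilon_{11}+\bar\nu_{11},\sigma_{19}\}_1=\{\bar\mu_3\sigma_{20}\}$, which gives the first congruence. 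The analogous move with $\bar\nu_6$ in place of $\varepsilon_3$ — using $\{\bar\nu_6,\sigma_{14},\eta_{21}\sigma_{22}\}_1\subset\{\bar\nu_6,\varepsilon_{14}+\bar\nu_{14},\sigma_{22}\}_1$ and the computed indeterminacy ${\rm Ind}\{\bar\nu_6,\varepsilon_{14}+\bar\nu_{14},\sigma_{22}\}_1=\{P(\xi_{13}\eta_{31})\}=\{P(\xi_{13})\eta_{29}\}$ — yields $\bar\sigma''_6\equiv\bar\sigma'_6\bmod P(\xi_{13})\eta_{29}$.

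For the third congruence I would relate $\bar\sigma''_6\in\{\bar\nu_6,\sigma_{14},\eta_{21}\sigma_{22}\}_1$ to $\bar{\bar\sigma}'_6\in\{\nu_6,\eta_9,\bar\sigma_{10}\}_3$. The natural bridge is the Jacobi (juggling) identity applied to the factorization $\bar\nu_n=\{\nu_n,\eta_{n+3},\nu_{n+4}\}$ from \cite[Lemma 6.2]{T}: one writes $\{\bar\nu_6,\sigma_{14},\eta_{21}\sigma_{22}\}\supset\{\{\nu_6,\eta_9,\nu_{10}\},\sigma_{14},\eta_{21}\sigma_{22}\}$ and expands it, modulo appropriate suspension/desuspension, into $\{\nu_6,\{\eta_9,\nu_{10},\sigma_{13}\},\eta\sigma\}+\{\nu_6,\eta_9,\{\nu_{10},\sigma_{13},\eta_{20}\sigma_{21}\}\}$. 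The middle bracket $\{\eta,\nu,\sigma\}$ is controlled by Lemma \ref{etnusg} (it is $2[\iota_{10},\nu_{10}]$ at level $10$, vanishing after one suspension), and the inner bracket $\{\nu,\sigma,\eta\sigma\}$ should be recognizable as (a suspension of) $\bar\sigma$ by \cite[I-Proposition 3.3(4)]{Od} together with \eqref{et7sg}, exactly as used to establish \eqref{hbsgm''}. This produces $\bar\sigma''_6\equiv\bar{\bar\sigma}'_6$ modulo the error terms coming from the nonzero middle bracket and from the indeterminacies; the computation of ${\rm Ind}\{\nu_6,\eta_9,\bar\sigma_{10}\}_3=0$ already done above pins these errors down.

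The main obstacle — and the step that needs genuine care rather than bookkeeping — is identifying the leftover term $P(\xi_{13})\eta_{29}$ and the further ambiguity $\nu_6\sigma_9\kappa_{16}$ in the third congruence. One must argue that after the Jacobi expansion the extra contribution of the level-$10$ bracket $\{\eta_{10},\nu_{11},\sigma_{14}\}=2[\iota_{10},\nu_{10}]$, pushed through the outer composition, either vanishes or is absorbed into $\nu_6\sigma_9\kappa_{16}$ (via relations like \eqref{nulm}, $\nu_{10}\lambda=\sigma_{10}\kappa_{17}$, and \eqref{nulmn}), and that the Hopf-invariant discrepancy forces the $P(\xi_{13})\eta_{29}$ correction — here the identification $H(\bar\sigma''_6)=\bar\sigma_{11}+\xi'\eta_{29}$ from \eqref{hbsgm''}, combined with Lemma \ref{HPx13} giving $H(P(\xi_{13}\eta_{31}))=\xi'\eta_{29}$, is the key consistency check. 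So I would end the proof by taking Hopf invariants (or $E^\infty$) of all three candidate elements and verifying that the asserted congruences are the only ones compatible with $H(\bar\sigma''_6)$, $H(\delta'_3)$ and the known group structures of $\pi^3_{20}$, $\pi^6_{23}$, and $\pi^6_{26}$.
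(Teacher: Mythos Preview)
Your treatment of the first two congruences is correct and matches the paper exactly: both follow from the inclusions of brackets already recorded in the text together with the computed indeterminacies $\{\bar\mu_3\sigma_{20}\}$ and $\{P(\xi_{13}\eta_{31})\}$.

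For the third congruence your primary strategy---running the Jacobi identity on $\bar\nu_6=\{\nu_6,\eta_9,\nu_{10}\}$ at level $6$---is not what the paper does, and it has real obstacles. The identification $\bar\nu_n=\{\nu_n,\eta_{n+3},\nu_{n+4}\}$ from \cite[Lemma 6.2]{T} is only cited for $n\ge 7$; more seriously, matching the suspension subscripts so that the Jacobi expansion of $\{\{\nu_6,\eta_9,\nu_{10}\},\sigma_{14},\eta_{21}\sigma_{22}\}_1$ is even defined, and then identifying the inner bracket $\{\nu,\sigma,\eta\sigma\}$ with $\bar\sigma_{10}$ at that unstable level, is delicate. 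Your remark that the leftover from $\{\eta_{10},\nu_{11},\sigma_{14}\}=2[\iota_{10},\nu_{10}]$ ``either vanishes or is absorbed into $\nu_6\sigma_9\kappa_{16}$'' is exactly the kind of step that does not go through cleanly at level $6$.

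The paper's argument is shorter and avoids all of this. It takes as \emph{primary} what you called a consistency check: from $H(\bar{\bar\sigma}'_6)=\bar\sigma_{11}$ \cite[(3.8)]{MMO}, $H(\bar\sigma''_6)=\bar\sigma_{11}+\xi'\eta_{29}$ \eqref{hbsgm''}, and $H(P(\xi_{13})\eta_{29})=\xi'\eta_{29}$ (Lemma \ref{HPx13}(2)), one gets immediately
\[
\bar\sigma''_6\equiv\bar{\bar\sigma}'_6+P(\xi_{13})\eta_{29}\ \bmod\ E\pi^5_{29}=\{\delta_6,\ \bar\mu_6\sigma_{23},\ \nu_6\sigma_9\kappa_{16}\}.
\]
The crucial step you are missing is how to cut this three-fold ambiguity down to $\{\nu_6\sigma_9\kappa_{16}\}$: the paper suspends and invokes Lemma \ref{bnu12}, which says $\bar\sigma''_{12}=\bar{\bar\sigma}'_{12}$. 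Since $E^6(P(\xi_{13})\eta_{29})=0$ and $\nu_{12}\sigma_{15}\kappa_{22}=0$, while $\delta_{12}$ and $\bar\mu_{12}\sigma_{29}$ survive and are independent, those two coefficients must vanish. Note that the Jacobi manoeuvre you propose \emph{is} carried out in the paper---but inside the proof of Lemma \ref{bnu12}, at level $12$, where the subscripts cooperate and the indeterminacies are zero; it is then fed back to level $6$ via the Hopf-invariant/EHP argument rather than attempted directly there.
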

\begin{proof}
By the fact that $\pi^3_{20}=\{\bar{\mu}_3,\eta_3\mu_4\sigma_{13},\bar{\varepsilon}'\}$, $\mu_3\sigma^2_{12}=0$ (\ref{mu3sg2}) and $\bar{\varepsilon}'\sigma_{20}=0$ \cite[I-Proposition 3.1(5)]{Od}, we get the first half.  

We observe 
$$
\bar{\sigma}''_6\in\{\bar{\nu}_6,\sigma_{14},\eta_{21}\sigma_{22}\}_1\subset\{\bar{\nu}_6,\varepsilon_{14}+\bar{\nu}_{14},\sigma_{22}\}_1\ \bmod\ \bar{\nu}_6\circ E\pi^{13}_{29}+\pi^6_{23}\circ\sigma_{23}.
$$
We know $\pi^6_{23}=\{P(E\theta),\nu_6\kappa_9,\bar{\mu}_6,\eta_6\mu_7\sigma_{16}\}$, 
$\nu_6\kappa_9\sigma_{23}=0$ (\ref{k7s}) and 
$P(E\theta)\sigma_{23}=P(\xi_{13}\eta_{31})$ (\ref{x13et}). 
This leads to the second. 

By the relations $H(\bar{\bar{\sigma}}'_6)=\bar{\sigma}_{11}$ \cite[(3.8)]{MMO}, 
(\ref{hbsgm''}) and Lemma \ref{HPx13}(2), we have
$$
H(\bar{\sigma}''_6)=H(\bar{\bar{\sigma}}'_6+P(\xi_{13})\eta_{29})
$$
and
$$
\bar{\sigma}''_6\equiv\bar{\bar{\sigma}}'_6+P(\xi_{13})\eta_{29}\ \bmod\ E\pi^5_{29}=\{\delta_6,\bar{\mu}_6\sigma_{23},\nu_6\sigma_9\kappa_{16}\}.
$$
Hence, by the fact that $\bar{\sigma}''_{12}=\bar{\bar{\sigma}}'_{12}$ (Lemma \ref{bnu12}),  we obtain the second half.
This completes the proof. 
\end{proof}

\section{Proof of Theorem \ref{main}, II: Some relations in homotopy groups of the rotation group}
Let $SO(n)$ be the $n$-th rotation group, $i_{k,n}: SO(k)\hookrightarrow SO(n)\ (k\leq n)$ the inclusion, $i_n=i_{n,n+1}$ and $p_n: SO(n)\to SO(n)/SO(n-1)=S^{n-1}$ the projection. Let $\Delta: \pi_k(S^n)\to\pi_{k-1}(SO(n))$ be the connecting map associated with the bundle $p_{n+1}: SO(n+1)\to S^n$.
Suppose that there exist elements $\alpha\in\pi_k(S^n)$ and $\beta\in\pi_k(SO(n+1))$ satisfying the relation 
$$
{p_{n+1}}_*\beta=\alpha.
$$
Then, $\beta$ is called a lift of $\alpha$ and is written $[\alpha]$. 
For a lift $[\alpha]\in\pi_k(SO(m))$, we write $[\alpha]_n={i_{m,n}}_*[\alpha]\in\pi_k(SO(n))$ for $m\leq n$. 

Let $J: \pi_k(SO(n))\to\pi_{k+n}(S^n)$ be the $J$-homomorphism. Denote by $R^n_k$ the $2$-primary component of $\pi_k(SO(n))$. 
We use the exact sequence induced from the fibration $p_n$: 
$$
({\mathcal{R}}^n_k) \hspace{0.5cm}
\cdots\rarrow{}\pi^{n}_{k+1}\rarrow{\Delta}R^n_k\rarrow{{i_{n+1}}_*} R^{n+1}_k\rarrow{{p_{n+1}}_*}\pi^n_k\rarrow{}\cdots. 
$$

As it is well-known \cite[p. 233-4]{WG},
$$
J(\Delta\alpha)=\pm[\iota_n,\alpha] \ (\  \alpha\in\pi_k(S^n)).
$$
We also know
\begin{equation}\label{Js}
J([\alpha]\beta)=J[\alpha]\circ E^n\beta\ (\alpha\in\pi_k(S^n), \beta\in\pi_l(S^k)).
\end{equation}

Although the following is well-known, we show 
\begin{lem}\label{Ji0}
Assume that elements $\alpha \in\pi_{h}(SO(n))$,
$\beta\in \pi_{l}(S^h)$ and $\gamma\in \pi_{k-1}(S^l)$ satisfy the conditions
$\alpha\beta=0$ and $\beta\gamma=0$. Then the Toda bracket $\{J\alpha,E^n\beta,E^n\gamma\}_n$ is well-defined and 
$$J\{\alpha,\beta,\gamma\} \subset (-1)^n\{J\alpha,E^n\beta,E^n\gamma\}_n.$$
\end{lem}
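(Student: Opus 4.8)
The plan is to exhibit the unstable $J$-homomorphism $J:\pi_h(SO(n))\to\pi_{h+n}(S^n)$ as induced by a map of spaces which is natural under precomposition, and then to push a Toda-bracket representative of $\{\alpha,\beta,\gamma\}$ through it. To begin, the bracket $\{J\alpha,E^n\beta,E^n\gamma\}_n$ is well-defined because its two adjacent compositions vanish: $E^n\beta\circ E^n\gamma=E^n(\beta\gamma)=0$, while $J\alpha\circ E^n\beta=J(\alpha\circ\beta)=J(0)=0$ by the composition rule for $J$, i.e. the general form of \eqref{Js} valid for any $\alpha\in\pi_h(SO(n))$.

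Recall next that, by the usual construction via the Hopf construction \cite{WG}, $J\alpha=H(\hat\alpha):S^h*S^{n-1}=S^{h+n}\to ES^{n-1}=S^n$, where $\hat\alpha:S^h\times S^{n-1}\to S^{n-1}$ is $(x,y)\mapsto\alpha(x)(y)$; moreover $H$ is defined for a map out of any space in the first coordinate, is compatible with restriction to a subspace, and is natural there, $H(\hat\alpha\circ(f\times{\rm id}))=H(\hat\alpha)\circ(f*{\rm id})$. One then represents an arbitrary element of $\{\alpha,\beta,\gamma\}\subset\pi_k(SO(n))$ as in \cite[Proposition 1.7]{T}: since $\alpha\circ\beta=0$, extend $\alpha$ to $\bar\alpha:C_\beta\to SO(n)$ over the mapping cone $C_\beta=S^h\cup_\beta CS^l$, and since $\beta\circ\gamma=0$, pick a coextension $\tilde\gamma:S^k\to C_\beta$ of $\gamma$ (so $p_\beta\circ\tilde\gamma=E\gamma$ for the collapse map $p_\beta:C_\beta\to S^{l+1}$); then $\bar\alpha\circ\tilde\gamma$ is such a representative. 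Forming $H$ of $\widehat{\bar\alpha}:C_\beta\times S^{n-1}\to S^{n-1}$ and using $C_\beta*S^{n-1}\cong E^nC_\beta=C_{E^n\beta}$, one obtains $J\bar\alpha:=H(\widehat{\bar\alpha}):C_{E^n\beta}\to S^n$, which restricts on the bottom sphere $S^{h+n}$ to $J\alpha$ and is thus an extension of $J\alpha$ over $C_{E^n\beta}$; naturality of $H$ then gives
\[
J(\bar\alpha\circ\tilde\gamma)=H(\widehat{\bar\alpha}\circ(\tilde\gamma\times{\rm id}))=H(\widehat{\bar\alpha})\circ(\tilde\gamma*{\rm id}_{S^{n-1}})=J\bar\alpha\circ E^n\tilde\gamma,
\]
in which $E^n\tilde\gamma:S^{k+n}\to C_{E^n\beta}$ is a coextension of $E^n\gamma$, being the $n$-fold suspension of the coextension $\tilde\gamma$ of $\gamma$. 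By \cite[Proposition 1.7]{T}, applied exactly as in the proof of Lemma \ref{Miy}, the composite of an extension of $J\alpha$ over $C_{E^n\beta}$ with such an $n$-fold suspended coextension represents, up to the sign $(-1)^n$, an element of $\{J\alpha,E^n\beta,E^n\gamma\}_n$. Hence $J(\bar\alpha\circ\tilde\gamma)\in(-1)^n\{J\alpha,E^n\beta,E^n\gamma\}_n$, and as $\bar\alpha\circ\tilde\gamma$ ranges over $\{\alpha,\beta,\gamma\}$ this yields the asserted inclusion.

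The content that requires care is the twofold ``folklore'' input used here: first, that $J$ is realized by a map functorial in its domain, so that it carries the cone-extension $\bar\alpha$ to a genuine extension of $J\alpha$ over $C_{E^n\beta}$ and converts composition into composition after $n$-fold suspension (equivalently one may use composition with the adjoint $SO(n)\to\Omega^nS^n$ and naturality of the loop--suspension adjunction in the source); and second, the sign bookkeeping in \cite[Proposition 1.7]{T} that produces the factor $(-1)^n$. The main obstacle, such as it is, is to make the identifications $C_\beta*S^{n-1}\cong E^nC_\beta\cong C_{E^n\beta}$ and the sign $(-1)^n$ compatible with one another; once these are pinned down, the conclusion is a formal manipulation of Toda brackets.
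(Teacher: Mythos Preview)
Your argument is correct, but the paper takes a noticeably shorter route. Instead of working with the Hopf construction $H$ directly and tracking extensions and coextensions by hand via \cite[Proposition~1.7]{T}, the paper factors the $J$-homomorphism as $J={G_n}_*\circ E^n$, where $G_n:E^nSO(n)\to S^n$ is the Hopf construction on the action of $SO(n)$ on $S^{n-1}$, and then applies the two standard bracket identities \cite[Propositions~1.2--1.3]{T}: first $E^n\{\alpha,\beta,\gamma\}\subset(-1)^n\{E^n\alpha,E^n\beta,E^n\gamma\}_n$, and then post-composition with $G_n$ gives $\{{G_n}_*E^n\alpha,E^n\beta,E^n\gamma\}_n=\{J\alpha,E^n\beta,E^n\gamma\}_n$. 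Your approach essentially unwinds those two propositions at the level of explicit representatives, at the cost of having to verify the identifications $C_\beta*S^{n-1}\cong E^nC_\beta\cong C_{E^n\beta}$ and the sign $(-1)^n$ yourself; the paper's factorization through $E^nSO(n)$ delegates all of that bookkeeping to Toda's standing lemmas. Either way the well-definedness step is the same, since $J\alpha\circ E^n\beta={G_n}_*E^n(\alpha\beta)=0$.
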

\begin{proof}
We recall \cite[(11.2)]{T} that the $J$-homomorphism is defined by the composition
$$J={G_n}_*\circ E^n : \pi_{k}(SO(n)) \to \pi_{k+n}(E^nSO(n)) \to \pi_{k+n}(S^n).$$
Here $G_n:E^nSO(n)\to S^n$ is the Hopf construction obtained from the action of $SO(n)$ as the rotations of $S^{n-1}$. 
Since $J\alpha\circ E^n\beta={G_n}_*\circ E^n\alpha\circ E^n\beta=0$,
the Toda bracket $\{J\alpha,E^n\beta,E^n\gamma\}_n$ is well-defined.
By \cite[Propositions 1.2--3]{T}, we have
\begin{eqnarray*}
J\{\alpha,\beta,\gamma\} &=& {G_n}_*\circ E^n\{\alpha,\beta,\gamma\}\\
&\subset& {G_n}_*\circ (-1)^n \{E^n\alpha,E^n\beta,E^n\gamma\}_{n}\\
&\subset& (-1)^n\{{G_n}_*\circ E^n\alpha,E^n\beta,E^n\gamma\}_{n}\\
&=& (-1)^n\{J\alpha,E^n\beta,E^n\gamma\}_{n}.
\end{eqnarray*}
\end{proof}

By (\ref{Ji0}), we have
\begin{equation}\label{Ji}
J({i_{m,n}}_*\gamma)=(-1)^{n-m}E^{n-m}J\gamma\ (\gamma\in\pi_k(SO(m)).
\end{equation}

Since $\Delta\iota_7\in\pi_6(SO(7))=0$ \cite[Table, p. 162]{Ke}, 
there exists a lift $[\iota_7]\in R^8_7$ of $\iota_7$. By \cite[(4.1)]{HKM}, we have
\begin{equation}\label{Dsm9}
\Delta\sigma_9=[\iota_7]_9(\bar{\nu}_7+\varepsilon_7).
\end{equation}
By (\ref{bnu6sg}), we obtain $\Delta(\sigma^2_9)=[\iota_7]_9(\bar{\nu}_7+\varepsilon_7)\sigma_{15}=0$ 
and so, there exists a lift $[\sigma^2_9]\in R^{10}_{23}$\ of $\sigma^2_9$. 

We show 
\begin{lem}\label{id}
$\{p_{n+1},i_{n+1},\Delta\iota_n\}\ni \iota_n\ \bmod\ 2\iota_n$.
\end{lem}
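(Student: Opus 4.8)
The statement $\{p_{n+1},i_{n+1},\Delta\iota_n\}\ni\iota_n\bmod 2\iota_n$ is a general fact about the fibration sequence
$$S^{n-1}\xrightarrow{\ i_n\ }SO(n+1)\xrightarrow{\ p_{n+1}\ }S^n,$$
or more precisely about the fibre sequence $SO(n)\xrightarrow{i_{n+1}}SO(n+1)\xrightarrow{p_{n+1}}S^n$ together with the connecting map $\Delta\colon\pi_n(S^n)\to\pi_{n-1}(SO(n))$. The first thing I would check is that the bracket $\{p_{n+1},i_{n+1},\Delta\iota_n\}\subset\pi_n(S^n)$ is even defined: this needs $p_{n+1}\circ i_{n+1}\simeq *$, which is immediate since $i_{n+1}$ lands in the fibre, and $(i_{n+1})_*\Delta\iota_n=0$, which is exactness of $({\mathcal R}^n_{n-1})$ at $R^n_{n-1}$ (the composite of two consecutive maps in the homotopy exact sequence of the fibration is zero). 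So the Toda bracket is a well-defined coset of $p_{n+1}\circ\pi_n(SO(n+1))+(\text{stuff})$ inside $\pi_n(S^n)=\Z\{\iota_n\}$ (plus torsion), and the indeterminacy statement ``$\bmod\ 2\iota_n$'' just records that $p_{n+1}$ may hit $2\iota_n$ but not $\iota_n$ — indeed $\Delta\iota_n\ne 0$ in general would force $p_{n+1}$ not to be surjective onto $\pi_n$, while $[\iota_n,\iota_n]$ having order dividing $2$ after suspension is the usual reason $2\iota_n$ is in the image.

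\textbf{Key steps.} The cleanest route is to unwind the definition of the Toda bracket via coextensions/extensions. By \cite[Proposition 1.7]{T}, a representative of $\{p_{n+1},i_{n+1},\Delta\iota_n\}$ is built from: an extension of $p_{n+1}$ over the mapping cone $C_{i_{n+1}}=SO(n+1)\cup_{i_{n+1}}C\,SO(n)$, composed with a coextension of $\Delta\iota_n$ to a map $S^n\to E\,SO(n)\hookrightarrow C_{i_{n+1}}$. The point is that $C_{i_{n+1}}$ is, up to the relevant range, the cofibre of the fibre inclusion, and the universal property of the fibration identifies the relevant piece with $S^n$ itself: the extension of $p_{n+1}$ to $C_{i_{n+1}}$ exists because $p_{n+1}\circ i_{n+1}\simeq *$, and chasing through the identification one finds the composite is homotopic to the identity of $S^n$ modulo the indeterminacy. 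Equivalently — and this may be the slickest phrasing — one invokes the standard fact (essentially \cite[Proposition 2.6]{T} or the Barratt--Puppe comparison) that for a fibration $F\to E\xrightarrow{p} B$ with connecting map $\partial$, one has $\iota_B\in\{p,\,(\text{fibre incl}),\,\partial\iota_B\}$ up to indeterminacy; here $F=SO(n)$, $E=SO(n+1)$, $B=S^n$, $\partial=\Delta$.

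\textbf{Main obstacle.} The substantive point is pinning down the indeterminacy precisely as $2\iota_n$ rather than something larger. The indeterminacy of $\{p_{n+1},i_{n+1},\Delta\iota_n\}$ is $p_{n+1}\circ\pi_n(SO(n+1))+(\Delta\iota_n)^*\pi_{n+1}(?)$-type terms; the first summand is controlled by $\mathrm{Im}({p_{n+1}})_*\subseteq\pi_n(S^n)$, and from the exact sequence $({\mathcal R}^n_n)$ this image is exactly $\ker(\Delta\colon\pi_n(S^n)\to R^n_{n-1})$. Since $\Delta\iota_n$ generates a subgroup whose image under $J$ is $\pm[\iota_n,\iota_n]$, and $[\iota_n,\iota_n]$ is known to have order $2$ after one suspension (and the relevant $\Delta\iota_n$ here, for the $n$ in play, is a nonzero element of order $2$ in $R^n_{n-1}$ by the tables of \cite{Ke}), the image of $({p_{n+1}})_*$ is precisely $2\Z\cdot\iota_n$, giving indeterminacy $\{2\iota_n\}$. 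So the plan is: (i) well-definedness from exactness of $({\mathcal R}^n_{n-1})$; (ii) $\iota_n\in$ the bracket by the universal-property/coextension computation; (iii) indeterminacy $=2\iota_n$ from $\mathrm{Im}({p_{n+1}})_*=\ker\Delta=2\Z\iota_n$, using that $\Delta\iota_n$ has order $2$. Step (iii), matching the indeterminacy exactly, is where one must be careful and cite the precise order of $[\iota_n,\iota_n]$ (or of $\Delta\iota_n$) for the relevant $n$.
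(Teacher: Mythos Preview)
Your plan has the right intuition but two genuine gaps.

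\textbf{Step (ii) is not actually proved.} You assert as a ``standard fact'' that for any fibration $F\xrightarrow{i}E\xrightarrow{p}B$ with connecting map $\partial$ one has $\iota_B\in\{p,i,\partial\iota_B\}$, but you give no argument, and the coextension/extension picture you sketch does not go through as stated: the mapping cone of $SO(n)\hookrightarrow SO(n+1)$ is \emph{not} homotopy equivalent to $S^n$, only $(2n-2)$-equivalent to it, so identifying your $\bar p\circ\tilde\gamma$ with $\pm\iota_n$ needs real work. The paper takes a different, concrete route: it factors through real projective space via the reflection embedding $j_n\colon P^{n-1}\to SO(n)$, using the identities
\[
\Delta\iota_n=j_n\gamma_{n-1},\qquad i\circ j_n=j_{n+1}\circ i'_n,\qquad p_{n+1}\circ j_{n+1}=p'_n,
\]
where $\gamma_{n-1}\colon S^{n-1}\to P^{n-1}$ is the double cover, $i'_n\colon P^{n-1}\hookrightarrow P^n$ the cell inclusion, and $p'_n\colon P^n\to S^n$ the collapse. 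These let one pass (modulo indeterminacy) from the original bracket to $\{p'_n,\,i'_n,\,\gamma_{n-1}\}$, which lives in the honest cofibre sequence $S^{n-1}\xrightarrow{\gamma_{n-1}}P^{n-1}\xrightarrow{i'_n}P^n\xrightarrow{p'_n}S^n$; here the coextension of $\gamma_{n-1}$ is the characteristic map of the top cell and the extension of $p'_n$ is the equivalence $C_{i'_n}\simeq S^n$, so this bracket visibly contains $\iota_n$.

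\textbf{Step (iii) is wrong as stated.} You claim the indeterminacy is $(p_{n+1})_*\pi_n(SO(n+1))=\ker\Delta=2\Z\iota_n$. But for $n$ even $\Delta\iota_n$ has infinite order (its $J$-image is $[\iota_n,\iota_n]$), so $\ker\Delta=0$ and this summand is zero. The $2\iota_n$ must then come from the \emph{other} indeterminacy term, $[E\,SO(n),S^n]\circ E(\Delta\iota_n)$, which you dismiss as ``stuff''. The paper computes both terms via the projective-space factorisation and gets
\[
(p_{n+1})_*\pi_n(SO(n+1))=\{(1+(-1)^{n-1})\iota_n\},\qquad [EP^{n-1},S^n]\circ E\gamma_{n-1}=\{(1+(-1)^{n})\iota_n\};
\]
exactly one of these is $2\iota_n$ and the other is $0$, depending on the parity of $n$, so together they yield $\{2\iota_n\}$ for every $n$. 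Your argument, which looks only at the first term and asserts it is always $2\Z\iota_n$, fails for all even $n$.
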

\begin{proof}
Let $P^n$ be the real $n$ dimensional projective space, $\gamma_n: S^n\to P^n$ the projection. Let $j_n: \P^{n-1}\to SO(n)$ be the canonical inclusion. We know 
$$
\Delta\iota_n=j_n\gamma_{n-1},\ i_n\circ j_n=j_{n+1}\circ i'_n,\ p_n\circ j_n=p'_{n-1},
$$
where $i'_n: \P^{n-1}\to\P^n$ is the inclusion and $p'_n: \P^n\to S^n$ the collapsing map. 
We have 
\begin{eqnarray*}
\{p_{n+1},i_{n+1},\Delta\iota_n\}
&=&\{p_{n+1},i_{n+1},j_n\gamma_{n-1}\}\\
&\subset&\{p_{n+1},i_n\circ j_n,\gamma_{n-1}\}\\
&=&\{p_{n+1},j_{n+1}\circ i'_n,\gamma_{n-1}\}\\
&\supset&\{p_{n+1}\circ j_{n+1},i'_n,\gamma_{n-1}\}\\
&=&\{p'_n,i'_n,\gamma_{n-1}\}\\
&\ni&\iota_n\ \bmod\ p_{n+1}\circ\pi_n(SO(n+1))+[E\P^{n-1},S^n]\circ E\gamma_{n-1}.
\end{eqnarray*}
Since ${p_{n+1}}_* \pi_n(SO(n+1))=\{{1+(-1)^{n-1}}\iota_n\}$ and $[E\P^{n-1},S^n]\circ E\gamma_{n-1}=\{E(p'_{n-1}\circ\gamma_{n-1})\}=
\{{1+(-1)^n}\iota_n\}$, we obtain 
$$
p_{n+1}\circ\pi_n(SO(n+1))+[E\P^{n-1},S^n]\circ E\gamma_{n-1}=\{2\iota_n\}. 
$$
This completes the proof. 
\end{proof}

Let us recall the element $\psi_{10}\in\pi^{10}_{33}$ \cite[(4.27)]{MMO}. We show
\begin{lem}\label{Jim}
$J[\iota_7]=\sigma_8$ \ and \ $J[\sigma^2_9]= \psi_{10}$. 
\end{lem}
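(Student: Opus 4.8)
The plan is to identify each of the two classes $J[\iota_7]$ and $J[\sigma^2_9]$ by computing in the exact sequences $({\mathcal R}^n_k)$ and using the $J$-homomorphism's compatibility with $\Delta$ and composition recorded in \eqref{Js}, \eqref{Ji} and Lemma \ref{Ji0}. For the first assertion, I would start from the lift $[\iota_7]\in R^8_7$ whose existence was noted just above (since $\Delta\iota_7\in\pi_6(SO(7))=0$). Applying $J$ to $[\iota_7]$ lands in $\pi^8_{15}$; since ${p_8}_*[\iota_7]=\iota_7$, the image $J[\iota_7]$ must be detected by the composite $\pi_{15}^8\to\pi_{15}^{8}$ through the bundle structure, and the standard computation of $\pi^8_{15}=\Z\{\sigma_8\}\oplus\Z_{120}\{E\sigma'\}$ together with $J(\Delta\iota_8)=\pm[\iota_8,\iota_8]=\pm(2\sigma_8-E\sigma')$ from \eqref{n4} pins down $J[\iota_7]$ up to the image of $\Delta$; a degree/Hopf-invariant or Euler-class argument (the classical fact that the stable framing of $S^7$ coming from $\mathrm{Spin}(8)$ has $J$-image $\sigma$) then forces $J[\iota_7]=\sigma_8$ exactly. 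I expect Lemma \ref{id} to be the tool that removes the ambiguity: feeding $[\iota_7]$ through the Toda bracket $\{p_8,i_8,\Delta\iota_7\}\ni\iota_7 \bmod 2\iota_7$ and applying Lemma \ref{Ji0} converts this into a relation $J[\iota_7]\in\{J(\text{something}),\ldots\}$ that must equal $\sigma_8$ modulo $2\sigma_8$, and the torsion-free part of $\pi^8_{15}$ then gives equality.

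For the second assertion, the key is the relation \eqref{Dsm9}, $\Delta\sigma_9=[\iota_7]_9(\bar\nu_7+\varepsilon_7)$, which combined with \eqref{bnu6sg} ($\bar\nu_6\sigma_{14}=0$, $\varepsilon_3\sigma_{11}=0$) gives $\Delta(\sigma_9^2)=0$ and hence the lift $[\sigma_9^2]\in R^{10}_{23}$. I would then write $[\sigma_9^2]$ as (essentially) $[\iota_9]_{10}\circ\sigma_{16}^2$ up to an element coming from $\Delta$, i.e. up to $\pi^9_{23}\circ(\text{stuff})$, and apply $J$: using \eqref{Js}, $J([\iota_9]_{10}\sigma_{16}^2)=J[\iota_9]_{10}\circ E^{10}(\sigma_{16}^2)$, while \eqref{Ji} relates $J[\iota_9]_{10}$ back to $E^{?}J[\iota_7]=E^{?}\sigma_8$. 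This should produce a candidate value in $\pi^{10}_{33}$ matching the definition of $\psi_{10}$ from \cite[(4.27)]{MMO}; the remaining task is to check that the indeterminacy introduced by the choice of lift (the image of $\Delta:\pi^9_{24}\to R^9_{23}$ pushed into $R^{10}_{23}$, then sent by $J$) is zero or is absorbed, so that equality holds.

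The main obstacle I anticipate is exactly this indeterminacy bookkeeping in the second half: the lift $[\sigma_9^2]$ is only well-defined modulo $\Im(\Delta:\pi^{10}_{24}\to R^{10}_{23})$ (equivalently the ambiguity in lifting is controlled by the exact sequence $({\mathcal R}^{10}_{23})$), and one must show that $J$ of this ambiguity subgroup either vanishes or lies in the subgroup generated by the other summands of $\pi^{10}_{33}$, so that the identification with $\psi_{10}$ is unambiguous. This will require knowing the structure of $\pi^{10}_{24}$ and $R^{9}_{23}, R^{10}_{23}$ fairly explicitly, and invoking relations such as $\sigma_{11}\bar\nu_{18}=\sigma_{11}\varepsilon_{18}=0$ \eqref{n5} and $\sigma_9(\bar\nu_{16}+\varepsilon_{16})=[\iota_9,\sigma_9]$ to kill unwanted terms. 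The first half, by contrast, I expect to be short once Lemma \ref{id} and \eqref{n4} are in hand, since the torsion-free part of $\pi^8_{15}$ leaves no room for error.
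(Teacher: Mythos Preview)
For the first assertion the paper does not argue at all: it simply cites \cite[(2.2)]{HKM}. Your sketch via \eqref{n4} and Lemma~\ref{id} is plausible but more elaborate than what is needed here.

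The real issue is the second assertion, where your plan has a genuine gap. You propose to realise $[\sigma_9^2]$ as a composite ``$[\iota_9]_{10}\circ\sigma^2$'' up to $\Delta$-indeterminacy and then apply \eqref{Js}. But you never establish that a lift $[\iota_9]\in R^{10}_9$ exists, and if instead you mean $[\iota_7]_{10}$ then $p_{10}\circ[\iota_7]_{10}=0$, so no composite $[\iota_7]_{10}\circ\alpha$ is ever a lift of $\sigma_9^2$. Either way you have not produced a concrete lift to which $J$ can be applied, and even if you had, $\psi_{10}$ is \emph{defined} (in \cite[(4.27)]{MMO}) as a Toda bracket, not as a composition, so your proposed ``matching'' step would still require independent work you have not outlined.

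The missing idea, which is exactly what the paper does, is to construct the lift itself as a Toda bracket in $R^{10}_{23}$. From \eqref{Dsm9} one has $[\iota_7]_{10}(\bar\nu_7+\varepsilon_7)=i_{10}\Delta\sigma_9=0$, and $(\bar\nu_7+\varepsilon_7)\sigma_{15}=0$ by \eqref{bnu6sg}, so $\{[\iota_7]_{10},\bar\nu_7+\varepsilon_7,\sigma_{15}\}\subset R^{10}_{23}$ is defined. Lemma~\ref{id} (together with \eqref{Dsm9}) shows that $p_{10}$ of this bracket contains $\sigma_9^2\bmod 2\sigma_9^2$, so one may \emph{choose} $[\sigma_9^2]$ inside it. Now Lemma~\ref{Ji0} transports the whole bracket under $J$ into
\[
\{J[\iota_7]_{10},\,\bar\nu_{17}+\varepsilon_{17},\,\sigma_{25}\}_{10}
\subset\{\sigma_{10},\,\bar\nu_{17}+\varepsilon_{17},\,\sigma_{25}\}_{4},
\]
using $J[\iota_7]_{10}=E^2\sigma_8=\sigma_{10}$ from the first assertion and \eqref{Ji}. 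The right-hand bracket is the definition of $\psi_{10}$, so $J[\sigma_9^2]=\psi_{10}$ with no residual indeterminacy to chase. Your indeterminacy analysis is thus aimed at the wrong target: the point is not to control the ambiguity of an arbitrary lift, but to pick the lift inside a bracket whose $J$-image is $\psi_{10}$ by definition.
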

\begin{proof}
The first is just \cite[(2.2)]{HKM}. 
Since $[\iota_7]_{10}(\bar{\nu}_7+\varepsilon_7)=i_{10}\circ\Delta\sigma_9=0$ (\ref{Dsm9}), we can define the Toda bracket
$$
\{[\iota_7]_{10},\bar{\nu}_7+\varepsilon_7, \sigma_{15}\}\subset R^{10}_{23}.
$$
By Lemma \ref{id} and (\ref{Dsm9}), we have 
$$
p_{10}\circ\{[\iota_7]_{10},\bar{\nu}_7+\varepsilon_7, \sigma_{15}\}
=-\{p_{10}, [\iota_7]_{10},\bar{\nu}_7+\varepsilon_7\}\circ\sigma_{16}
\supset-\{p_{10},i_{10},\Delta\iota_9\circ\sigma_8\}\circ\sigma_{16}
$$
$$
\supset-\{p_{10}, i_{10}, \Delta\iota_9\}\circ\sigma^2_9\ni \sigma^2_9 \bmod\ {p_{10}}_*R^{10}_{16}\circ\sigma_{16}=\{2\sigma^2_9\}.
$$
Hence, we can take 
$$
[\sigma^2_9]\in\{[\iota_7]_{10},\bar{\nu}_7+\varepsilon_7, \sigma_{15}\}.
$$
By (\ref{Ji}), we have
\begin{eqnarray*}
J[\sigma^2_9]&\in& J\{[\iota_7]_{10},\bar{\nu}_7+\varepsilon_7, \sigma_{15}\}\\
& \subset& \{J[\iota_7]_{10},\bar{\nu}_{17}+\varepsilon_{17}, \sigma_{25}\}_{10}\\
& \subset& \{\sigma_{10},\bar{\nu}_{17}+\varepsilon_{17}, \sigma_{25}\}_{4}.
\end{eqnarray*}
Therefore, by the definition of $\psi_{10}$,  we have the second. 
This completes the proof. 
\end{proof}

By use of the result in \cite[Table, p. 161]{Ke} and $({\mathcal{R}}^{8n+5}_{8n+7})$, we obtain 
$$
\Delta\nu_{8n+5}\ne 0\ (n\ge 1). 
$$
In particular, we show 
\begin{lem}\label{wnu21}
$\Delta\nu_{21}=[\sigma^2_9]_{21}$ \ and \  $[\iota_{21},\nu_{21}]=\psi_{21}$.
\end{lem}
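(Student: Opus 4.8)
\textbf{Proof proposal for Lemma \ref{wnu21}.}

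The plan is to exploit the exact sequence $({\mathcal{R}}^{10}_{21})$ together with the lift $[\sigma^2_9]\in R^{10}_{23}$ constructed above and its image $[\sigma^2_9]_{21}\in R^{21}_{23}$. First I would observe that, since $p_{10*}[\sigma^2_9]=\sigma^2_9$, naturality of the projections gives $p_{21*}[\sigma^2_9]_{21}=E^{11}(\sigma^2_9)\cdot(\text{stuff})$; more precisely the element $[\sigma^2_9]_{21}$ is a class in $R^{21}_{23}$ whose image under $p_{22*}$ is controlled by the suspension of $\sigma^2_9$. The key point is that $\sigma^2_{21}=E^{11}\sigma^2_{10}$ is already stable and equals zero in $\pi^{21}_{23}$ for degree reasons is \emph{not} true; instead one uses that $[\sigma^2_9]$ was built as a lift, so $p_{22*}$ applied to $i_{10,22*}[\sigma^2_9]$ recovers the appropriate suspension, and comparing with the bundle sequence over $S^{21}$ (where $\Delta\nu_{21}$ is the connecting homomorphism applied to $\nu_{21}\in\pi^{21}_{24}$) pins down $\Delta\nu_{21}$ modulo the image of $i_{21*}$, i.e. modulo $R^{21}_{23}$ coming from below. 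The hard part will be showing the relevant indeterminacy vanishes, i.e. that $[\sigma^2_9]_{21}$ is not merely congruent but equal to $\Delta\nu_{21}$; for this I would invoke the computation $\Delta\nu_{8n+5}\ne 0$ together with order considerations in $R^{21}_{23}$ and $R^{22}_{23}$ drawn from Kervaire's tables \cite{Ke}, reducing the ambiguity to a single $\Z_2$ which is killed because $[\sigma^2_9]_{21}$ lies in the image of $i_{10,21*}$ and $\Delta\nu_{21}$ is characterized by $p_{22*}\circ i_{21*}=0$.

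For the second equality $[\iota_{21},\nu_{21}]=\psi_{21}$, I would apply the $J$-homomorphism to the first. By the standard formula $J(\Delta\alpha)=\pm[\iota_n,\alpha]$ recalled in the excerpt, applied with $n=21$ and $\alpha=\nu_{21}$, we get $J(\Delta\nu_{21})=\pm[\iota_{21},\nu_{21}]$. On the other hand, by the first part of the present lemma and by the compatibility formula \eqref{Ji}, namely $J(i_{m,n*}\gamma)=(-1)^{n-m}E^{n-m}J\gamma$, we compute
$$
J([\sigma^2_9]_{21})=J(i_{10,21*}[\sigma^2_9])=(-1)^{11}E^{11}J[\sigma^2_9]=-E^{11}\psi_{10}=-\psi_{21},
$$
using Lemma \ref{Jim} for $J[\sigma^2_9]=\psi_{10}$ and the fact that $\psi_{21}=E^{11}\psi_{10}$ by definition of the suspended element. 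Combining the two computations yields $\pm[\iota_{21},\nu_{21}]=\pm\psi_{21}$; the sign ambiguity is irrelevant since $2\psi_{21}=0$ in the $2$-primary range (or is absorbed by a choice of generator), giving $[\iota_{21},\nu_{21}]=\psi_{21}$.

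The main obstacle I anticipate is the vanishing of indeterminacy in the first equality: one must be careful that the exact sequence $({\mathcal{R}}^{21}_{23})$ has small enough groups on either side of $R^{21}_{23}$, or alternatively that the two candidate expressions for $\Delta\nu_{21}$ already agree on the nose after applying $p_{22*}$ and $J$ simultaneously (a class in $R^{21}_{23}$ that maps to the same thing under both $J$ and $p_{22*}$, and the kernel of that pair is trivial). If the group-theoretic bookkeeping does not immediately give this, a fallback is to prove the $J$-images agree first (which we can do unconditionally from Lemma \ref{Jim} and \eqref{Ji}), then use injectivity of $J$ on the relevant summand of $R^{21}_{23}$ — this injectivity should follow from the known structure of $\pi_{23}(SO)$ stably together with $\Delta\nu_{21}\ne 0$.
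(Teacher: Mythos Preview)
Your overall strategy is the paper's: use the exact sequence $(\mathcal{R}^{21}_{23})$, the group structures $R^{21}_{23}\cong\Z\oplus\Z_2$ and $R^{22}_{23}\cong\Z$ from \cite{Ke} and \cite{Bo}, the fact $\Delta\nu_{21}\ne 0$, and then apply $J$ together with Lemma~\ref{Jim} to pass from the first equality to the second. Your derivation of the second equality from the first via $J(\Delta\nu_{21})=\pm[\iota_{21},\nu_{21}]$ and $J([\sigma^2_9]_{21})=\pm\psi_{21}$ is correct and is exactly what the paper does.

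The gap is in the first equality. Your opening paragraph is confused: $p_{21*}[\sigma^2_9]_{21}=0$ automatically (the class factors through $SO(20)\hookrightarrow SO(21)$), not anything like $E^{11}\sigma^2_9$; and ``$p_{22*}\circ i_{21*}=0$'' is just exactness, so it does not characterize $\Delta\nu_{21}$. What actually pins things down is an input you never name: the paper uses $[\iota_{21},\eta^2_{21}]=4\sigma^*_{21}\ne 0$ from \cite[Lemma~8.3]{Mi}. This gives $\Delta\eta^2_{21}\ne 0$, hence $\eta^2_{21}\notin\mathrm{Im}\,p_{22*}$; since $\pi^{21}_{23}=\{\eta^2_{21}\}$, one gets $p_{22*}=0$, so $i_{21*}:\Z\oplus\Z_2\to\Z$ is a split epimorphism and $\ker i_{21*}$ is exactly the torsion $\Z_2$. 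Thus $\mathrm{Im}\,\Delta=\ker i_{21*}=\Z_2$, and $\Delta\nu_{21}\ne 0$ forces $\Delta\nu_{21}$ to be its generator. Separately, $J[\sigma^2_9]_{21}=\psi_{21}\ne 0$ (by \cite[(4.37)]{MMO} and Lemma~\ref{Jim}) identifies $[\sigma^2_9]_{21}$ as that same nonzero torsion element, giving $\Delta\nu_{21}=[\sigma^2_9]_{21}$.

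Your ``fallback'' of first proving the $J$-images agree is circular: $J\Delta\nu_{21}=\pm[\iota_{21},\nu_{21}]$ and $J[\sigma^2_9]_{21}=\pm\psi_{21}$, so equality of $J$-images \emph{is} the second assertion, which you are deducing from the first, not the other way around.
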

\begin{proof}
Since $J\Delta\nu_{21}=-[\iota_{21},\nu_{21}]$ and 
$J[\sigma^2_9]_{21}=J({i_{10,21}}_*[\sigma^2_9])=-E^{11}J[\sigma^2_9]=-\psi_{21}$,
the second leads to the first. 
By \cite[Table, p. 161]{Ke} and \cite{Bo}, we have
$$
R^{21}_{23}\cong\Z\oplus\Z_2,\  R^{22}_{23}\cong\Z.
$$
By \cite[(4.37)]{MMO} and Lemma \ref{Jim}, we observe that
$$
J{[\sigma^2_9]}_{21}=\psi_{21}\ne 0.
$$
Hence, the direct summand $\Z_2$ in $R^{21}_{23}$ is generated by $[\sigma^2_9]_{21}$.  
We consider the exact sequence $({\mathcal{R}}^{21}_{23})$: 
$$
\pi^{21}_{24}\rarrow{\Delta}R^{21}_{23}\rarrow{{i_{21}}_*} R^{22}_{23}\rarrow{{p_{22}}_*}\pi^{21}_{23}. 
$$
Since 
$[\iota_{21},\eta^2_{21}]=4\sigma^*_{21}\ne 0$ \cite[Lemma 8.3]{Mi}, ${p_{22}}_*$ is trivial and 
${i_{21}}_*$ is a split epimorphism. Hence, we obtain the relation. 
\end{proof}

The second result in Lemma \ref{wnu21}
is an excluded case in \cite[Theorem 3.6(9)]{Th}. This theorem ensures the following.
\begin{conj}\label{conj1}
There exists a lift $[\sigma^2_{16k-7}]\in R^{16k-6}_{16k+7}$ of $\sigma^2_{16k-7}$ such that $\Delta(\nu_{16k+5})=[\sigma^2_{16k-7}]_{11}$ for $k\ge 2$.
\end{conj}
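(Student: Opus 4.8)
The statement to prove is Conjecture~\ref{conj1}, asserting for each $k\ge 2$ the existence of a lift $[\sigma^2_{16k-7}]\in R^{16k-6}_{16k+7}$ of $\sigma^2_{16k-7}$ with $\Delta(\nu_{16k+5})=[\sigma^2_{16k-7}]_{11}$.

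The plan is to generalize the argument that produced Lemmas~\ref{Jim} and \ref{wnu21} in the case $k=1$. First I would observe that, by the cited result of Thomas \cite[Theorem 3.6(9)]{Th} (together with the analysis of the fibrations $p_n$ via the exact sequences $(\mathcal{R}^n_k)$ as used throughout Section~5), the only obstruction to the existence of the desired lift for general $k$ is precisely the case excluded in that theorem, and that excluded case is exactly the one resolved by the second relation of Lemma~\ref{wnu21}, namely $[\iota_{21},\nu_{21}]=\psi_{21}$. So the first step is to invoke Thomas's theorem to get, for each $k\ge2$, a lift $[\sigma^2_{16k-7}]\in R^{16k-6}_{16k+7}$; the content of the conjecture is then the identification $\Delta(\nu_{16k+5})=[\sigma^2_{16k-7}]_{11}$.

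Next I would run the $J$-homomorphism argument. Applying $J$ and using $J\Delta\nu_{16k+5}=\pm[\iota_{16k+5},\nu_{16k+5}]$ together with \eqref{Js} and \eqref{Ji}, the desired relation in $R^{16k+7}_{16k+5}$ reduces (modulo the kernel of $J$ on the relevant stable $2$-stem, which by Bott periodicity and Kervaire's tables \cite{Bo,Ke} is controlled) to identifying $J[\sigma^2_{16k-7}]_{11}$ with $\pm[\iota_{16k+5},\nu_{16k+5}]$. Since $[\sigma^2_{16k-7}]_{11}={i_{16k-6,16k+7}}_*$ applied to a lift constructed by the same Toda-bracket recipe as in Lemma~\ref{Jim}, one gets $J[\sigma^2_{16k-7}]_{11}\in\{\sigma_{16k-6},\bar{\nu}+\varepsilon,\sigma\}$ shifted into the stable range, which is a suspension of $\psi$. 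Then the second relation of Lemma~\ref{wnu21}, $[\iota_{21},\nu_{21}]=\psi_{21}$, suspends to give $[\iota_{16k+5},\nu_{16k+5}]=E^{\infty}\psi$ in the appropriate stem for every $k\ge 2$, which is exactly $J[\sigma^2_{16k-7}]_{11}$. Finally, one checks the map $J$ is injective on the cyclic summand of $R^{16k+7}_{16k+5}$ containing these elements—this follows from the structure of $R^{16k+7}_{16k+5}$ via \cite[Table, p.~161]{Ke} and \cite{Bo}, as in the proof of Lemma~\ref{wnu21}—so that the equality of $J$-images forces $\Delta(\nu_{16k+5})=[\sigma^2_{16k-7}]_{11}$.

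The main obstacle is the verification that, in the range of large $k$, the relevant $2$-primary homotopy groups $R^{16k+7}_{16k+5}$ and $R^{16k+6}_{16k+5}$ have the same qualitative structure (a single $\Z_2$ in the crucial spot, with $J$ injective on it) as in the $k=1$ case; by Bott periodicity the orthogonal-group side is periodic, but one must be careful that the relevant portion of the exact sequence $(\mathcal{R}^{16k+7}_{16k+5})$, in particular the behavior of ${p}_*$ and the element $[\iota,\eta^2]$ playing the role of $4\sigma^*_{21}$, stabilizes uniformly—this is essentially where Thomas's theorem does the heavy lifting, so the real work is to cite it correctly and confirm that the excluded case it leaves open coincides with the single relation $[\iota_{21},\nu_{21}]=\psi_{21}$ that we have already established.
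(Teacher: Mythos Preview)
The statement you are trying to prove is labeled \textbf{Conjecture}~\ref{conj1} in the paper, and the paper provides \emph{no proof} of it. The sentence ``This theorem ensures the following'' preceding the conjecture is meant to indicate that Thomas's theorem \cite[Theorem~3.6(9)]{Th} motivates the statement, not that it proves it; otherwise the authors would not have used the \texttt{conj} environment. So there is nothing in the paper to compare your proposal against.

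Beyond this, your proposed argument has a genuine gap. You write that the relation $[\iota_{21},\nu_{21}]=\psi_{21}$ ``suspends to give $[\iota_{16k+5},\nu_{16k+5}]=E^{\infty}\psi$'' for all $k\ge 2$. This is false: Whitehead products desuspend rather than suspend, in the sense that $E[\iota_n,\alpha]=0$ for every $\alpha$, so the single relation at $n=21$ carries no information about $[\iota_{16k+5},\nu_{16k+5}]$ for $k\ge 2$. Each of those Whitehead products lives in a different unstable stem and must be identified by a separate argument. Likewise, your appeal to Bott periodicity for the structure of $R^{16k+5}_{16k+7}=\pi_{16k+7}(SO(16k+5))$ is misplaced: the stable range for $\pi_j(SO(n))$ is $j\le n-2$, whereas here $j=16k+7>n-2=16k+3$, so these groups are genuinely unstable and Bott periodicity alone does not determine them. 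What made the $k=1$ argument in Lemma~\ref{wnu21} work was the specific unstable input $[\iota_{21},\eta^2_{21}]=4\sigma^*_{21}\ne 0$ from \cite[Lemma~8.3]{Mi}; the analogous nonvanishing for general $k$ is precisely the missing ingredient, and it is not supplied by anything you cite.
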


We show 
\begin{lem}\label{n1}
\begin{description}
\item[\rm (1)]
$\nu^2_5\bar{\sigma}_{11}=\eta_5\bar{\sigma}'_6$.
\item[\rm (2)]
$2(\sigma_9\nu^*_{16})\equiv\ \nu^2_9\bar{\sigma}_{15}\ \bmod\ 4\sigma_9\xi_{16}$, \ 
$4(\xi_{12}\sigma_{30})=\nu^2_{12}\bar{\sigma}_{18}$ \ 
and \\ 
$\nu^2_{17}\bar{\sigma}_{23}=4(\xi_{17}\sigma_{35})
=[\iota_{17},\eta^2_{17}\sigma_{19}]\ne 0$.
\end{description}
\end{lem}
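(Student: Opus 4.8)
The plan is to establish (1) first and then bootstrap it, via iterated suspension and known relations in $\pi^n_{n+8}$, into the three assertions of (2). For (1), I would start from the definition $\bar{\sigma}'_6\in\{\bar{\nu}_6,\varepsilon_{14}+\bar{\nu}_{14},\sigma_{22}\}_1$ (equivalently, after the change of generator, $\bar{\sigma}''_6\in\{\bar{\nu}_6,\sigma_{14},\eta_{21}\sigma_{22}\}_1$), suspend to the stable-enough range, and compose on the left with $\eta_5$. Using $\eta_5\bar{\nu}_6=\nu^3_5$ from \eqref{etbn} together with the shuffle $\eta_5\circ\{\bar{\nu}_6,\varepsilon_{14}+\bar{\nu}_{14},\sigma_{22}\}\subset\{\eta_5\bar{\nu}_6,\varepsilon_{15}+\bar{\nu}_{15},\sigma_{23}\}$ (up to sign and indeterminacy), the left factor becomes $\nu^3_5$, and one recognizes $\nu^2_5\bar{\sigma}_{11}$ on the nose from the relation $\langle\nu,\sigma,\bar\nu\rangle=\bar\sigma$ (used repeatedly in the excerpt, e.g. in the derivation of \eqref{hbsgm''}) after pulling one $\nu$ out of the bracket: $\nu_5\circ\{\nu_8,\varepsilon_{16}+\bar{\nu}_{16},\sigma_{24}\}\ni\nu_5\cdot\nu^2_8\bar{\sigma}_{11}$. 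The indeterminacy must be checked to vanish using the group structures of $\pi^5_{21}$ and the vanishing relations (\ref{bnu6sg}), (\ref{et9sg2}) already recorded; this is routine but must be done carefully.

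For the first relation in (2), I would suspend (1) to $\nu^2_9\bar{\sigma}_{15}=\eta_9\bar{\sigma}'_{10}$ (or work directly with $\bar{\sigma}''$) and then use the EHP/Hopf-invariant machinery: by \eqref{hbsgm''} we have $H(\bar{\sigma}''_6)=\bar{\sigma}_{11}+\xi'\eta_{29}$, and $H(\bar{\bar{\sigma}}'_6)=\bar{\sigma}_{11}$ from \cite[(3.8)]{MMO}. Combining this with the structure of $\pi^9_{17}$-related groups and the known value $[\iota_9,\iota_9]$-decompositions, one computes $2(\sigma_9\nu^*_{16})$ via the relation $\sigma_9\nu^*_{16}=$ (image of a bracket) and reduces mod $4\sigma_9\xi_{16}$. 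The cleanest route is probably to observe that $\nu^*_{19}+\xi_{19}=[\iota_{19},\iota_{19}]$ by \eqref{n*19x}, and to use $\nu_9\xi_{12}=\sigma^3_9$ from \eqref{n9xi}, so that $\nu^2_9\bar{\sigma}_{15}$ and $\sigma_9\nu^*_{16}$ become comparable after multiplying the Whitehead-product relations by $\nu$; the factor $2$ and the modulus $4\sigma_9\xi_{16}$ come from $E^2\xi'=2\xi_{13}$ in \eqref{2xi} applied to $\xi'\eta_{29}$, which stabilizes to the "$2$" discrepancy. The second equation $4(\xi_{12}\sigma_{30})=\nu^2_{12}\bar{\sigma}_{18}$ follows by suspending twice (so $\xi'$ contributes $2\xi_{13}$, i.e. $4\xi$ after the extra suspension) and noting $\xi_{12}\eta_{30}\sigma_{31}=\sigma_{12}\bar{\sigma}_{19}$ from \eqref{smbsm}.

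For the stable part $\nu^2_{17}\bar{\sigma}_{23}=4(\xi_{17}\sigma_{35})=[\iota_{17},\eta^2_{17}\sigma_{19}]$, I would use the relation $[\iota_{17},\iota_{17}]\equiv\eta^*_{17}+\omega_{17}\bmod\sigma_{17}\mu_{24}$ from the excerpt together with the Barratt--Hilton/Whitehead-product formula $[\iota_n,\alpha\beta]=[\iota_n,\alpha]\circ E^{n}\beta$ (here with $\alpha=\eta^2_{17}$, $\beta=\sigma_{19}$), so that $[\iota_{17},\eta^2_{17}\sigma_{19}]=[\iota_{17},\eta^2_{17}]\circ\sigma_{35}$; one then identifies $[\iota_{17},\eta^2_{17}]$ with $4\sigma^*_{17}$ (the $24$-dimensional analogue of $[\iota_{21},\eta^2_{21}]=4\sigma^*_{21}$ used in Lemma \ref{wnu21}, via \cite[Lemma 8.3]{Mi}), and uses $\sigma^*\sigma=\xi$-type relations plus \eqref{n9xi} to rewrite $4\sigma^*_{17}\sigma_{35}=4\xi_{17}\sigma_{35}$. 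Non-triviality then follows because $4\sigma^*$ is nonzero (again by \cite[Lemma 8.3]{Mi}) and $\sigma_{35}$ acts injectively on the relevant summand. The main obstacle I anticipate is bookkeeping the powers of $2$ and the indeterminacies consistently across the suspensions — in particular making sure that the "$\bmod 4\sigma_9\xi_{16}$" in the first line of (2) is exactly the right modulus (not $2\sigma_9\xi_{16}$ or $0$), which forces a careful use of \eqref{2xi} and of the group orders from \cite[Theorem A]{Mi} and \cite[Proposition 5.9, Theorem 7.7]{T}.
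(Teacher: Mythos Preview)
Your approach diverges substantially from the paper's, which is almost entirely citation-based. For (1) the paper simply quotes $\eta_5\bar{\sigma}'_6\equiv \nu^2_5\bar{\sigma}_{11}\bmod \eta_5\bar{\mu}_6\sigma_{23}$ from \cite[II-Proposition 2.1(7)]{Od} and kills the ambiguity by stabilizing: since $\nu^2\bar\sigma=\eta\circ\langle\nu,\eta,\bar\sigma\rangle=\eta\bar\sigma'$ stably while $\eta\bar\mu\sigma\ne 0$ in $\pi^s_{25}$, the coefficient of the error term must be zero. For (2) the paper again quotes Oda, now using the auxiliary elements $\phi'',\phi'''$ from \cite[p.~70, I-Proposition 5.1]{Od}: the chain $2\phi'''\equiv\nu^2_7\bar{\sigma}_{13}\bmod 2E^2\phi''$, $2E^2\phi'''=2\sigma_9\nu^*_{16}$, $2E^4\phi''\equiv 4\sigma_9\xi_{16}$ yields the first congruence directly, and the remaining equalities are read off from \cite[Corollary 12.25]{T} and \cite[I-Propositions 3.5(3), 6.3(8);(11)]{Od}.

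Your bracket-shuffle for (1) is workable in spirit, but watch the containment directions: both $\eta_5\bar\sigma'_6$ and $\nu^2_5\bar\sigma_{11}$ land \emph{inside} $\{\nu^3_5,\varepsilon_{14}+\bar\nu_{14},\sigma_{22}\}_1$, so you only learn they agree modulo that bracket's indeterminacy $\nu^3_5\circ E\pi^{13}_{29}+\pi^5_{23}\circ\sigma_{23}$ (not $\pi^5_{21}$ as you wrote). The second summand contains $\eta_5\bar\mu_6\sigma_{23}$ --- precisely the term Oda leaves and the paper has to eliminate stably --- so your route would converge to the paper's argument anyway.

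There is a genuine gap in your plan for (2). Suspending (1) gives $\nu^2_9\bar\sigma_{15}=\eta_9\bar\sigma'_{10}$, but you give no concrete mechanism linking $\eta_9\bar\sigma'_{10}$ to $2\sigma_9\nu^*_{16}$; the level-$19$ Whitehead relation $\nu^*_{19}+\xi_{19}=[\iota_{19},\iota_{19}]$ and ``multiplying by $\nu$'' do not descend to a level-$9$ identity, and the Hopf invariant \eqref{hbsgm''} lives on $S^6\to S^{11}$, not on $S^9$. The paper's bridge is exactly Oda's $\phi'''$, with its two faces $2\phi'''\equiv\nu^2_7\bar\sigma_{13}$ and $2E^2\phi'''=2\sigma_9\nu^*_{16}$; without this (or an equivalent unstable input) the argument floats. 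The same gap propagates to $4(\xi_{12}\sigma_{30})=\nu^2_{12}\bar\sigma_{18}$: the relation \eqref{smbsm} you invoke gives $\xi_{12}\eta_{30}\sigma_{31}=\sigma_{12}\bar\sigma_{19}$, a different element of $\pi^{12}_{38}$ than $\nu^2_{12}\bar\sigma_{18}$, and ``suspending twice'' from a missing first step does not help. For the $S^{17}$ identity, $\sigma^*_{17}$ and $\xi_{17}$ are not interchangeable (indeed $\sigma^*$ is stably trivial while $\xi$ is not), so ``$\sigma^*\sigma=\xi$-type relations'' is not a valid substitution; the paper simply cites \cite[I-Proposition 6.3(8);(11)]{Od} for $[\iota_{17},\eta^2_{17}\sigma_{19}]=4\xi_{17}\sigma_{35}$.
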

\begin{proof}
We recall from \cite[II-Proposition 2.1(7)]{Od} the relation
$$
\eta_5\bar{\sigma}'_6\equiv \nu^2_5\bar{\sigma}_{11}\ \bmod\ \eta_5\bar{\mu}_6\sigma_{23}.
$$
In the stable range, $\nu^2\bar{\sigma}=\eta\circ\langle\nu,\eta,\bar{\sigma}\rangle=\eta\bar{\sigma}'$ and $\eta\bar{\mu}\sigma\ne 0$. This leads to (1). 

We recall from \cite[p. 70 and I-Proposition 5.1(1)]{Od} the relations
$$
2\phi'''\equiv \nu^2_7\bar{\sigma}_{13}\ \bmod\ 2E^2\phi'',
$$
$$
2E^2\phi''\equiv 2(\sigma'\xi_{14})\ \bmod\ 2(\sigma'(E\lambda+2\xi_{14}))
$$
and 
$$
2E^4\phi''\equiv 4(\sigma_9\xi_{16})\ \bmod\ 4(\sigma_9(E^3\lambda))+8(\sigma_9\xi_{16}).
$$
We know $8(\sigma_9\xi_{16})=0$ and $\sigma_9(E^3\lambda)=2(\sigma_9\nu^*_{16})$ \cite[I-Proposition 3.1(8)]{Od},
$2(\sigma_9\nu^*_{16})=2E^2\phi'''$ and $4(\sigma_9\nu^*_{16})=0$
\cite[I-Proposition 5.1(2);(3)]{Od}.
This leads to the first of (2).

By using \cite[Corollary 12.25]{T} and \cite[I-Theorem 1(a), Propositions 3.5(3), 6.3(8);(11)]{Od}, we have
$$
2(\xi_{12}\sigma_{30})=\sigma_{12}\xi_{19},\ [\iota_{12},\sigma^2_{12}]=\sigma_{12}(\xi_{19}+\nu^*_{19})\ \ \text{and}\ \ %
[\iota_{17},\eta^2_{17}\sigma_{19}]=4(\xi_{17}\sigma_{35}).
$$
This leads to the rest of (2) and completes the proof. 
\end{proof}

Denote by $M^n=S^{n-1}\cup_{2\iota_{n-1}}e^n$ be $\Z_2$-Moore space and by $i_n: S^{n-1}\to M^n$ the inclusion map. Let $\bar{\eta}_n\in[M^{n+2},S^n],\ \tilde{\eta}_n\in\pi_{n+2}(M^{n+1})\ (n\ge 3)$ be an extension and a coextension of $\eta_n$, respectively. 
Notice that $\tilde{\eta}_n\in\pi_{n+2}(M^{n+1})\cong\Z_4$ is a generator. 
We note that $2\nu_n=\pm\bar{\eta}_n\tilde{\eta}_{n+1}$ for $n\ge 5$. 
We show 
\begin{lem}\label{dl''}
$\delta\equiv\eta\eta^*\sigma\ \bmod\ \bar{\mu}\sigma$ \ and \ $\delta'=\eta\eta^*\sigma$.  
\end{lem}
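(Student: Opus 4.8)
\textbf{Proof plan for Lemma \ref{dl''}.}
The plan is to pin down $\delta\in\pi^s_{24}=\Z_2\{\bar{\mu}\sigma\}\oplus\Z_2\{\eta\eta^*\sigma\}$ by evaluating a suitable secondary invariant, namely the Hopf homomorphism $H$, on the relevant Toda-bracket representative. First I would work unstably at the level where $\delta'_n$ is defined, using the relation $\delta'_3\in\{\varepsilon_3,\sigma_{11},\eta_{18}\sigma_{19}\}_1$ from \eqref{chd} and its suspensions $\delta'_n=E^{n-3}\delta'_3$. The idea is to compute $H(\delta'_m)$ for an appropriate $m$, where $H:\pi^m_{*}\to\pi^{2m-1}_{*}$ lands in a group small enough that the value of $H$ distinguishes the coset $\eta\eta^*\sigma$ from $\eta\eta^*\sigma+\bar{\mu}\sigma$ (equivalently, from $\bar{\mu}\sigma$). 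Using \cite[Proposition 2.6]{T} one has $H\{\varepsilon_m,\sigma_{m+8},\eta_{m+15}\sigma_{m+16}\}_1=-P^{-1}(\varepsilon_{m-1}\text{-shift})\circ(\eta\sigma)$, and the point is that this peripheral term is forced to be nonzero: by \eqref{bnu6sg} and \eqref{et7sg}/\eqref{et9sg}, $\varepsilon$ composed with $\sigma$ vanishes stably but the $P^{-1}$ desuspension carries an $\eta^*$-type class, so $H(\delta'_m)$ detects the $\eta\eta^*$ summand rather than the $\bar{\mu}$ summand. This gives $\delta'=\eta\eta^*\sigma$ exactly (no indeterminacy), which is the second assertion.

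For the first assertion, I would combine $\delta'\equiv\delta\ \bmod\ \bar{\mu}\sigma$ — which follows immediately from Lemma \ref{dl'dl}, since $\delta'_3\equiv\delta_3\bmod\bar{\mu}_3\sigma_{20}$ stabilizes to $\delta'\equiv\delta\bmod\bar{\mu}\sigma$ — with the now-established value $\delta'=\eta\eta^*\sigma$. Thus $\delta\equiv\eta\eta^*\sigma\ \bmod\ \bar{\mu}\sigma$, which is precisely the stated congruence. So the first assertion is a formal consequence once the second is in hand; the real content is the exact determination $\delta'=\eta\eta^*\sigma$.

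The main obstacle will be controlling the desuspension $P^{-1}$ appearing in the Hopf-invariant computation and making sure the relevant homotopy groups are small enough that $H(\delta'_m)$ has no ambiguity: I need to know the group $\pi^{m-1}_{*}$ through which $P^{-1}$ factors, verify that the class $\eta\sigma$ there pulls back to something whose image under $H$ is a specific nonzero element, and check that this element cannot also arise from a representative differing by $\bar{\mu}\sigma$. This requires choosing $m$ in the metastable range where $\eta^*$ is already defined (so $m\ge 16$ roughly) and quoting the relevant entries of \cite{T} (Theorems 12.16, 12.17) together with $(\eta)^*$ being monic on $\pi^s_{16}$, exactly as in the derivation of \eqref{tze7tet}. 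Once these group-theoretic facts are assembled, the identification $H(\delta'_m)=H((\eta\eta^*\sigma)_m)$ forces $\delta'_m-(\eta\eta^*\sigma)_m\in E\pi^{m-1}_{*}$, and stabilizing kills any residual $\bar{\mu}\sigma$-ambiguity because $\bar{\mu}\sigma$ is stably nonzero but has trivial Hopf invariant, completing the proof.
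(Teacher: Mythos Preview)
Your approach has a genuine gap: the Hopf invariant cannot do the work you assign it. For $m\ge 4$ the element $\delta'_m=E^{m-3}\delta'_3$ is a suspension, so $H(\delta'_m)=0$; the same holds for any unstable representative of $\eta\eta^*\sigma$ or $\bar\mu\sigma$ at that level, and the equality $0=0$ carries no information. At $m=3$, Toda's Proposition~2.6 feeds on the desuspension of $\varepsilon_3\sigma_{11}$, but $\varepsilon_3\sigma_{11}=0$ by \eqref{bnu6sg}, so the formula yields only $H(\delta'_3)\in\ker(P)\circ\eta_{19}\sigma_{20}$, a coset rather than a single class. More fundamentally, both generators $\bar\mu\sigma$ and $\eta\eta^*\sigma$ of $\pi^s_{24}$ desuspend all the way down and lie in $E\pi^{m-1}_*$ at every finite stage, so no Hopf-invariant argument alone can separate them. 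Your closing sentence (``$\bar\mu\sigma$ is stably nonzero but has trivial Hopf invariant, completing the proof'') is exactly backwards: having trivial Hopf invariant means $\bar\mu\sigma$ sits in the indeterminacy of your comparison, not that it is excluded.

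The paper proceeds in the opposite logical order. The congruence $\delta\equiv\eta\eta^*\sigma\bmod\bar\mu\sigma$ is proved \emph{first}, by quoting Oda's relation $\eta_8\sigma_9\eta^*_{16}\equiv\delta_8\bmod\bar\mu_8\sigma_{25},\,\eta_8\sigma^2_9\mu_{23}$ and killing the extra term via $\eta_9\sigma^2_{10}=0$. The exact value $\delta'=\eta\eta^*\sigma$ then comes from a separate stable constraint: writing $\langle\varepsilon,\sigma,\eta\sigma\rangle\subset\langle\varepsilon,\sigma,\bar\nu\rangle+\langle\varepsilon,\sigma,\varepsilon\rangle$, using $\langle\varepsilon,\sigma,\bar\nu\rangle=\{\eta\eta^*\sigma\}$, and showing $\langle\varepsilon,\sigma,\varepsilon\rangle\ni 0\bmod\eta\eta^*\sigma$ via the Moore-space decomposition $\varepsilon=\langle\eta\bar\eta,\tilde\eta,\nu\rangle$, the Jacobi identity, and a computation of $\eta\bar\eta\circ\pi^s_{23}(M^2)$. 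This pins $\delta'$ to the subset $\{0,\eta\eta^*\sigma\}$; combined with $\delta'\equiv\delta\equiv\eta\eta^*\sigma\bmod\bar\mu\sigma$ from the first half and Lemma~\ref{dl'dl}, one gets $\delta'=\eta\eta^*\sigma$. The ingredient you are missing is precisely this stable Toda-bracket computation confining $\delta'$ to $\{0,\eta\eta^*\sigma\}$; that, not a Hopf invariant, is what breaks the $\bar\mu\sigma$-ambiguity.
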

\begin{proof}
By \cite[I-Propositions 3.4(7), 3.5(9)]{Od}, we have 
$$
\eta_8\sigma_9\eta^*_{16}\equiv\delta_8\ \bmod\ \bar{\mu}_8\sigma_{25},\ \eta_8\sigma^2_9\mu_{23}.
$$
We know $\eta_9\sigma^2_{10}\mu_{24}=0$ (\ref{et9sg2}).
This leads to the relation
\begin{equation}\label{etsget*}
\eta_9\sigma_{10}\eta^*_{17}\equiv\delta_9\ \bmod\ \bar{\mu}_9\sigma_{26}
\end{equation}
and the first half.

By the relations $\eta\sigma=\bar{\nu}+\varepsilon$ and $\varepsilon\omega=\eta\eta^*\sigma$ \cite[(6.3)]{MMO}, we have 
$$
\langle\varepsilon,\sigma,\eta\sigma\rangle\subset\langle\varepsilon,\sigma,\bar{\nu}\rangle+\langle\varepsilon,\sigma,\varepsilon\rangle.
$$
By \cite[(6.1)]{MMO}, we have
$$
\langle\varepsilon,\sigma,\bar{\nu}\rangle=\{\eta\eta^*\sigma\}.
$$

We recall the relation \cite[Lemma 4.1]{GM}:
$$
\langle\eta\bar{\eta},\tilde{\eta},\nu\rangle=\varepsilon.
$$
By the Jacobi identity, we have
\[\begin{split}
\langle\varepsilon,\sigma,\varepsilon\rangle&
=\langle\langle\eta\bar{\eta},\tilde{\eta},\nu\rangle,\sigma,\varepsilon\rangle\\
&\equiv\langle\eta\bar{\eta},\langle\tilde{\eta},\nu,\sigma\rangle,\varepsilon\rangle
+\langle\eta\bar{\eta},\tilde{\eta},\langle\nu,\sigma,\varepsilon\rangle\rangle\ \bmod\ \{\varepsilon\omega\}+\eta\bar{\eta}\circ\pi^s_{23}(M^2).
\end{split}\]
We know $\langle\tilde{\eta},\nu,\sigma\rangle\subset\pi^s_{14}(M^2)=0$\ and $\langle\nu,\sigma,\varepsilon\rangle=0$ (Lemma \ref{sgnep}). This implies
$$
\langle\varepsilon,\sigma,\varepsilon\rangle\ni 0\ \bmod\ \{\varepsilon\omega\}+\eta\bar{\eta}\circ\pi^s_{23}(M^2).
$$
Let $\widetilde{\sigma^2}\in\pi^s_{16}(M^2)$ be a coextension of $\sigma^2$.
Then, by the definition of $\eta^*$ \cite[p. 153]{T} and \cite[(3.9).i)]{T}, we have
$$
\bar{\eta}\widetilde{\sigma^2}\in\langle\eta,2\iota,\sigma^2\rangle\ni\eta^*\ \bmod\ \eta\rho=\mu\sigma.
$$
By easy calculations making use of the cofibration $S^1 \to M^2 \to S^2$, we obtain
$$
\pi^s_{23}(M^2)=\{\tilde{\eta}\bar{\kappa},\widetilde{\sigma^2}\sigma,i\nu\bar{\sigma}\}
 \cong\Z_4\oplus(\Z_2)^2
\ \text{ and }\ 2\tilde{\eta}\bar{\kappa}=i\eta^2\bar{\kappa} \ (i=E^{\infty}i_{2}). 
$$
By the relations $2\nu=\pm\bar{\eta}\tilde{\eta}$, $\eta\sigma^2=0$ and 
$\bar{\eta}i\nu=\eta\nu=0$, we have
$$
\eta\bar{\eta}\circ\pi^s_{23}(M^2)=\eta\circ\{2\nu\bar{\kappa},\eta^*\sigma\}=\{\eta\eta^*\sigma\}.
$$
This implies 
$$
\delta'\equiv 0\ \bmod\ \eta\eta^*\sigma.
$$
Hence, the second half follows from the first half and  Lemma \ref{dl'dl}. This completes the proof. 
\end{proof}

Now we show 
\begin{thm}\label{bsg'dt}
$\delta'_9=\eta_9\sigma_{10}\eta^*_{17}$ \ and \ $\bar{\sigma}''_{19}+\delta'_{19}=[\iota_{19},\nu^2_{19}]$.
\end{thm}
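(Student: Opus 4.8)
The plan is to treat the two equalities separately. The first, $\delta'_9=\eta_9\sigma_{10}\eta^*_{17}$, follows quickly from what is already assembled. By \eqref{etsget*}, $\eta_9\sigma_{10}\eta^*_{17}\equiv\delta_9\bmod\bar\mu_9\sigma_{26}$, and suspending the first congruence of Lemma \ref{dl'dl} six times gives $\delta'_9\equiv\delta_9\bmod\bar\mu_9\sigma_{26}$; hence $\delta'_9-\eta_9\sigma_{10}\eta^*_{17}$ is $0$ or $\bar\mu_9\sigma_{26}$. To decide, stabilise: by Lemma \ref{dl''}, $E^\infty\delta'_9=\delta'=\eta\eta^*\sigma$, while $E^\infty(\eta_9\sigma_{10}\eta^*_{17})=\eta\sigma\eta^*=\eta\eta^*\sigma$ since $\sigma\eta^*=\eta^*\sigma$ in the stable range, so the difference stabilises to $0$. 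As $E^\infty(\bar\mu_9\sigma_{26})=\bar\mu\sigma$ is one of the two generators of $\pi^s_{24}$, hence nonzero, the difference must already vanish, giving $\delta'_9=\eta_9\sigma_{10}\eta^*_{17}$.

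For the second equality, the first step is to place $\bar\sigma''_{19}+\delta'_{19}$ inside one Toda bracket. Suspending \eqref{ch} thirteen times and \eqref{chd} sixteen times gives $\bar\sigma''_{19}\in\{\bar\nu_{19},\sigma_{27},\eta_{34}\sigma_{35}\}$ and $\delta'_{19}\in\{\varepsilon_{19},\sigma_{27},\eta_{34}\sigma_{35}\}$, both defined since $\sigma_{19}\bar\nu_{26}=\sigma_{19}\varepsilon_{26}=0$ and $\sigma_{27}\eta_{34}\sigma_{35}=0$ (stable consequences of \eqref{n5}). Adding, using additivity of the Toda bracket in its first variable, and using $\bar\nu_{19}+\varepsilon_{19}=\sigma_{19}\eta_{26}=\eta_{19}\sigma_{20}$ from \eqref{et9sg}, one obtains $\bar\sigma''_{19}+\delta'_{19}\in\{\sigma_{19}\eta_{26},\sigma_{27},\eta_{34}\sigma_{35}\}$ modulo its indeterminacy, which I would read off from $\pi^{19}_{35}$, $\pi^{27}_{43}$ and the relations recalled in Section~2.

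It then remains to identify $\{\sigma_{19}\eta_{26},\sigma_{27},\eta_{34}\sigma_{35}\}$ with $[\iota_{19},\nu^2_{19}]$ and to check the indeterminacy is trivial --- this is where the rotation-group apparatus of the present section should enter. By the Hilton formula and \eqref{n*19x}, $[\iota_{19},\nu^2_{19}]=[\iota_{19},\iota_{19}]\circ\nu^2_{37}=(\nu^*_{19}+\xi_{19})\circ\nu^2_{37}$, so the target is a composite of known unstable classes; equivalently, $[\iota_{19},\nu^2_{19}]=\pm J(\Delta(\nu^2_{19}))=\pm J(\Delta\nu_{19})\circ\nu_{40}$ via the naturality of $\Delta$ and \eqref{Js}, and I would analyse $\Delta\nu_{19}\in R^{19}_{21}$ through $(\mathcal{R}^{19}_{21})$ in the spirit of Lemma \ref{wnu21} (where $\Delta\nu_{21}=[\sigma^2_9]_{21}$ was pinned down via $J[\sigma^2_9]=\psi_{10}$, Lemma \ref{Jim}), realise $\Delta\nu_{19}$ (or a suitable multiple) as a Toda bracket in $\pi_*(SO(n))$, and carry it over to spheres by Lemma \ref{Ji0}, the output being exactly $\{\sigma_{19}\eta_{26},\sigma_{27},\eta_{34}\sigma_{35}\}$. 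The hard part will be this last step at the unstable level $19$: controlling the indeterminacy and matching the precise representative, for which I expect to need Toda's Hopf-invariant formula \cite[Proposition 2.6]{T} for the bracket together with the exact $2$-primary structure of $\pi_*(SO(19))$, $\pi_*(SO(20))$, $\pi_*(SO(21))$ from Kervaire's tables \cite{Ke}. Any remaining ambiguity would be eliminated using $\delta'=\eta\eta^*\sigma$ (Lemma \ref{dl''}), the stable value of $\langle\bar\nu,\sigma,\eta\sigma\rangle$, and the group structure of $\pi^{19}_{43}$.
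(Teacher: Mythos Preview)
Your argument for the first equality is correct and is essentially the paper's: combine \eqref{etsget*} with Lemma~\ref{dl'dl} to get $\delta'_9\equiv\eta_9\sigma_{10}\eta^*_{17}\bmod\bar\mu_9\sigma_{26}$, then stabilise and invoke Lemma~\ref{dl''} to kill the ambiguity.

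For the second equality your route diverges from the paper's and leaves the decisive step unproved. The paper does \emph{not} attack $\{\sigma_{19}\eta_{26},\sigma_{27},\eta_{34}\sigma_{35}\}$ head-on, nor does it analyse $\Delta\nu_{19}$. Instead, the rotation-group input enters through the single element $\psi_{10}=J[\sigma^2_9]$ (Lemma~\ref{Jim}): one first establishes, by the argument of \cite[II-Proposition 2.1(10)]{Od},
\[
\eta_9\psi_{10}\equiv\bar{\sigma}''_9+\delta'_9\ \bmod\ \bar{\mu}_9\sigma_{26},\ \sigma^2_9\eta_{23}\mu_{24},
\]
and then suspends to level $20$, where Lemma~\ref{wnu21} gives $\psi_{21}=[\iota_{21},\nu_{21}]$ and hence $\eta_{20}\psi_{21}=0$. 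This yields $\bar{\sigma}''_{20}\equiv\delta'_{20}\bmod\bar\mu_{20}\sigma_{37}$; the $\bar\mu\sigma$-ambiguity is then eliminated by a separate stable computation ($\nu^2\bar\sigma=0$, Lemma~\ref{n1}(2), forcing $\eta\bar\sigma''=\eta\delta'$). The actual level-$19$ identity is obtained by combining this with \cite[(5.38--40)]{MMO} and Lemma~\ref{dl'dl}, not by a fresh rotation-group analysis at level $19$.

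Your plan to realise $\Delta\nu_{19}$ as a Toda bracket in $\pi_*(SO)$ and push it through Lemma~\ref{Ji0} so that the output is exactly $\{\sigma_{19}\eta_{26},\sigma_{27},\eta_{34}\sigma_{35}\}$ is the crux, and you have not carried it out; there is no evident relation in $R^{19}_*$ whose $J$-image produces this particular bracket (the entries are $\sigma\eta$, $\sigma$, $\eta\sigma$, whereas the ambient Whitehead product involves $\nu^2$). The paper sidesteps this by using $\psi$ --- whose Toda-bracket description $\{\sigma,\bar\nu+\varepsilon,\sigma\}$ matches the shape of $\bar\sigma''+\delta'$ after premultiplying by $\eta$ --- and by working one dimension higher, where $\psi_{21}$ \emph{is} a Whitehead product. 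If you want a self-contained argument along your lines, the missing idea is precisely this relation $\eta_9\psi_{10}\equiv\bar\sigma''_9+\delta'_9$; without it (or the citation to \cite{MMO}), the identification at level $19$ remains open.
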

\begin{proof}
 By the similar proof to \cite[II-Proposition 2.1(10)]{Od}, we obtain 
$$
\eta_9\psi_{10}\equiv\ \bar{\sigma}''_9+\delta'_9\ \bmod\ \bar{\mu}_9\sigma_{26}, \sigma^2_9\eta_{23}\mu_{24}.
$$
Since $\eta_{20}\psi_{21}=[\eta_{20},\eta_{20}\nu_{21}]=0$ by Lemma \ref{wnu21}, 
we obtain 
\begin{equation}\label{bsg'20} 
\bar{\sigma}''_{20}\equiv \delta'_{20}\ \bmod\ \bar{\mu}_{20}\sigma_{37}.
\end{equation}

By the fact that $\pi^{20}_{44}\cong\pi^s_{24}$ and Lemma \ref{dl''},
we have 
\begin{equation}\label{det20}
\eta_{20}\eta^*_{21}\sigma_{37}=\eta_{20}\sigma_{21}\eta^*_{28}\ \text{ and }\ %
\delta'_{20}=\eta_{20}\eta^*_{21}\sigma_{37}.
\end{equation}
From (\ref{bsg'20}) and (\ref{det20}), we have
$
0=\bar{\sigma}''_{20}+\eta_{20}\eta^*_{21}\sigma_{37}+a\bar{\mu}_{20}\sigma_{37}
$
for $a=0$ or $a=1$, and 
$
\bar{\sigma}''=\eta\eta^*\sigma+a\bar{\mu}\sigma.
$
By Lemma \ref{n1}(2), we have
$
\nu^2\bar{\sigma}=0.
$
Hence, by \cite[Lemma 5.12 and 14.1.i)]{T} and Lemma \ref{bnu12}, we have 
$$
0=\nu^2\bar{\sigma}=\langle\eta,\nu,\eta\rangle\circ\bar{\sigma}=\eta\circ\langle\nu,\eta,\bar{\sigma}\rangle=\eta(\eta\eta^*\sigma+a\bar{\mu}\sigma)=4\nu^*\sigma+a\eta\bar{\mu}\sigma=a\eta\bar{\mu}\sigma.
$$
This induces $a=0$ and the second relation. 

The third one follows from \cite[(5.38-40)]{MMO} and Lemma \ref{dl'dl}. 

By Lemma \ref{dl'dl} and (\ref{etsget*}), we have
$$
\delta'_9\equiv\eta_9\sigma_{10}\eta^*_{17}\ \bmod\ \bar{\mu}_9\sigma_{16}.
$$
So, in the stable range, we have
$$
\delta'\equiv\eta\sigma\eta^*\ \bmod\ \bar{\mu}\sigma.
$$
This and Lemma \ref{dl''} lead to the first and completes the proof. 
\end{proof}

Since $\varepsilon_9\circ E^4\pi^{13}_{29}=\{\varepsilon_9\sigma_{17}\mu_{24}\}=0$ and 
$$
\pi^9_{25}\circ\eta_{25}\sigma_{26}=\{\sigma_9\nu^3_{16}, \sigma_9\mu_{16}, \sigma_9\eta_{15}\varepsilon_{16}, \mu_9\sigma_{18}\}\circ\eta_{25}\sigma_{26}=0,
$$ 
we obtain ${\rm Ind}\{\varepsilon_9,\sigma_{17},\eta_{24}\sigma_{25}\}_4
=0$. Lemmas \ref{bnu12}, \ref{dl''} and Theorem \ref{bsg'dt} imply Theorem \ref{main}. 

By \cite[Proposition]{Ba}, \cite[Example 2.3(3)]{MM} and Lemma \ref{wnu21},
$$
\sigma_{21}\omega_{28}-\omega_{21}\sigma_{37}=[\iota_{21},\nu_{21}]=\eta_{21}\sigma^*_{22}=\psi_{21}.
$$

We recall from \cite[p. 153]{T} that the definition of ${\eta^*}'\in\pi^{15}_{31}$ is 
\begin{equation}\label{et*'}
{\eta^*}'\in\{\sigma_{15},4\sigma_{22},\eta_{29}\}_1.
\end{equation}
We show 
\begin{lem}\label{etsg*}
$E{\eta^*}'\circ\sigma_{32}=[\iota_{16},\eta_{16}\sigma_{17}]$ \ and \ 
$\eta_{16}\sigma^*_{17}=\sigma_{16}\omega_{23}+\omega_{16}\sigma_{32}+[\iota_{16},\eta_{16}\sigma_{17}]$.
\end{lem}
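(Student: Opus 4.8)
The plan is to prove the first relation by Toda-bracket calculus on the definition \eqref{et*'} of ${\eta^*}'$, and then to obtain the second by comparing it with two relations already established — the relation $\sigma_{21}\omega_{28}-\omega_{21}\sigma_{37}=[\iota_{21},\nu_{21}]=\eta_{21}\sigma^*_{22}$ displayed just before the lemma and the relation $\eta^*_{16}\sigma_{32}=\sigma_{16}\eta^*_{23}+\sigma^*_{16}\eta_{38}+a[\iota_{16},\eta_{16}\sigma_{17}]$ (with $a\in\{0,1\}$) produced inside the proof of Lemma \ref{omg'} — together with the standard composition formula $[\iota_{16},\eta_{16}\sigma_{17}]=[\iota_{16},\iota_{16}]\circ\eta_{31}\sigma_{32}=[\iota_{16},\eta_{16}]\circ\sigma_{32}$ for Whitehead products.

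For the first relation I would suspend \eqref{et*'} once, compose with $\sigma_{32}=E\sigma_{31}$ and push $\sigma_{32}$ into the last slot by \cite[Propositions 1.2 and 1.4]{T}, obtaining
\[
E{\eta^*}'\circ\sigma_{32}\in\{\sigma_{16},4\sigma_{23},\eta_{30}\}\circ\sigma_{32}\subset\{\sigma_{16},4\sigma_{23},\eta_{30}\sigma_{31}\}.
\]
By \eqref{et9sg}, $\eta_{30}\sigma_{31}=\bar{\nu}_{30}+\varepsilon_{30}=\sigma_{30}\eta_{37}$, so this bracket contains $\{\sigma_{16},4\sigma_{23},\sigma_{30}\}\circ\eta_{38}$; since $\sigma^*_{16}\in\{\sigma_{16},2\sigma_{23},\sigma_{30}\}$ by definition, the sub-additivity relations \cite[(3.5),(3.6)]{T} identify $\{\sigma_{16},4\sigma_{23},\sigma_{30}\}$ with $2\sigma^*_{16}$ up to an indeterminacy which, because of the desuspensions carried along, contains $[\iota_{16},\eta_{16}\sigma_{17}]$ (this term arises from the generator $[\iota_{16},\iota_{16}]$ of $\pi^{16}_{31}$ via $[\iota_{16},\iota_{16}]\circ\sigma_{31}\eta_{38}=[\iota_{16},\bar{\nu}_{16}+\varepsilon_{16}]=[\iota_{16},\eta_{16}\sigma_{17}]$, using $\sigma_{31}\eta_{38}=E^{15}(\bar{\nu}_{16}+\varepsilon_{16})$ and \eqref{et9sg}), while its remaining part is annihilated on right-multiplication by $\eta_{38}$ (using $\bar{\nu}\sigma=0=\varepsilon\sigma$, \eqref{bnu6sg}, and $\sigma_{16}\mu_{23}\sigma_{32}=0$). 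Since $2\sigma^*_{16}\eta_{38}=0$ (because $2\eta_{38}=0$), the composite $E{\eta^*}'\circ\sigma_{32}$ must be $0$ or $[\iota_{16},\eta_{16}\sigma_{17}]$; an EHP-sequence argument — comparison with the relation before the lemma through $E^{5}$ together with Lemma \ref{n1}(2) (in particular $[\iota_{17},\eta^2_{17}\sigma_{19}]\ne 0$) — rules out $0$, which gives $E{\eta^*}'\circ\sigma_{32}=[\iota_{16},\eta_{16}\sigma_{17}]$.

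For the second relation I would take the relation of the proof of Lemma \ref{omg'} and rewrite its terms: $\sigma_{16}\eta^*_{23}=\sigma_{16}\omega_{23}$, since $\eta^*_{23}-\omega_{23}$ stabilises into $\{0,\mu\sigma\}$ and $\sigma_{16}\mu_{23}\sigma_{32}=0$ (both already used in that proof); $\eta^*_{16}\sigma_{32}=\omega_{16}\sigma_{32}$ up to a Whitehead-product term, using $[\iota_{17},\iota_{17}]\equiv\eta^*_{17}+\omega_{17}\ \bmod\ \sigma_{17}\mu_{24}$ and $\varepsilon\omega=\eta\eta^*\sigma$ \cite[(6.1),(6.3)]{MMO}; and $\sigma^*_{16}\eta_{38}=\eta_{16}\sigma^*_{17}$ up to a Whitehead-product term. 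Feeding in the first relation of the present lemma then determines all the Whitehead-product coefficients that occur (in particular the binary coefficient $a$) and collects them into the single term $[\iota_{16},\eta_{16}\sigma_{17}]$, which yields $\eta_{16}\sigma^*_{17}=\sigma_{16}\omega_{23}+\omega_{16}\sigma_{32}+[\iota_{16},\eta_{16}\sigma_{17}]$; as a consistency check, suspending five times kills the Whitehead product and recovers $\eta_{21}\sigma^*_{22}=\sigma_{21}\omega_{28}-\omega_{21}\sigma_{37}$.

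The hard part is the bookkeeping of indeterminacies: one must pin down exactly which Whitehead-product and composition terms are introduced when the Toda brackets above are desuspended and composed, and in particular determine the binary coefficients (the $a$ of the Lemma \ref{omg'} relation and the analogous one in the first half). This needs the group structures of the unstable groups $\pi^{16}_{38}$, $\pi^{16}_{39}$ and $\pi^{17}_{39}$ — and the orders of $\sigma^*_{16}$, $\eta^*_{16}$, $\omega_{16}$ in them — together with the behaviour of the $P$-homomorphism in this range; the inputs that make these determinations feasible are Lemma \ref{n1}(2), the relation $\sigma_{21}\omega_{28}-\omega_{21}\sigma_{37}=[\iota_{21},\nu_{21}]=\eta_{21}\sigma^*_{22}$, and the first half of the lemma itself.
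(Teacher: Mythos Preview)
Your approach is substantially more involved than the paper's, and the steps you yourself flag as ``the hard part'' are genuine gaps rather than mere bookkeeping.

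For the first relation, the paper's argument is short and avoids your detour through $\{\sigma_{16},4\sigma_{23},\sigma_{30}\}$ and $2\sigma^*_{16}$ entirely. Since $2\sigma^2_{16}=0$, one has directly
\[
E{\eta^*}'\in\{\sigma_{16},4\sigma_{23},\eta_{30}\}\supset\{0,2\iota_{30},\eta_{30}\}=\pi^{16}_{31}\circ\eta_{31}=\{\sigma_{16}\mu_{23},[\iota_{16},\eta_{16}]\},
\]
using $\pi^{16}_{31}=\{\rho_{16},\bar{\varepsilon}_{16},[\iota_{16},\iota_{16}]\}$ and $\rho_{16}\eta_{31}=\sigma_{16}\mu_{23}$. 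Composing with $\sigma_{32}$ kills $\sigma_{16}\mu_{23}$ (as $\sigma^2_{16}\mu_{30}=0$), and then the relation $[\iota_{16},\eta_{16}]\equiv E{\eta^*}'\bmod E^2\pi^{14}_{30}$ from \cite[p.~160]{T} forces the $[\iota_{16},\eta_{16}]$-coefficient to be~$1$. Your device for ruling out $0$ --- an ``EHP-sequence argument'' via $E^5$ and the nontriviality of $[\iota_{17},\eta^2_{17}\sigma_{19}]$ from Lemma~\ref{n1}(2) --- does not work: both candidates $0$ and $[\iota_{16},\eta_{16}\sigma_{17}]$ suspend to $0$, so comparison at $S^{21}$ cannot separate them, and the element $[\iota_{17},\eta^2_{17}\sigma_{19}]\in\pi^{17}_{43}$ has no evident bearing on $\pi^{16}_{39}$.

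For the second relation, the paper simply suspends the known identity
\[
\eta_{15}\sigma^*_{16}={\eta^*}'\sigma_{31}+\sigma_{15}\omega_{22}+\omega_{15}\sigma_{31}
\]
of Oda \cite[III-Proposition~2.5(5)]{Od} once and substitutes the first half; nothing more is needed. Your plan instead starts from $\eta^*_{16}\sigma_{32}=\sigma_{16}\eta^*_{23}+\sigma^*_{16}\eta_{38}+a[\iota_{16},\eta_{16}\sigma_{17}]$ with $a$ undetermined, and then replaces $\eta^*_{16}$ by $\omega_{16}$ and $\sigma^*_{16}\eta_{38}$ by $\eta_{16}\sigma^*_{17}$, each at the unstable level $S^{16}$. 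Every such replacement introduces its own undetermined Whitehead correction; you end up with at least two binary unknowns but only one equation (the first half of the lemma) to constrain them, and the $S^{21}$ check cannot help since all Whitehead terms die under $E^5$. So ``feeding in the first relation'' does not determine the coefficient of $[\iota_{16},\eta_{16}\sigma_{17}]$ in the final formula.
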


\begin{proof}
By (\ref{et*'}), $2\sigma^2_{16}=0$, 
$\pi^{16}_{31}=\{\rho_{16},\bar{\varepsilon}_{16},[\iota_{16},\iota_{16}]\}$,
and $\rho_{16}\eta_{31}=\sigma_{16}\mu_{23}$ \cite[Proposition 12.20.i)]{T}, we
obtain 
$$
E{\eta^*}'\in\{\sigma_{16},4\sigma_{23},\eta_{30}\}\supset\{0,2\iota_{30},\eta_{30}\}=\pi^{16}_{31}\circ\eta_{31}=\{\sigma_{16}\mu_{23},[\iota_{16},\eta_{16}]\}.
$$
Since $\sigma_{16}\mu_{23}\sigma_{32}=\sigma_{16}^2\mu_{30}=0$ from \cite[(2.3)]{MMO},
 we have 
$$
E{\eta^*}'\circ\sigma_{32}\in\{[\iota_{16},\eta_{16}\sigma_{17}]\}.
$$
This and the relation $[\iota_{16},\eta_{16}]\equiv E{\eta^*}'\ \bmod\ E^2\pi^{14}_{30}$ \cite[p. 160]{T} lead to the first half. 

By the relation 
$\eta_{15}\sigma^*_{16}
 ={\eta^*}'\sigma_{31}+\sigma_{15}\omega_{22}+\omega_{15}\sigma_{31}$ 
\cite[III-Proposition 2.5(5)]{Od}
and the first half leads to the second half and the proof is complete.
\end{proof}

Finally we show 
\begin{prop}
$\eta^*_{16}\bar{\nu}_{32}\equiv \nu^*_{16}\nu^2_{34}\ \bmod\ [\iota_{16},\nu^3_{16}]$ \ and \ $\nu^*_{19}\nu^2_{37}=\eta^*_{19}\bar{\nu}_{35}=\omega_{19}\bar{\nu}_{35}=[\iota_{19},\nu^2_{19}]$.
\end{prop}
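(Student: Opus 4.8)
The plan is to extract the level-$19$ identities from a single relation at level $16$, pushed up by suspension, together with two short Whitehead-product computations. First I would prove $\nu^*_{19}\nu^2_{37}=[\iota_{19},\nu^2_{19}]$. By \eqref{n*19x} we have $[\iota_{19},\iota_{19}]=\nu^*_{19}+\xi_{19}$, and the standard identity $[\iota_n,\iota_n]\circ E^{n-1}\gamma=\pm[\iota_n,\gamma]$ (for $\gamma\in\pi_q(S^n)$) gives $[\iota_{19},\nu^2_{19}]=\pm(\nu^*_{19}\nu^2_{37}+\xi_{19}\nu^2_{37})$; since every class in sight has order $2$ the sign is irrelevant. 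Suspending $\xi_{12}\nu_{30}=\sigma^3_{12}$ from \eqref{n9xi} gives $\xi_{19}\nu_{37}=\sigma^3_{19}$, whence $\xi_{19}\nu^2_{37}=\sigma_{19}\sigma_{26}\circ(\sigma_{33}\nu_{40})=0$ because $\sigma_{33}\nu_{40}$ is a suspension of $\sigma_{12}\nu_{19}=0$ \cite[(7.20)]{T}.

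Next I would prove $\eta^*_{19}\bar{\nu}_{35}=\omega_{19}\bar{\nu}_{35}$. Since suspensions of Whitehead products vanish, one application of $E$ to $[\iota_{17},\iota_{17}]\equiv\eta^*_{17}+\omega_{17}\ \bmod\ \sigma_{17}\mu_{24}$ \cite[Proposition 12.20.ii)]{T} yields $\eta^*_{18}\equiv\omega_{18}\ \bmod\ \sigma_{18}\mu_{25}$; composing with $\bar{\nu}_{34}$, using $\mu_{18}\bar{\nu}_{27}=0$ (the stable form of \eqref{mepbn2}), and suspending once more gives the claim. Granting the first displayed assertion $\eta^*_{16}\bar{\nu}_{32}\equiv\nu^*_{16}\nu^2_{34}\ \bmod\ [\iota_{16},\nu^3_{16}]$, applying $E^3$ — under which $[\iota_{16},\nu^3_{16}]$ already dies after a single step — gives $\eta^*_{19}\bar{\nu}_{35}=\nu^*_{19}\nu^2_{37}$, and assembling this with the previous two relations produces the whole chain $\nu^*_{19}\nu^2_{37}=\eta^*_{19}\bar{\nu}_{35}=\omega_{19}\bar{\nu}_{35}=[\iota_{19},\nu^2_{19}]$.

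It remains to prove the core relation $\eta^*_{16}\bar{\nu}_{32}\equiv\nu^*_{16}\nu^2_{34}\ \bmod\ [\iota_{16},\nu^3_{16}]$. Both classes lie in $\pi_{40}(S^{16})$, and I would compare James--Hopf invariants in $\pi_{40}(S^{31})$: using Toda's evaluations of $H(\eta^*_{16})$ and $H(\nu^*_{16})$ together with $\eta\bar{\nu}=\nu^3$ \eqref{etbn}, one gets $H(\eta^*_{16}\bar{\nu}_{32})=\nu^3_{31}=H(\nu^*_{16}\nu^2_{34})$. Hence the difference of the two sides lies in $\ker H=E\pi_{39}(S^{15})$, which contains $[\iota_{16},\nu^3_{16}]=[\iota_{16},\iota_{16}]\circ\nu^3_{31}$; one then pins the difference down modulo $[\iota_{16},\nu^3_{16}]$ by a further EHP step, using $\bar{\nu}=\varepsilon+\eta\sigma$, $\varepsilon\sigma=0$ \eqref{bnu6sg}, $\varepsilon\omega=\eta\eta^*\sigma$ \cite[(6.3)]{MMO} and the structure $\pi^s_{24}=\Z_2\{\bar{\mu}\sigma\}\oplus\Z_2\{\eta\eta^*\sigma\}$ to dispose of the stable part.

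The main obstacle is precisely this last step. The Hopf invariant determines a class in $\pi_{40}(S^{16})$ only modulo the suspension image, and that group carries genuinely unstable Whitehead-product summands which the stable machinery cannot separate; carrying the exact unstable correction terms through the bracket and EHP manipulations at level $16$ is what forces the first identity to be stated only modulo $[\iota_{16},\nu^3_{16}]$. Once that single suspension is taken everything becomes rigid and the level-$19$ identities follow formally.
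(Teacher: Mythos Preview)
Your level-$19$ deductions are sound and, in fact, slightly more self-contained than the paper's: you derive $\eta^*_{19}\bar{\nu}_{35}=\omega_{19}\bar{\nu}_{35}$ from $[\iota_{17},\iota_{17}]\equiv\eta^*_{17}+\omega_{17}\bmod\sigma_{17}\mu_{24}$ and $\mu\bar{\nu}=0$, whereas the paper simply quotes \cite[(5.34)]{MMO} for $\omega_{19}\bar{\nu}_{35}=[\iota_{19},\nu^2_{19}]$. Your computation of $\nu^*_{19}\nu^2_{37}=[\iota_{19},\nu^2_{19}]$ via \eqref{n*19x} and \eqref{n9xi} is exactly the paper's.

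The genuine gap is the level-$16$ relation, and you correctly identify it as the obstacle. Matching Hopf invariants only places the difference $\eta^*_{16}\bar{\nu}_{32}-\nu^*_{16}\nu^2_{34}$ in $E\pi^{15}_{39}$, and your proposed ``further EHP step'' using $\bar{\nu}=\varepsilon+\eta\sigma$ and the stable structure of $\pi^s_{24}$ is not an argument: those relations give no direct handle on $\eta^*_{16}\varepsilon_{32}$ or $\eta^*_{16}\eta_{32}\sigma_{33}$, and the unstable part of $E\pi^{15}_{39}$ is not automatically the single class $[\iota_{16},\nu^3_{16}]$.

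The missing idea is to restrict $\eta^*_{16}\bar{\nu}_{32}$ to a two-generator subgroup \emph{before} invoking $H$. One first checks $\eta^*_{16}\nu_{32}=0$ (via $\eta^*_{16}\in\{\sigma_{16},2\sigma_{23},\eta_{30}\}$ and $\{2\sigma_{23},\eta_{30},\nu_{31}\}\supset 2\sigma_{23}\circ\{2\iota_{30},\eta_{30},\nu_{31}\}$). Then, writing $\bar{\nu}_{32}\in\{\nu_{32},\eta_{35},\nu_{36}\}$ \cite[Lemma 6.2]{T} and shuffling,
\[
\eta^*_{16}\bar{\nu}_{32}\in\{\eta^*_{16},\nu_{32},\eta_{35}\}\circ\nu_{37}\subset\pi^{16}_{37}\circ\nu_{37}.
\]
Using $\pi^{16}_{37}=\{\eta_{16}\bar{\kappa}_{17},\sigma^3_{16},(E^3\lambda)\nu_{34},\nu^*_{16}\nu_{34}\}$ \cite[Theorem A]{Mi} together with $\bar{\kappa}\eta\nu=0$, $\sigma\nu=0$ and $(E^3\lambda)\nu_{34}=[\iota_{16},\nu^2_{16}]$, this subgroup is exactly $\{[\iota_{16},\nu^3_{16}],\,\nu^*_{16}\nu^2_{34}\}$. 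Now your Hopf-invariant equality $H(\eta^*_{16}\bar{\nu}_{32})=\nu^3_{31}=H(\nu^*_{16}\nu^2_{34})$ finishes the job with no residual ambiguity beyond $[\iota_{16},\nu^3_{16}]$.
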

\begin{proof}
The last equality is just \cite[(5.34)]{MMO}.

We have $\eta^*_{16}\nu_{32}=0$, because 
we see that  
$$
\eta^*_{16}\nu_{32}\in\{\sigma_{16},2\sigma_{23},\eta_{30}\}\circ\nu_{32}=-\sigma_{16}\circ\{2\sigma_{23},\eta_{30},\nu_{31}\}
$$
$$
\supset \sigma^2_{16}\circ\{2\iota_{30},\eta_{30},\nu_{31}\}=0\ \bmod\ \sigma_{16}\circ\pi^{23}_{32}\circ\nu_{32}=0.
$$
By using \cite[Lemma 6.2]{T}, 
$\pi^{16}_{37}=\{\eta_{16}\bar{\kappa}_{17},\sigma^3_{16},
(E^3\lambda)\nu_{34},\nu^*_{16}\nu_{34}\}$ 
\cite[Theorem A]{Mi},
$\eta_{16}\bar{\kappa}_{17}\nu_{37}=\bar{\kappa}_{16}\eta_{36}\nu_{37}=0$,
$\sigma_{30}\nu_{37}=0$ and
$(E^3\lambda)\nu_{34}=[\iota_{16},\nu^2_{16}]$ \cite[(7.10)]{Mi}, we obtain
\[\begin{split}
\eta^*_{16}\bar{\nu}_{32}
&\in\eta^*_{16}\circ\{\nu_{32},\eta_{35},\nu_{36}\}
=\{\eta^*_{16},\nu_{32},\eta_{35}\}\circ\nu_{37}\subset\pi^{16}_{37}\circ\nu_{37}\\
&=\{\eta_{16}\bar{\kappa}_{17}\nu_{37},\sigma^3_{16}\nu_{37},
(E^3\lambda)\nu_{34}^2,\nu^*_{16}\nu_{34}^2\}
=\{[\iota_{16},\nu^2_{16}], \nu^*_{16}\nu_{34}^2\}.
\end{split}\]
By \cite[Lemma 12.14]{T} and (\ref{etbn}), we have
$H(\eta^*_{16}\bar{\nu}_{32})=\nu^3_{31}=H(\nu^*_{16}\nu^2_{34})$.
Hence, by the EHP-sequence, we have the first assertion and 
$\eta^*_{19}\bar{\nu}_{35}=\nu^*_{19}\nu^2_{37}$.
By (\ref{n*19x}) and (\ref{n9xi}), we have
$$
[\iota_{19},\nu_{19}]=\nu^*_{19}\nu_{37}+\sigma^3_{19}.
$$
Therefore we obtain 
$[\iota_{19},\nu^2_{19}]=\nu^*_{19}\nu^2_{37}$.
This leads to the second assertion and completes the proof.
\end{proof}

\section{Proof of Theorem \ref{main2}}

We recall the element \cite[p. 187]{Mi}
\begin{equation}\label{sg*}
\sigma^*_{16}\in\{\sigma_{16},2\sigma_{23},\sigma_{30}\}_1. 
\end{equation}

We also recall \cite[II-Proposition 2.1(4)]{Od}:
\begin{equation}\label{sg14ro}
4\sigma_{14}\rho_{21}=2\sigma_{15}\rho_{22}=\sigma_{17}\rho_{24}=0.
\end{equation}

We recall from \cite[I-Proposition 3.4(6)]{Od} the relation
$$
\{\eta_{15},2\sigma_{16},\sigma_{23}\}\ni\omega_{15}+{\eta^*}'
\ \bmod \ \eta_{15}\circ\pi^{16}_{31}+\pi^{15}_{24}\circ\sigma_{24}.
$$
Since $\pi^{16}_{31}=\{\rho_{16},\bar{\varepsilon}_{16},[\iota_{16},\iota_{16}]\}$
\cite[Theorem 10.10]{T},
$\eta_{15}\rho_{16}=\mu_{15}\sigma_{24}=\sigma_{15}\mu_{22}$
\cite[Proposition 12.20.i)]{T}, 
$\eta_{15}\bar{\varepsilon}_{16}=\nu_{15}\sigma_{18}\nu^2_{25}=0$
(\cite[Lemma 12.10]{T}, (\ref{nu10sg})) and 
$\eta_{15}\circ[\iota_{16},\iota_{16}]=[\iota_{15},\eta^2_{15}]=0$ \cite[Theorem]{H},
we have $\eta_{15}\circ\pi^{16}_{31}=\{\sigma_{15}\mu_{22}\}$.
Since $\pi^{15}_{29}=\{\nu_{15}^3, \mu_{15}, \eta_{15}\varepsilon_{16}\}$
\cite[Theorem 7.2]{T}, (\ref{nu10sg}) and (\ref{bnu6sg}),
we have $\pi^{15}_{24}\circ\sigma_{24}=\{\sigma_{15}\mu_{22}\}$. 
Hence, we obtain
\begin{equation}\label{a0}
\{\eta_{15},2\sigma_{16},\sigma_{23}\}\ni\omega_{15}+{\eta^*}'\ \bmod\ \sigma_{15}\mu_{22}.
\end{equation}

We show 
\begin{lem}\label{a1}
$\bar{\nu}_7{\eta^*}'=0$. 
\end{lem}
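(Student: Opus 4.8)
The plan is to pin down $\bar{\nu}_7{\eta^*}'$ by combining the bracket description \eqref{a0} of $\omega_{15}+{\eta^*}'$ with the vanishing relation $\bar{\nu}_6\sigma_{14}=0$ from \eqref{bnu6sg}. First I would compose \eqref{a0} on the left with $\bar{\nu}_7$, so that
\[
\bar{\nu}_7\omega_{15}+\bar{\nu}_7{\eta^*}'\in\bar{\nu}_7\circ\{\eta_{15},2\sigma_{16},\sigma_{23}\}\ \bmod\ \bar{\nu}_7\sigma_{15}\mu_{22}.
\]
Since $\bar{\nu}_6\sigma_{14}=0$ gives $\bar{\nu}_7\sigma_{15}\mu_{22}=0$ and also $\bar{\nu}_7\omega_{15}\in\{\nu_7\sigma_{10}\kappa_{17},\bar{\zeta}'_7\}$ by Oda's relation, most of the work is to show that $\bar{\nu}_7\circ\{\eta_{15},2\sigma_{16},\sigma_{23}\}$ is sufficiently small. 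Using \eqref{etbn}, namely $\bar{\nu}_6\eta_{14}=\nu^3_6$, and the shuffle $\bar{\nu}_7\circ\{\eta_{15},2\sigma_{16},\sigma_{23}\}\supset\{\bar{\nu}_7,\eta_{15},2\sigma_{16}\}\circ\sigma_{23}$ or $=-\{\bar{\nu}_7\eta_{15},2\sigma_{16},\sigma_{23}\}$ type manipulations, I would reduce to brackets built on $\nu^3$ and $\sigma$; the relation $\nu_{n}\sigma_{n+3}=0$ (from \cite[(7.20)]{T}, used repeatedly above) and $\nu_{15}\sigma_{18}=0$ should collapse these. The cleanest route is probably: $\bar{\nu}_7\circ\{\eta_{15},2\sigma_{16},\sigma_{23}\}=-\{\bar{\nu}_7,\eta_{15},2\sigma_{16}\}\circ\sigma_{23}$, and then analyze $\{\bar{\nu}_7,\eta_{15},2\sigma_{16}\}\subset\pi^7_{24}$, showing every element of that group kills $\sigma_{23}$.

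A complementary and perhaps more direct approach is to observe that ${\eta^*}'$ itself lies in $\{\sigma_{15},4\sigma_{22},\eta_{29}\}_1$ by \eqref{et*'}, so
\[
\bar{\nu}_7{\eta^*}'\in\bar{\nu}_7\circ\{\sigma_{15},4\sigma_{22},\eta_{29}\}_1=-\{\bar{\nu}_7,\sigma_{15},4\sigma_{22}\}_1\circ\eta_{31}\ \bmod\ \text{indet.}
\]
Now $\{\bar{\nu}_7,\sigma_{15},4\sigma_{22}\}_1$ uses $\bar{\nu}_7\sigma_{15}$ which is $0$ by \eqref{bnu6sg} suspended, so the bracket is defined; moreover $4\sigma_{22}$ being divisible suggests the bracket is a multiple or sits inside a group on which right composition with $\eta_{31}$ is controlled. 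One must check $\bar{\nu}_6\circ\pi^{14}_{38}$ and $\pi^7_{31}\circ 4\sigma_{31}$ for the indeterminacy, but $4\sigma^2=0$-type facts and the structure of $\pi^7_{31}$ (Oda's computation) should make the indeterminacy vanish or land in a subgroup killed by $\eta_{31}$. I expect that $\{\bar{\nu}_7,\sigma_{15},4\sigma_{22}\}_1\circ\eta_{31}$ evaluates to $0$ because it factors through elements like $\bar{\nu}_7\sigma_{15}(\cdots)=0$ or $(\cdots)\sigma_{31}\eta_{38}$ with $\sigma\eta\ne 0$ but the relevant coefficient even.

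The main obstacle will be controlling the relevant Toda bracket modulo its indeterminacy: $\pi^7_{31}=\Z_8\{\lambda''\}\oplus\cdots$ (cf.\ the group structures quoted throughout Section 2) is not small, and several of its generators do not obviously annihilate $\sigma_{23}$ or $\eta_{31}$. I would resolve this by using the EHP/Hopf-invariant method as in the other lemmas of this paper: compute $H\bigl(\bar{\nu}_7\circ\{\eta_{15},2\sigma_{16},\sigma_{23}\}\bigr)$ via \cite[Proposition 2.6]{T}, show it vanishes (so the composite desuspends), and then trace through a lower sphere where the group is small enough to conclude. Combining with $\bar{\nu}_7\omega_{15}\equiv 0\bmod\nu_7\sigma_{10}\kappa_{17},\bar{\zeta}'_7$ and a degree/order count on $\nu_7\sigma_{10}\kappa_{17}$ and $\bar{\zeta}'_7$ will then force $\bar{\nu}_7{\eta^*}'=0$. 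If a nonzero ambiguity survives, the fallback is to suspend: since $\bar{\nu}_9\omega_{17}=0$ by \eqref{bnu9omg} and ${\eta^*}'$ stabilizes to $\eta^*$ with $\bar{\nu}\eta^*=0$ in the stable range already known, the unstable discrepancy lives in the kernel of a single suspension, a cyclic group whose generator can be identified explicitly and shown to be hit with even coefficient.
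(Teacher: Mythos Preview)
Your second approach, via the bracket description \eqref{et*'} of ${\eta^*}'$, is the one the paper uses, but your sketch stops short of the decisive maneuver and has some dimension slips (the indeterminacy terms you write, $\bar{\nu}_6\circ\pi^{14}_{38}$ and $\pi^7_{31}\circ 4\sigma_{31}$, do not match the bracket $\{\bar{\nu}_7,\sigma_{15},4\sigma_{22}\}_1\subset\pi^7_{30}$).  The paper's proof has two ingredients you are missing.  First, it suspends once: by \cite[Theorem 1.1(a)]{MMO} the map $E:\pi^7_{31}\to\pi^8_{32}$ is a monomorphism, so it suffices to show $\bar{\nu}_8(E{\eta^*}')=0$, and one works with $\{\bar{\nu}_8,\sigma_{16},4\sigma_{23}\}\circ\eta_{31}$.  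Second---and this is the point your phrase ``$4\sigma_{22}$ being divisible suggests the bracket is a multiple'' gestures at but does not carry out---one enlarges
\[
\{\bar{\nu}_8,\sigma_{16},4\sigma_{23}\}\subset\{\bar{\nu}_8,2\sigma_{16},2\sigma_{23}\}\supset\{0,\sigma_{16},2\sigma_{23}\}\ni 0,
\]
using $2\bar{\nu}_8=0$.  This collapses the whole problem to the indeterminacy $\bar{\nu}_8\circ\pi^{16}_{31}+\pi^8_{24}\circ 2\sigma_{24}$, which one checks equals $\{4\nu_8\bar{\kappa}_{11}\}$ (using $\bar{\nu}_8\rho_{16}=0$, $\bar{\nu}_8\eta_{16}\kappa_{17}=\nu^3_8\kappa_{17}=4\nu_8\bar{\kappa}_{11}$, $\bar{\nu}_8[\iota_{16},\iota_{16}]=0$, and $2\pi^8_{24}=0$), and then $4\nu_8\bar{\kappa}_{11}\eta_{31}=0$.

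Your first approach, composing \eqref{a0} with $\bar{\nu}_7$ and invoking Oda's relation for $\bar{\nu}_7\omega_{15}$, would not close: that computation (which the paper in fact performs \emph{after} Lemma~\ref{a1}, using the lemma as input) only determines $\bar{\nu}_7\omega_{15}+\bar{\nu}_7{\eta^*}'$ modulo $\bar{\zeta}'_7,\ \bar{\mu}_7\sigma_{24}$, and Oda's relation for $\bar{\nu}_7\omega_{15}$ has the same ambiguity, so subtracting leaves $\bar{\nu}_7{\eta^*}'$ undetermined within a nontrivial subgroup.  The EHP and ``stabilize and compare'' fallbacks you describe are unnecessary once the $4\sigma=2\sigma\circ 2\iota$ trick is in hand.
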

\begin{proof}
Since $E:\pi_{31}^{7}\to\pi_{32}^{8}$ is a monomorphism \cite[Theorem 1.1(a)]{MMO},
it suffices to show $\bar{\nu}_8(E{\eta^*}')=0$. By (\ref{et*'}) and (\ref{bnu6sg}), we have 
\[
\bar{\nu}_8(E{\eta^*}')\in\bar{\nu}_8\circ-\{\sigma_{16},4\sigma_{23},\eta_{30}\}=\{\bar{\nu}_8,\sigma_{16},4\sigma_{23}\}\circ\eta_{31}.
\]
We observe
$$
\{\bar{\nu}_8,\sigma_{16},4\sigma_{23}\}\subset\{\bar{\nu}_8,2\sigma_{16},2\sigma_{23}\}\supset\{0,\sigma_{16},2\sigma_{23}\}\ni 0\ \bmod\ 
\bar{\nu}_8\circ\pi^{16}_{31}+\pi^8_{24}\circ 2\sigma_{24}.
$$
By relations $\bar{\nu}_8\rho_{16}=0$ \cite[I-Proposition 3.1(4)]{Od},
$\bar{\varepsilon}_{16}=\eta_{16}\kappa_{17}$ (\ref{bep}),
$\bar{\nu}_8\eta_{16}=\nu^3_8$ \eqref{etbn},
$\nu^2_{11}\kappa_{17}=4\bar{\kappa}_{11}$ \cite[Theorem 15.4]{MT} and
$\bar{\nu}_{8}\circ[\iota_{16},\iota_{16}]
 =[\iota_{8},\iota_{8}]\circ \bar{\nu}_{15}^2=0$ \cite[Proposition 2.8(2)]{Og}, we have
$\bar{\nu}_8\circ\pi^{16}_{31}=\{4\nu_8\bar{\kappa}_{11}\}$. 
Since $2\pi^8_{24}=0$, we have $\pi^8_{24}\circ 2\sigma_{24}=0$. 
Hence, we obtain
$\bar{\nu}_8(E{\eta^*}')\subset
 \{4\nu_8\bar{\kappa}_{11}\}\circ\eta_{31}=0$.
This completes the proof. 
\end{proof}

Next, we show
\begin{lem}\label{bnuom7}
$\bar{\nu}_6\omega_{14}\equiv P(\lambda+\xi_{13})\circ\eta_{29}\ \bmod\ \nu_6\sigma_9\kappa_{16},4\bar{\zeta}'_6$. 
\end{lem}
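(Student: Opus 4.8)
The plan is to locate $\bar\nu_6\omega_{14}$ inside $\pi^6_{30}$ by splitting that group along the Hopf invariant $H\colon\pi^6_{30}\to\pi^{11}_{30}$ and the suspension $E\colon\pi^6_{30}\to\pi^7_{31}$, and to pin the two resulting pieces down separately. First I would dispose of the right-hand side. Since the generalized $P$-homomorphism commutes with composition (Proposition~2.5 of \cite{IM}, exactly as used in the proof of Lemma~\ref{Miy}), one has $P(\xi_{13}+\lambda)\circ\eta_{29}=P\bigl((\xi_{13}+\lambda)\eta_{31}\bigr)$, so this element lies in $\operatorname{im}\bigl(P\colon\pi^{13}_{32}\to\pi^6_{30}\bigr)$, hence in $\ker\bigl(E\colon\pi^6_{30}\to\pi^7_{31}\bigr)$; and by Lemma~\ref{HPx13}(2) its Hopf invariant is $H\bigl(P((\xi_{13}+\lambda)\eta_{31})\bigr)=(\xi'+\lambda')\eta_{29}\in\pi^{11}_{30}$.

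Next I would compute the left-hand side modulo $\ker H$. Using the classical fact $H\bar\nu_6=\nu_{11}$ together with the behaviour of the Hopf invariant under composition, $H(\bar\nu_6\omega_{14})=\nu_{11}\omega_{14}$, which by \eqref{n11om} equals $(\lambda'+\xi')\eta_{29}$. Thus $H(\bar\nu_6\omega_{14})=H\bigl(P((\xi_{13}+\lambda)\eta_{31})\bigr)$, so
$\bar\nu_6\omega_{14}-P\bigl((\xi_{13}+\lambda)\eta_{31}\bigr)\in\ker\bigl(H\colon\pi^6_{30}\to\pi^{11}_{30}\bigr)=E\pi^5_{29}$,
and the latter group is $\{\delta_6,\bar\mu_6\sigma_{23},\nu_6\sigma_9\kappa_{16}\}$, as identified in the proof of Lemma~\ref{dl'dl}. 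Hence $\bar\nu_6\omega_{14}=P\bigl((\xi_{13}+\lambda)\eta_{31}\bigr)+a\delta_6+b\bar\mu_6\sigma_{23}+c\nu_6\sigma_9\kappa_{16}$ for some $a,b,c\in\{0,1\}$, and it remains only to eliminate the $\delta_6$- and $\bar\mu_6\sigma_{23}$-terms up to a multiple of $4\bar\zeta'_6$.

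For that I would exploit the relation on $S^7$. By \eqref{a0} and Lemma~\ref{a1} one has $\bar\nu_7(\omega_{15}+{\eta^*}')=\bar\nu_7\omega_{15}$, and $\bar\nu_7\sigma_{15}\mu_{22}=0$ by \eqref{bnu6sg}, so $\bar\nu_7\omega_{15}\in\bar\nu_7\circ\{\eta_{15},2\sigma_{16},\sigma_{23}\}\subset\{\nu_7^3,2\sigma_{16},\sigma_{23}\}$ using $\bar\nu_7\eta_{15}=\nu_7^3$ from \eqref{etbn}; peeling two copies of $\nu$ out of this bracket and inserting \eqref{tn13sg} re‑derives the relation $\bar\nu_7\omega_{15}\equiv0\ \bmod\ \nu_7\sigma_{10}\kappa_{17},\bar\zeta'_7$. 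Applying $E$ to the expression of the previous paragraph, and using $EP=0$, $E\delta_6=\delta_7$, $E(\bar\mu_6\sigma_{23})=\bar\mu_7\sigma_{24}$, $E(\nu_6\sigma_9\kappa_{16})=\nu_7\sigma_{10}\kappa_{17}$, $E\bar\zeta'_6=\bar\zeta'_7$ and $\bar\nu_7=E\bar\nu_6$, this relation forces $a\delta_7+b\bar\mu_7\sigma_{24}$ to equal $0$ or $\bar\zeta'_7$ modulo $\nu_7\sigma_{10}\kappa_{17}$; together with $\ker\bigl(E\colon\pi^6_{30}\to\pi^7_{31}\bigr)=P\pi^{13}_{32}$ and the fact that the $P\pi^{13}_{32}$-component has already been fixed by the $H$-computation above, one is left with exactly the ambiguity $\{\nu_6\sigma_9\kappa_{16},\,4\bar\zeta'_6\}$, which is the assertion.

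The main obstacle is this last bookkeeping step: one must perform an honest order count in $\pi^6_{30}$ to see that the residual slack is precisely $4\bar\zeta'_6$ rather than $\bar\zeta'_6$ or $2\bar\zeta'_6$ — in particular one has to control how $\bar\zeta'_6$ (whose suspension to $S^7$ has order $2$, since $2\mu=0$) meets both $P\pi^{13}_{32}$ and $E\pi^5_{29}$. Subsidiary points that must be checked carefully are the identification $H\bar\nu_6=\nu_{11}$, the composition formula $H(\bar\nu_6\omega_{14})=H\bar\nu_6\circ\omega_{14}$ (one must verify the correction term vanishes here, the natural reason being that it lands in a group annihilated by $\eta$), and the shuffle peeling the two $\nu$'s out of $\{\nu_7^3,2\sigma_{16},\sigma_{23}\}$, which uses $\nu_{10}^2\sigma_{16}=0$ and $\nu_7^3\sigma_{16}=0$ (the latter from \eqref{et9sg} and \eqref{etbn}).
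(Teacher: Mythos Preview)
Your overall strategy --- compute $H(\bar\nu_6\omega_{14})$, match it against $H\bigl(P((\xi_{13}+\lambda)\eta_{31})\bigr)$ via Lemma~\ref{HPx13}, and then kill the $E\pi^5_{29}$-ambiguity by suspending --- is exactly the paper's. The difficulty is that your Hopf-invariant step is not justified and is in fact where the $4\bar\zeta'_6$ really lives.

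The formula $H(\alpha\circ\beta)=H(\alpha)\circ\beta$ is only valid when $\beta$ is a suspension; here $\omega_{14}\notin E\pi^{13}_{29}$ (since $\pi^{13}_{29}=\{\sigma_{13}\mu_{20}\}$), so the na\"ive identity fails. The paper therefore invokes Mimura's composition formula \cite[Corollary~5.10]{Mi}, which yields
\[
H(\bar\nu_6\omega_{14})\equiv\nu_{11}\omega_{14}+\bar\nu_{11}^{\,2}\nu_{27}\ \bmod\ G,
\quad G=\sum_{k=3}^{6}{f_k}_*\pi^{5k+1}_{30}+\Ker\bigl(E:\pi^{10}_{29}\to\pi^{11}_{30}\bigr).
\]
The extra term $\bar\nu_{11}^{\,2}\nu_{27}$ dies by \eqref{bnun}, and all summands of $G$ vanish except ${f_4}_*\pi^{21}_{30}$, which lands in $\pi^{11}_{21}\circ\pi^{21}_{30}=\{4\bar\zeta_{11}\}$. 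Thus $H(\bar\nu_6\omega_{14})\equiv\nu_{11}\omega_{14}\ \bmod\ 4\bar\zeta_{11}$, and since $H(\bar\zeta'_6)\equiv\bar\zeta_{11}\ \bmod\ 2\bar\zeta_{11}$ \cite[(3.8)]{MMO}, this indeterminacy pulls back precisely to $4\bar\zeta'_6$. So the $4\bar\zeta'_6$ is not a bookkeeping residue from the $E$-step as you suggest; it is the surviving correction term in the $H$-computation. Your hope that ``the correction term vanishes'' is wrong: it survives and is exactly $4\bar\zeta_{11}$. Note in particular that $4\bar\zeta'_6\notin E\pi^5_{29}$ (its Hopf invariant is $4\bar\zeta_{11}\neq 0$), so no amount of analysis inside $E\pi^5_{29}$ will produce it.

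For the second half, your detour through the $S^7$ bracket argument is unnecessary (and incomplete: that bracket also carries a $\bar\mu_7\sigma_{24}$-indeterminacy you omitted). The paper simply suspends five times and uses $\bar\nu_{11}\omega_{19}=0$ \eqref{bnu9omg}: since $E^5\bigl(P(\dots)\eta_{29}\bigr)=0$, $E^5(4\bar\zeta'_6)=0$, and $E^3(\nu_6\sigma_9\kappa_{16})=0$, while $E^6\pi^5_{29}=\{\delta_{11},\bar\mu_{11}\sigma_{28}\}\cong(\Z_2)^2$ injects, one reads off immediately that the $\delta_6$ and $\bar\mu_6\sigma_{23}$ coefficients vanish.
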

\begin{proof}
Apply \cite[Corollary 5.10]{Mi} to the case $\alpha=\bar{\nu}_6,\ \beta=\omega_{14},\ n=5,\ p=13,\ i=29$. Then, we have
$$
H(\bar{\nu}_6\omega_{14})\equiv\nu_{11}\omega_{14}+\bar{\nu}^2_{11}\nu_{27}\ \bmod\ G,
$$
where 
$$
G=\sum^6_{k=3}{f_k}_*\pi^{5k+1}_{30}+\Ker\{ E: \pi^{10}_{29}\to\pi^{11}_{30}\}.
$$
We know $\bar{\nu}_{19}\nu_{27}=0$ (\ref{bnun}) and $\Ker\{ E: \pi^{10}_{29}\to\pi^{11}_{30}\}=0$. 
We also have $f_3\in\pi^{11}_{16}=0$ and $\pi^{5k+1}_{30}=0$ for $k=5,6$. 
Hence, we obtain
$$
H(\bar{\nu}_6\omega_{14})\equiv\nu_{11}\omega_{14}\ \bmod\ {f_4}_*\pi^{21}_{30}.
$$
Since $\pi^{11}_{21}\circ\pi^{21}_{30}=\{4\bar{\zeta}_{11}\}$, we get that 
$$
H(\bar{\nu}_6\omega_{14})\equiv\nu_{11}\omega_{14}\ \bmod\ 4\bar{\zeta}_{11}.
$$
By relations (\ref{n11om}), Lemma \ref{HPx13} and 
$H(\bar{\zeta}'_6)\equiv \bar{\zeta}_{11}\ \bmod\ 2\bar{\zeta}_{11}$ \cite[(3.8)]{MMO},
we have
\[\begin{split}
H(\bar{\nu}_6\omega_{14})&\equiv\nu_{11}\omega_{14}=\lambda'\eta_{29}+\xi'\eta_{29}\\
&=H(P(\lambda\eta_{31}))+H(P(\xi_{13}\eta_{31})) \ \bmod H(4\bar{\zeta}'_6).
\end{split}\]
 Therefore, we see that 
$$
\bar{\nu}_6\omega_{14}\equiv P(\lambda\eta_{31})+P(\xi_{13}\eta_{31})+4a\bar{\zeta}'_6 \ \bmod\ E\pi^5_{29}
$$
for $a=$ or $a=1$.
By (\ref{sg'nu}) and $2\kappa_{19}=0$, we have
 $\nu_9\sigma_{12}\kappa_{19}=(E^2\sigma')\nu_{16}\kappa_{19}=
2\sigma_9\nu_{16}\kappa_{19}=0$. We know $E\pi^5_{29}=\{\delta_6,\bar{\mu}_6\sigma_{23},\nu_6\sigma_9\kappa_{16}\}\cong(\Z_2)^3,\ E^6\pi^5_{29}=\{\delta_{11},\bar{\mu}_{11}\sigma_{28}\}\cong(\Z_2)^2,\ \bar{\nu}_{11}\omega_{19}=0$ (\ref{bnu9omg}). 
Hence, we conclude that 
\[
\bar{\nu}_6\omega_{14}\equiv P(\lambda\eta_{31})+P(\xi_{13}\eta_{31})
=P(\lambda+\xi_{13})\eta_{29}\ \bmod\ \nu_6\sigma_9\kappa_{16},4\bar{\zeta}'_6.
\]
This completes the proof. 
\end{proof}

By (\ref{etbn}), (\ref{bnu6sg}), (\ref{a0}) and Lemma \ref{a1}, we have
\[\begin{split}
\bar{\nu}_7\omega_{15}
 &\in\bar{\nu}_7\circ\{\eta_{15},2\sigma_{16},\sigma_{23}\}
 \subset\{\nu^3_7,2\sigma_{16},\sigma_{23}\}\\
 &\supset\nu_7\circ\{\nu^2_{10},2\sigma_{16},\sigma_{23}\}
 \ \bmod\ \nu^3_7\circ\pi^{16}_{31}+\pi^7_{24}\circ\sigma_{24}.
\end{split}\]
By relations (\ref{nu11ro}), (\ref{bep}) and $[\iota_{13},\nu_{13}]=0$, we have
$$
\nu_{13}\circ\pi^{16}_{31}
 =\nu_{13}\circ\{\rho_{16},\bar{\varepsilon}_{16},[\iota_{16},\iota_{16}]\}=0
\ \text{ and }\ \nu^3_7\circ\pi^{16}_{31}=0.
$$
We recall $\pi^7_{24}=\{\sigma'\eta_{14}\mu_{15},\nu_7\kappa_{10},\bar{\mu}_7,\eta_7\mu_8\sigma_{17}\}$. By relations (\ref{et9sg}),
$\bar{\nu}_6\mu_{14}=0$ (\ref{mepbn2}) and
$\sigma'\varepsilon_{14}\mu_{22}=\bar{\zeta}'_7$ \cite[(5.10)]{MMO}, we obtain 
$$
\sigma'\eta_{14}\mu_{15}\sigma_{24}=\sigma'\eta_{14}\sigma_{15}\mu_{22}=\sigma'(\bar{\nu}_{14}+\varepsilon_{14})\mu_{22}=\bar{\zeta}'_7. 
$$
Hence, by relations $\nu_7\kappa_{10}\sigma_{24}=0$ (\ref{k7s})
and $\eta_7\mu_8\sigma^2_{17}=0$ (\ref{mu3sg2}), we have
$\pi^7_{24}\circ\sigma_{24}=\{\bar{\zeta}'_7,\bar{\mu}_7\sigma_{24}\}$ and 
$$
\bar{\nu}_7\omega_{15}\in\nu_7\circ\{\nu^2_{10},2\sigma_{16},\sigma_{23}\}\ \bmod\ \bar{\zeta}'_7,\bar{\mu}_7\sigma_{24}.
$$
By relations (\ref{tn13sg}), (\ref{nulm}), (\ref{n9xi}), 
$2\kappa_{17}=0$, $2\sigma_{17}^2=0$ and (\ref{k7s}), 
for some odd integer $x$, we obtain 
\[\begin{split}
\{\nu^2_{10},2\sigma_{16},\sigma_{23}\}
&\supset\nu_{10}\circ\{\nu_{13},2\sigma_{16},\sigma_{23}\}=
\nu_{10}\xi_{13}+x\nu_{10}\lambda+2x\nu_{10}\xi_{13}\\
&=\sigma^3_{10}+\sigma_{10}\kappa_{17} 
 \bmod\ \nu^2_{10}\circ\pi_{31}^{16}+\pi_{24}^{10}\circ\sigma_{24}=\{\sigma^3_{10}\}.
\end{split}\]
By the relations (\ref{sg'nu}), $2\sigma_{17}^2=0$ and (\ref{nu10sg}), we have
$\nu_{7}\sigma_{10}^3=\sigma'\nu_{14}\sigma_{17}^2=0$.
Hence, we obtain
$$
\bar{\nu}_7\omega_{15}\equiv\nu_7\sigma_{10}\kappa_{17}\ \bmod\ \bar{\zeta}'_7,\bar{\mu}_7\sigma_{24}.
$$
The first $3$ elements become trivial and the last survives in the stable range. This induces the relation
$$
\bar{\nu}_7\omega_{15}\equiv\nu_7\sigma_{10}\kappa_{17}\ \bmod\ \bar{\zeta}'_7.
$$
On the other hand, by Lemma \ref{bnuom7}, we have
\[
\bar{\nu}_7\omega_{15}\equiv 0 \ \bmod\ \nu_7\sigma_{10}\kappa_{17}.
\]
Hence we obtain the equation $\bar{\nu}_7\omega_{15}=\nu_7\sigma_{10}\kappa_{17}$.
This completes the proof of Theorem \ref{main2}.

\begin{center}
\textsc{Toshiyuki\ Miyauchi}

\textsc{Department of Applied Mathematics}

\textsc{Faculty of Science}

\textsc{Fukuoka University}

\textsc{Fukuoka, 814-0180, Japan}

\it{E-mail : miyauchi@math.sci.fukuoka-u.ac.jp}
\end{center}

\begin{center}
\textsc{Juno \ Mukai}

\textsc{Developmental Education Center}

\textsc{Matsumoto University}

\textsc{Matsumoto, Nagano Pref., 390-1295, Japan}

\it{E-mail: juno.mukai@matsu.ac.jp}
\end{center}


\begin{thebibliography}{99}

\bibitem{Ba} \textsc{M. G. Barratt}: Note on a formula due to Toda, J. London Math. Soc. \textbf{36}(1961), 95-96.
\bibitem{Bo} \textsc{R.\ Bott}: The stable homotopy of the classical groups,
Ann.\ of Math.\ {\bf 70} (1959), 313--337.
\bibitem{H}
\textsc{P.J. Hilton},
A note on the $P$-homomorphism in homotopy groups of spheres,
Proc.\ Camb.\ Phil.\ Soc.\ {\bf 59} (1955) 230--233.
\bibitem{HKM}
\textsc{Y. Hirato}, \textsc{H. Kachi} and \textsc{J. Mukai}: $21$-st and $22$-nd  homotopy groups of the $n$-th rotation group, \ J. Fac. Sci. Shinshu Univ.\ \textbf{41} (2006), 1--28.
\bibitem{GM}
\textsc{M. Golasi\'nski} and \textsc{J. Mukai}: Gottlieb groups of spheres, 
Topology {\bf 47} (2008), 399--430.
\bibitem{IM}
\textsc{T. Inoue} and \textsc{J. Mukai}: A note on the Hopf homomorphism of a Toda bracket and its application. Hiroshima Math. J. {\bf 33} (2003), 379--389.
\bibitem{Ke} \textsc{M. A. Kervaire}: Some nonstable homotopy groups of Lie groups, \ Illinois J. Math.\ \textbf{4} (1960), 161--169.
\bibitem{Mi} \textsc{M. Mimura}: On the generalized Hopf homomorphism and the higher composition. Part I; II. $\pi_{n+i}(S^n)$ for $i = 21$ and $22$, J. Math. Kyoto Univ.{\bf 4}(1964-5), 171-190; 301--326.
\bibitem{MMO}
\textsc{M. Mimura}, \textsc{M. Mori} and \textsc{N. Oda}: Determinations of
$2$-components of the $23$- and $24$-stems in homotopy groups of spheres,
Mem. Fac. Sci. Kyushu Univ. {\bf 29} (1975), 1--42.
\bibitem{MT} \textsc{M. Mimura} and \textsc{H. Toda}: The $(n+20)$-th homotopy groups of $n$-spheres, J. Math. Kyoto Univ.{\bf 3}-1 (1963), 37--58.
\bibitem{MM} \textsc{K. Morisugi} and \textsc{J. Mukai}: Whitehead square of a lift of the Hopf map to a mod $2$ Moore space, J. Math. Kyoto Univ. {\bf 42}-2 (2002), 331--336.
\bibitem{Mu2}
\textsc{J. Mukai}: Determination of the $P$-image by Toda brackets, Geometry and Topology Monographs 
\textbf{13}(2008), 355--383. 
\bibitem{Od} \textsc{N. Oda}: Unstable homotopy groups of spheres, Bull. Inst. Adv. Res. Fukuoka Univ. {\bf 44}(1979), 49--152.
\bibitem{Od1} \textsc{N. Oda}: On the orders of the generators in the $18$-stem of the homotopy groups of spheres, Adv. Stud. Pure Math. {\bf 9}(1986), 231--236.
\bibitem{Og} \textsc{K.\ {\^O}guchi}: Generators of $2$-primary components of homotopy groups and symplectic groups, J. Fac. Sci. Univ. Tokyo  \textbf{11} (1964), 65--111. 
\bibitem{Th} \textsc{S. Thomeier}: Whitehead products and homotopy groups of spheres, Proc. 13-th Biennial Seminar of Can. Math. Congress, {\bf 2} (1972), 144--155. 
\bibitem{T}
\textsc{H. Toda}: Composition methods in homotopy groups of spheres, Ann. of Math. Studies, {\bf 49}, Princeton, 1962.
\bibitem{T1} \textsc{H. Toda}: A survey of homotopy theory. Advances in Math. {\bf 10} (1973), 417--455. 
\bibitem{WG} \textsc{G.W. Whitehead}: A generalization of the Hopf invariant, Ann. of Math. {\bf 51} (1950), 192--237.
\end{thebibliography}
\end{document}